\newcommand{\Si}{ {\Sigma} }
\newcommand{\ep}{ {\epsilon} }
\newcommand{\bC}{ {\mathbb{C}} }
\newcommand{\bL}{\mathbb{L}}
\newcommand{\bP}{\mathbb{P}}
\newcommand{\bQ}{\mathbb{Q}}
\newcommand{\bR}{\mathbb{R}}
\newcommand{\bZ}{\mathbb{Z}}
\newcommand{\cA}{\mathcal{A}}
\newcommand{\cF}{\mathcal{F}}
\newcommand{\cM}{\mathcal{M}}
\newcommand{\cO}{\mathcal{O}}
\newcommand{\Aut}{\mathrm{Aut}}
\newcommand{\Hom}{\mathrm{Hom}}
\newcommand{\Ext}{\mathrm{Ext}}
\newcommand{\ev}{\mathrm{ev}}
\newcommand{\val}{ {\mathrm{val}} }
\newcommand{\vir}{ {\mathrm{vir}} }
\newcommand{\pt}{\mathrm{pt}}
\newcommand{\et}{ {\mathrm{\acute{e}t}}}
\newcommand{\bh}{\mathbf{h}}
\newcommand{\bw}{\mathbf{w}}
\newcommand{\bfD}{\mathbf{D}}
\newcommand{\bfR}{\mathbf{R}}
\newcommand{\fg}{\mathfrak{g}}
\newcommand{\su}{\mathsf{u}}
\newcommand{\sw}{\mathsf{w}}
\newcommand{\hv}{\hat{v}}
\newcommand{\hl}{\hat{l}}
\newcommand{\hSi}{\hat{\Sigma}}
\newcommand{\hsi}{\hat{\sigma}}
\newcommand{\hbeta}{\hat{\beta}}
\newcommand{\hpsi}{\hat{\psi}}
\newcommand{\htau}{\hat{\tau}}
\newcommand{\hvphi}{\hat{\varphi}}
\newcommand{\hA}{\hat{A}}
\newcommand{\hC}{\hat{C}}
\newcommand{\hbL}{\hat{\bL}}
\newcommand{\tSi}{\widetilde{\Sigma}}
\newcommand{\trho}{\widetilde{\rho}}
\newcommand{\tbL}{\widetilde{\bL}}
\newcommand{\tC}{\widetilde{C}}
\newcommand{\tD}{\widetilde{D}}
\newcommand{\tM}{\widetilde{M}}
\newcommand{\tN}{\widetilde{N}}
\newcommand{\tT}{\widetilde{T}}
\newcommand{\tX}{\widetilde{X}}
\newcommand{\tZ}{\widetilde{Z}}
\newcommand{\tb}{\widetilde{b}}
\newcommand{\tl}{\widetilde{l}}
\newcommand{\tu}{\widetilde{u}}
\newcommand{\tsi}{\widetilde{\sigma}}
\newcommand{\tbeta}{\widetilde{\beta}}
\newcommand{\tpsi}{\widetilde{\psi}}
\newcommand{\tvphi}{\widetilde{\varphi}}
\newcommand{\ttau}{\widetilde{\tau}}
\newcommand{\tbh}{\widetilde{\bh}}
\newcommand{\tbw}{\widetilde{\bw}}
\newcommand{\vGa}{\vec{\Gamma}}
\newcommand{\vd}{\vec{d}}
\newcommand{\vf}{\vec{f}}
\newcommand{\vs}{\vec{s}}
\newcommand{\Mbar}{\overline{\cM}}
\newcommand{\lra}{\longrightarrow}
\newcommand{\inner}[1]{\langle  #1 \rangle}
\newtheorem{dummy}{dummy}[section]
\newtheorem{lemma}[dummy]{Lemma}
\newtheorem{theorem}[dummy]{Theorem}
\newtheorem{proposition}[dummy]{Proposition}
\newtheorem{remark}[dummy]{Remark}
\newtheorem{definition}[dummy]{Definition}
\newtheorem{example}[dummy]{Example}
\newtheorem{assumption}[dummy]{Assumption}
\begin{document}
\title{Open/closed correspondence via relative/local correspondence}

\author{Chiu-Chu Melissa Liu}
\address{Chiu-Chu Melissa Liu, Department of Mathematics, Columbia University, 2990 Broadway, New York, NY 10027}
\email{ccliu@math.columbia.edu}

\author{Song Yu}
\address{Song Yu, Department of Mathematics, Columbia University, 2990 Broadway, New York, NY 10027}
\email{syu@math.columbia.edu}

\dedicatory{Dedicated to the memory of Professor Bumsig Kim}

\begin{abstract}
We establish a correspondence between the disk invariants of a smooth toric Calabi-Yau 3-fold $X$ with boundary condition specified by a framed Aganagic-Vafa outer brane $(L, f)$ and the genus-zero closed Gromov-Witten invariants of a smooth toric Calabi-Yau 4-fold $\tX$, proving the open/closed correspondence proposed by Mayr and developed by Lerche-Mayr. Our correspondence is the composition of two intermediate steps:

\begin{itemize}
    \item First, a correspondence between the disk invariants of $(X,L,f)$ and the genus-zero maximally-tangent relative Gromov-Witten invariants of a relative Calabi-Yau 3-fold $(Y,D)$, where $Y$ is a toric partial compactification of $X$ by adding a smooth toric divisor $D$. This correspondence can be obtained as a consequence of the topological vertex (Li-Liu-Liu-Zhou) and Fang-Liu where the all-genus open Gromov-Witten invariants of $(X,L,f)$ are identified with the formal relative Gromov-Witten invariants of the formal completion of $(Y,D)$ along the toric 1-skeleton. Here, we present a proof without resorting to formal geometry.

    \item Second, a correspondence in genus zero between the maximally-tangent relative Gromov-Witten invariants of $(Y,D)$ and the closed Gromov-Witten invariants of the toric Calabi-Yau 4-fold $\tX = \cO_Y(-D)$. This can be viewed as an instantiation of the log-local principle of van Garrel-Graber-Ruddat in the non-compact setting.
\end{itemize}
\end{abstract}
\maketitle

\section{Introduction}\label{sect:Intro}

The \emph{open/closed correspondence}, proposed by Mayr \cite{Mayr01} as a class of open/closed string dualities and developed by Lerche-Mayr \cite{LM01}, is a conjectural relation between the topological amplitudes at genus zero of an open string geometry on a Calabi-Yau 3-fold $X$ relative to a Lagrangian $L$ and a closed string geometry on a corresponding Calabi-Yau 4-fold $\tX$. From the viewpoint of Gromov-Witten theory, this predicts a correspondence between the disk invariants of $(X,L)$, which are virtual counts of stable maps from genus-zero Riemann surfaces with one boundary component to $(X,L)$, and the genus-zero closed Gromov-Witten invariants of $\tX$, which are virtual counts of stable maps from genus-zero closed Riemann surfaces to $\tX$. This numerical correspondence is conjectured to situate in a correspondence between the generating functions for the two types of invariants under an identification of the K\"ahler parameters of $\tX$ with the open/closed moduli parameters of $X$. Moreover, under mirror symmetry, especially that for open strings introduced by Aganagic-Vafa \cite{AV00}, the correspondence predicts an identification of the complex moduli of the 4-fold family mirror to $\tX$ with the open-closed phase space of the 3-fold family mirror to $X$, under which a correspondence between period integrals on the two mirror families should hold. The two types of periods are conjectured to satisfy the same system of differential equations, which can be constructed from either the open geometry of $(X,L)$ or the closed geometry of $\tX$.

In this work, we set stage for a mathematical treatment of the above proposal by focusing on the numerical correspondence. Given a smooth toric Calabi-Yau 3-fold $X$ and a Lagrangian submanifold $L$ of Aganagic-Vafa type, we give an explicit construction of the corresponding toric Calabi-Yau 4-fold $\tX$. We then formulate and prove a precise correspondence between the disk invariants of $(X,L)$ and the genus-zero closed Gromov-Witten invariants of $\tX$.

\subsection{Main result: open/closed correspondence}
Let $X$ be a smooth toric Calabi-Yau 3-fold, $L \subset X$ be an Aganagic-Vafa brane, and $f \in \bZ$ be a framing on the brane $L$. We assume that $L$ is \emph{outer}, intersecting a unique non-compact torus invariant line $l$ in $X$ and bounding a disk $B$ in $l$. We further assume that $L$ is an outer brane in a semi-projective partial compactification of $X$, and that choice of the framing $f$ is generic. Given any effective class
$$
    \beta  = \beta' + d[B] \in H_2(X,L;\bZ)
$$
with $\beta' \in H_2(X;\bZ)$,  the virtual/expected dimension of $\cM_\beta =\Mbar(X,L\mid \beta',d)$,  the moduli of degree-$\beta$ stable maps from genus-zero bordered Riemann surfaces with a single boundary component to $(X,L)$, is zero. The \emph{disk invariant}
$$
N^{X,L,f}_{\beta', d}  
$$
is the virtual number of  points in $\cM_\beta$; it is defined  by torus localization on $\cM_\beta$ and depends on a generic circle action on the pair $(X,L)$ specified by the framing $f$. (The precise definition of  $N^{X,L,f}_{\beta'd}$ will be given in Section \ref{sec:disk-invariants}.)

The disk invariants are a special class of \emph{open Gromov-Witten invariants} of $(X,L,f)$ \cite{FL13,KL01,LLLZ09} which encode stable maps from domains of possibly higher genus and with multiple boundary components at which a general winding profile is realized.  Our main result is a correspondence between the disk invariants of $(X,L,f)$ and the genus-zero closed Gromov-Witten invariants of a toric Calabi-Yau 4-fold, establishing the open/closed correspondence at the numerical level:

\begin{theorem}[Open/closed correspondence]\label{thm:OpenClosedIntro}
Given $X, L, f$ as above, there is a smooth toric Calabi-Yau 4-fold $\tX$ such that
    \begin{itemize}
        \item There is an isomorphism $H_2(X,L;\bZ) \cong H_2(\tX;\bZ)$.

        \item Given any effective class $\beta' \in H_2(X;\bZ)$ of $X$ and $d \in \bZ_{>0}$, if $\tbeta \in H_2(\tX;\bZ)$ is the effective class of $\tX$ corresponding to $\beta := \beta' + d[B] \in H_2(X,L;\bZ)$ under the above isomorphism, we have
        \begin{equation}\label{eqn:OpenClosedIntro}
            N^{X, L, f}_{\beta', d} = dN^{\tX}_{\tbeta,0} = N^{\tX}_{\tbeta,1},
        \end{equation}
        where $N^{\tX}_{\tbeta,0}$ (resp. $N^{\tX}_{\tbeta,1}$) is a suitably defined genus-zero, $0$-pointed (resp. $1$-pointed), degree-$\tbeta$ closed Gromov-Witten invariant of $\tX$.
    \end{itemize}
\end{theorem}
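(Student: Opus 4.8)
The strategy is exactly the two-step factorization announced in the abstract, so I would structure the proof as the composition of two correspondences mediated by a relative Calabi-Yau 3-fold $(Y,D)$ and the total space $\tX = \cO_Y(-D)$.

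**Step 0: Construction of $\tX$ and the homology isomorphism.** First I would give the combinatorial construction of $\tX$ from the toric data of $(X,L,f)$. The brane $L$ meets a unique non-compact invariant line $l$; the framing $f$ selects a primitive vector, and one should partially compactify $X$ to $Y$ by adding a single smooth toric divisor $D$ cutting off $l$ at the disk $B$, then set $\tX = \cO_Y(-D)$, whose fan is obtained by adding one ray. One checks $\tX$ is toric Calabi-Yau of dimension $4$ and semi-projective. The isomorphism $H_2(X,L;\bZ)\cong H_2(\tX;\bZ)$ comes from the long exact sequence of the pair $(X,L)$ together with the excision/Gysin identification $H_2(\tX;\bZ)\cong H_2(Y;\bZ)$ and $H_2(Y;\bZ)\cong H_2(X;\bZ)\oplus \bZ[B]$; the class $[B]$ maps to the fiber class of $\cO_Y(-D)$, i.e. the curve class dual to $D$, so $d[B]\mapsto $ the class with $D\cdot\tbeta = d$. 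I would record this identification explicitly since the rest of the proof tracks classes through it.

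**Step 1: Open-to-relative.** Here I would prove $N^{X,L,f}_{\beta',d} = N^{(Y,D)}_{\beta' + d[B]}$, the genus-zero relative invariant of $(Y,D)$ with a single relative marking of maximal tangency $d$ along $D$. The cleanest route, avoiding formal geometry, is torus localization on both sides with respect to the circle/torus action determined by $f$. On the open side, the fixed loci of $\cM_\beta = \Mbar(X,L\mid\beta',d)$ are the standard ones: a multiple cover of the disk $B$ with winding $d$ glued to a fixed stable map into the compact part of $X$; the disk factor contributes the Aganagic-Vafa/Katz-Liu type vertex. On the relative side, fixed loci of the moduli of relative stable maps to $(Y,D)$ with maximal tangency consist of a multiple cover of the invariant line $l$ (fully ramified over $D$) glued to a fixed map into $X\subset Y$. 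I would match the fixed loci bijectively and then check that the localization contributions agree term by term: the compact parts are literally identical, and the remaining task is to identify the disk-vertex contribution with the relative-vertex (rubber) contribution over $D$. This last identity is essentially the one underlying the topological vertex / Fang-Liu comparison, but done here at the level of virtual localization without passing to formal completions.

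**Step 2: Relative-to-local.** Here I would prove $N^{(Y,D)}_{\beta'+d[B]} = d\,N^{\tX}_{\tbeta,0} = N^{\tX}_{\tbeta,1}$. This is the log-local principle of van Garrel-Graber-Ruddat, and in the present toric/non-compact setting I would again deploy virtual localization on $\tX = \cO_Y(-D)$. Fixed loci of $\Mbar_{0,0}(\tX,\tbeta)$ (resp. $\Mbar_{0,1}(\tX,\tbeta)$) push forward to maps into $Y$; the Euler class of the obstruction bundle $H^1(C, f^*\cO_Y(-D))$ gets analyzed via the normalization sequence and, crucially, the component mapping (with multiplicity $d$) onto $l$ together with $\cO_{\bP^1}(-1)$-type contributions produces the tangency conditions. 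The factor $d$ relating the $0$-pointed and $1$-pointed invariants is the standard one from the extra marked point mapping to the unique $\bT$-fixed point on the fiber over $D\cap l$, or equivalently from $\psi$-class/divisor comparison. I would either cite \cite{vGGR19} adapted to semi-projective targets, or redo the localization matching directly, whichever keeps the dimension and convergence bookkeeping cleanest.

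**Main obstacle.** The hard part is Step 1: making the open-to-relative comparison rigorous without formal geometry. One must (i) set up virtual localization on the open moduli $\cM_\beta$ so that the "disk with boundary on $L$" contributions are genuine contributions of a well-defined equivariant virtual class (this is where the genericity of $f$ and the semi-projective compactification hypothesis on $L$ are used, to guarantee isolated, nondegenerate fixed loci and a well-behaved torus action), and (ii) prove that the local disk-vertex contribution equals the local relative-vertex contribution over $D$ — an equality of explicit hypergeometric-type expressions in the equivariant parameters. Controlling signs, the $\bZ$-ambiguity in the framing, and the rubber integrals over $D$ is the delicate bookkeeping; everything compact-direction factors out and matches trivially, so the entire content is concentrated at the single vertex where $L$ (equivalently $D$) sits.
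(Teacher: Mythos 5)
Your overall architecture is exactly the paper's: factor the correspondence through the relative pair $(Y,D)$ and the local $4$-fold $\tX=\cO_Y(-D)$, and prove each step by matching torus-fixed loci and their localization contributions. Two points in your outline, however, are genuine gaps rather than bookkeeping.

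First, your description of the fixed loci is incomplete in a way that would break the term-by-term matching. On the relative side, $\Mbar(Y/D,\hbeta)^{T'}$ contains components parametrizing maps to \emph{expanded} targets $(Y[m],D_{(m)})$ with $m>0$ (equivalently, fixed loci whose tangency profile over $D$ is a partition of $d$ of length greater than one, with a rubber factor); these do not correspond to any open fixed locus, and one must prove they contribute zero. The paper does this (Lemma \ref{lem:OpenRelativePositive}) by showing each such contribution carries a strictly positive power of $\su_2-f\su_1$, which vanishes under the weight restriction defining the invariant. Likewise on the local side, $\Mbar_{0,0}(\tX,\tbeta)^{\tT'}$ has components whose graphs carry two or more edges over $\bar{l}$ with degrees summing to $d$; the paper shows (Lemma \ref{lem:RelativeLocalValLarge}) that these die under the specialization $\su_4=0$, $\su_2-f\su_1=0$ by a power count using the tree identity $1-|E|+\sum_v(\val(v)-1)=0$. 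Your plan asserts the fixed loci "consist of" only the good components, so it silently assumes the conclusion of these vanishing lemmas; without them the genericity of $f$ has no role to play and the matching is not a bijection.

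Second, your intermediate identities are stated without signs, but each step individually carries a factor $(-1)^{d+1}$: the paper proves $N^{X,L,f}_{\beta',d}=(-1)^{d+1}N^{Y,D}_{\hbeta}$ and $N^{Y,D}_{\hbeta}=(-1)^{d+1}\,d\,N^{\tX}_{\tbeta,0}$, and these signs cancel only in the composition. The sign in the first step comes from comparing the disk-multiple-cover factor $(-1)^{fd}\prod_{k=1}^{d-1}(fd+k)/(d\cdot d!)$ with $(\su_2-f\su_1)\bh(\tau_0,d)|_{\su_2-f\su_1=0}$, and in the second step from the ratio $\tbh(\iota(\tau_0),d)/\bh(\tau_0,d)|_{\su_4=0}=(-1)^d/\su_1$ together with the unstable-vertex integral $-\su_1/d$. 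Your plan would produce false intermediate statements even though the final equality \eqref{eqn:OpenClosedIntro} would come out right; the explicit vertex computations \eqref{eqn:bhTauZeroPositive}--\eqref{eqn:bhTauZeroNegative} are where this is settled and cannot be waved through as "delicate bookkeeping."
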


The construction of the toric Calabi-Yau 4-fold $\tX$ proceeds as follows: We first partially compactify $X$ into a smooth toric 3-fold $Y$ by adding an irreducible toric divisor $D$ whose position depends on the position of the Aganagic-Vafa brane $L$. In particular, our newly-added divisor $D$ contains a new torus-fixed point $p$ that compactifies the torus-invariant line $l$ that $L$ intersects into a $\bP^1$, which we denote by $\bar{l} = l \cup \{p\}$. There is a natural isomorphism $H_2(Y;\bZ) \cong H_2(X,L;\bZ)$, which in particular identifies $[\bar{l}]$ with $[B]$. Then, we take $\tX$ to be the total space of the anti-canonical line bundle $\cO_Y(-D)$. 
The inclusion of the zero section $Y \hookrightarrow \tX$ induces an isomorphism $H_2(Y;\bZ) \cong H_2(\tX;\bZ)$ that identifies effective curve classes. The virtual dimension of 
$\Mbar_{0,n}(\tX,\tbeta)$, the moduli  of genus zero, $n$-pointed, degree $\tbeta$ stable maps to $\tX$, is $n+1$. The $1$-point primary closed Gromov-Witten invariant $N^{\tX}_{\tbeta,1}$ is
the virtual number of maps in $\Mbar_{0,1}(\tX,\tbeta)$ which send  the unique marked point into a toric surface $S\subset \tX$ (which is a codimension 2 constraint).

We establish the correspondence \eqref{eqn:OpenClosedIntro} by relating the invariants to the genus-zero \emph{maximally-tangent relative Gromov-Witten invariants} of $(Y,D)$. Given any effective class $\hbeta \in H_2(Y;\bZ)$ such that $d:= \hbeta \cdot D >0$,  the virtual dimension of $\Mbar(Y/D,\hbeta)$, the moduli  of degree-$\hbeta$ relative stable maps from genus-zero Riemann surfaces with a single marked point to $Y$, such that the contact order of the marked point with $D$ is $d$, the maximum possible, is one. The genus-zero, degree-$\hbeta$ maximally-tangent relative Gromov-Witten invariant 
$$
    N^{Y,D}_{\hbeta}
$$
is the virtual number of maps in $\Mbar(Y/D,\hbeta)$ such that the marked point lies in a complex line $\bC\subset D$ (which is a codimension 1 constraint). 
The maximally-tangent invariants are a special class of relative Gromov-Witten invariants $(Y,D)$ \cite{Li01,Li02} which encode stable maps from domains of possibly higher genus and with multiple marked points at which a general tangency profile is realized.

We note that since our spaces $X, Y, \tX$ are non-compact, the three types of Gromov-Witten invariants involved here are defined via virtual localization \cite{B97, GP99, GV05} and appropriate weight restrictions.

\subsection{The basic example} 
The most basic example is  $X=\bC^3$. In this case  
$$
Y =\mathrm{Tot}\left( \cO_{\bP^1}(f)\oplus \cO_{\bP^1}(-f-1) \right) ,\quad
\tX =\mathrm{Tot}\left( \cO_{\bP^1}(f) \oplus \cO_{\bP^1}(-f-1) \oplus \cO_{\bP^1}(-1) \right), 
$$
and $D\cong \bC^2$ is the fiber of the projection $Y\lra \bar{l}=\bP^1$ over $p$.  

We have  $\beta'=0$ and $\hbeta= d\bar{l}$ for some positive integer $d$.  
The disk invariant
$$
N^{\bC^3,L,f}_{0,d} = \frac{1}{d^2} (-1)^{fd} \frac{\prod_{k=1}^{d-1}(fd+k)}{(d-1)!} 
$$
gives  a virtual count of degree-$d$ disk covers  of an embedded  disk in $\bC^3$ bounded by $L$, and 
specializes to the famous formula $1/d^2$ at the zero framing $f=0$. Moreover, when $f=0$, 
$$
Y = \bC \times S,\quad
\tX = \bC \times  X',
$$
where  $S=\mathrm{Tot}(\cO_{\bP^1}(-1))$ is the blowup of $\bC^2$ at the origin, and 
$X'=\mathrm{Tot}(\cO_{\bP^1}(-1)^{\oplus 2})$ is the resolved conifold. 
We observe that
$$
 \Mbar_{0,n}(\tX,d\bar{l}) = \bC \times \Mbar_{0,n}(X',d\bar{l}),\quad
\Mbar(Y/D,d\bar{l}) =  \bC\times \Mbar(S/F,d\bar{l}),
 $$
 where $F\cong \bC$ is the fiber of the projection $S\lra \bar{l}=\bP^1$ over $p$.  
The closed Gromov-Witten invariants  $N^{\tX}_{d\bar{l},0}$ and  $N^{\tX}_{d\bar{l},1}$ of $\tX$ are reduced to
well-known genus-zero degree-$d$ closed Gromov-Witten invariants of the resolved conifold:  
$$
N^{\tX}_{d\bar{l},0}  = \langle \;  \rangle^{X'}_{0,0, d\bar{l}}  =\frac{1}{d^3},\quad N^{\tX}_{d\bar{l},1}  =\langle H \rangle^{X'}_{0,1, d\bar{l}} =\frac{1}{d^2}. 
$$
Here $1/d^3$ is the Aspinwall-Morrison formula \cite{AM93} of genus-zero, degree-$d$ covers of the zero section $\bP^1\subset X'$, $H$ is the hyperplane class, and 
$\langle H\rangle^{X'}_{0,1, d\bar{l}} = d \langle\; \rangle^{X'}_{0,0, d\bar{l}}$ by the divisor equation.  
The  relative Gromov-Witten invariant
$$
N^{Y,D}_{d\bar{l}} =\frac{(-1)^{d+1}}{d^2}
$$
of the pair $(Y,D)$  coincides with  the relative/log Gromov-Witten invariant of the pair $(S,F)$ computed in the proof of Proposition 2.4 in \cite{vGGR19}.

\subsection{Main ingredient I: open/relative correspondence}
The first step in proving Theorem \ref{thm:OpenClosedIntro} is a correspondence between the disk invariants of $(X,L,f)$ and the genus-zero maximally-tangent relative Gromov-Witten invariants of $(Y,D)$: 

\begin{theorem}[Open/relative correspondence \cite{FL13, LLLZ09}]\label{thm:OpenRelativeIntro}
Let $\beta' \in H_2(X;\bZ)$ be an effective class of $X$, $d \in \bZ_{>0}$, and $\hbeta \in H_2(Y;\bZ)$ be the effective class of $Y$ corresponding to $\beta := \beta' + d[B]$ under the isomorphism $H_2(X,L;\bZ) \cong H_2(Y;\bZ)$. Then
$$
    N^{X,L,f}_{\beta', d} = (-1)^{d + 1}N^{Y,D}_{\hbeta}.
$$
\end{theorem}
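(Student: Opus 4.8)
\textbf{Proof strategy for Theorem \ref{thm:OpenRelativeIntro}.} Both sides are defined by virtual torus localization together with the weight restrictions forced by non-compactness, so the plan is to choose compatible torus actions on the pair $X \subset Y$ and then match the two fixed-point expansions term by term, every contribution agreeing except for a single local ``cap'' factor that produces the overall sign $(-1)^{d+1}$.

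I would begin by fixing the one-dimensional subtorus $T \cong \bC^* \subset (\bC^*)^3$ cut out by the framing $f$: it acts on $Y$, preserves $X$, the brane $L$, the divisor $D$, the invariant line $l$ and its compactification $\bar l = l \cup \{p\}$, and rotates the disk $B \subset l$; genericity of $f$ guarantees isolated $T$-fixed points and isolated $T$-fixed lines on both $X$ and $Y$, which is exactly the input needed to make $N^{X,L,f}_{\beta',d}$ and $N^{Y,D}_{\hbeta}$ well defined. Under this action, the $T$-fixed loci of $\cM_\beta = \Mbar(X,L\mid\beta',d)$ are indexed by decorated graphs: a distinguished ``disk part'' — a possibly disconnected, possibly multiply covered cover of $B$ of total winding $d$, attached at the torus-fixed point $q$ of $l$ (the one contained in $B$) — glued to a $T$-fixed genus-zero closed subcurve in $X$ of class $\beta'$, the latter being the standard toric localization datum. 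On the relative side, relative virtual localization on $\Mbar(Y/D,\hbeta)$ expands the $T$-fixed loci into the very same closed data in $X$, glued at $q$ to a ``relative part'' over $p$: a degree-$d$ cover of $\bar l$ that is totally ramified over $p$ (hence maximally tangent to $D$ there), possibly with a rubber component in the expanded degeneration $\bP(N_{D/Y}\oplus\cO)$, carrying the single marked point — which localization pins to $p$ and whose constraint ``lies in a complex line $\bC \subset D$'' contributes the equivariant weight of that $T$-invariant line at $p$. A preliminary step is to check that all of these fixed loci are proper and that the non-compact directions of $Y$ and of $D$ acquire nonzero $T$-weights, so that both localization sums are genuine rational functions lying in $H^*_T(\mathrm{pt})$ after the weight restrictions.

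The comparison then reduces to a single local identity. The closed $\beta'$-subcurve contributes identically on the two sides: $X \hookrightarrow Y$ is an isomorphism away from $D$, the closed subcurve never meets $D$, and its deformation--obstruction theory — including the node at $q$ — is unchanged. Hence $N^{X,L,f}_{\beta',d} = (-1)^{d+1} N^{Y,D}_{\hbeta}$ follows once one shows that the open ``disk cap'' — the Katz--Liu/equivariant-vertex contribution of a degree-$d$ winding over $B$ that defines $N^{X,L,f}$ and specializes to $\tfrac{1}{d^2}(-1)^{fd}\tfrac{\prod_{k=1}^{d-1}(fd+k)}{(d-1)!}$ when $X=\bC^3$ — equals $(-1)^{d+1}$ times the relative ``cap'' contribution of a degree-$d$ maximally tangent cover of $\bar l$ relative to $D$. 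I would evaluate the relative cap by running relative virtual localization once more on the rubber moduli over $p$, thereby expressing it as a Hodge-type integral over genus-zero moduli (a one-leg rubber/descendent integral), and then invoke the known closed-form evaluation of such integrals — of Mari\~no--Vafa type, or a direct computation on $\bP^1$-rubber — to bring it into the same hypergeometric shape as the disk cap.

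The main obstacle is exactly this cap identity, including the bookkeeping of the sign $(-1)^{d+1}$: it is the honest-geometry incarnation of ``the one-leg topological vertex equals the corresponding relative invariant,'' which underlies \cite{LLLZ09, FL13} but was obtained there through formal toric geometry; the content of the present argument is to run the comparison directly on the compact moduli $\Mbar(Y/D,\hbeta)$. The sign should emerge from comparing the canonical orientation and virtual normal bundle built into the Katz--Liu disk integrand with the complex orientation on $\Mbar(Y/D,\hbeta)$, and its appearance is consistent with the basic example, where it forces $N^{Y,D}_{d\bar l} = (-1)^{d+1}/d^2$ at $f=0$. The remaining items — properness of the fixed loci, well-definedness under weight restriction, and the combinatorial bijection of decorated graphs — are routine but must be handled with care precisely because $X$, $Y$ and $D$ are all non-compact.
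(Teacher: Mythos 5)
Your overall architecture --- localize on both sides, match decorated graphs, observe that the closed $\beta'$-part of the fixed-locus data contributes identically, and reduce everything to one local ``cap'' identity carrying the sign $(-1)^{d+1}$ --- is exactly how the paper argues for the \emph{unexpanded} graphs (Lemma \ref{lem:OpenRelativeZero}): there the bijection $\Gamma^0_{0,1}(X,\beta')\leftrightarrow\Gamma^{Y,D,0}_{\hbeta}$ reduces the theorem to the identity \eqref{eqn:OpenRelContributionCompare}, which is verified by an elementary product computation of the edge factor $\bh(\tau_0,d)$ from $N_{l_{\tau_0}/Y}\cong\cO(f)\oplus\cO(-f-1)$; no Mari\~no--Vafa-type evaluation or rubber integral is needed in genus zero. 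But your proposal has a genuine gap in how it treats the fixed loci over expanded targets. When $m>0$, the subcurve mapping to the rubber $Y(m)$ attaches to $u^{-1}(Y)$ along $\ell=\ell(\mu(\vGa))$ nodes with contact orders $\mu_1+\cdots+\mu_\ell=d$, and each node sprouts its own edge covering $l_{\tau_0}$; for $\ell>1$ the closed part of the domain in $X$ therefore touches $p_{\sigma_0}$ through \emph{several} edges over $l_{\tau_0}$. These graphs have no counterpart on the open side (the open fixed loci always have a single disk attached at a single node $q$, since the domain has one boundary component), so the localization sum does \emph{not} factor as (closed $\beta'$-contribution) $\times$ (one cap), and your ``single local identity'' cannot absorb them. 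The paper disposes of them by a separate vanishing argument (Lemma \ref{lem:OpenRelativePositive}): each $\bh(\tau_0,\mu_j)$ has a simple pole in $\su_2-f\su_1$, the rubber normal directions contribute $(\su_2-f\su_1)^{2\ell-2}$, and together with the insertion the net power of $\su_2-f\su_1$ in such a graph's contribution is $\ell-1>0$, so it dies under the restriction $\su_2-f\su_1=0$. Your proposal neither identifies this structural mismatch nor supplies a substitute for this vanishing.

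A related, smaller issue: you propose to work from the start with the one-dimensional subtorus cut out by the framing. The relative invariant $N^{Y,D}_{\hbeta}$ is defined with the insertion $\ev^*c_1^{T'}(\cO_D(L))=\su_2-f\su_1$, which restricts to zero on that subtorus; one must keep the full two-dimensional Calabi--Yau torus $T'$, treat $\su_2-f\su_1$ as a nonzero equivariant parameter throughout, and impose the specialization only at the end. This is not cosmetic --- the entire mechanism that kills the expanded-target graphs is the order of vanishing in $\su_2-f\su_1$ before specialization, so collapsing the torus at the outset removes the tool you would need to close the gap above.
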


Theorem \ref{thm:OpenRelativeIntro} is a special case of a general correspondence between open Gromov-Witten invariants of $(X,L,f)$ and relative Gromov-Witten invariants of $(Y,D)$, which involves invariants of higher genus and general winding/tangency profiles. The general open/relative correspondence is already established by Fang-Liu \cite{FL13}, whose proof builds upon a relation between the open Gromov-Witten invariants of $(X,L,f)$ and the \emph{formal} relative Gromov-Witten invariants of the $(\hat{Y}, \hat{D})$, the formal completion of $(Y,D)$ along the toric 1-skeleton of $Y$. Formal relative Gromov-Witten invariants for a general formal toric Calabi-Yau 3-fold relative to a collection of boundary divisors are introduced by Li-Liu-Liu-Zhou \cite{LLLZ09} as a fundamental building block for a mathematical theory of the topological vertex. In our case, since the relative condition is specified by an irreducible divisor $D$, we can directly define and compute the relative invariants without resorting to formal geometry. This also simplifies our comparison between the relative invariants and the open invariants while proving Theorem \ref{thm:OpenRelativeIntro}.

\subsection{Main ingredient II: relative/local correspondence}
The second step in proving Theorem \ref{thm:OpenClosedIntro} is a correspondence in genus zero between the maximally-tangent relative Gromov-Witten invariants of $(Y,D)$ and the closed Gromov-Witten invariants of $\tX$, which we can view as a \emph{local} Calabi-Yau 4-fold over the base $Y$:

\begin{theorem}[Relative/local correspondence]\label{thm:RelativeLocalIntro}
Let $\hbeta \in H_2(Y; \bZ)$ be an effective class of $Y$ and $\tbeta \in H_2(X;\bZ)$ be the corresponding effective class of $\tX$, such that $d:= \hbeta \cdot D >0$. Then
$$
    N^{Y,D}_{\hbeta} = (-1)^{d + 1}d N^{\tX}_{\tbeta,0} = (-1)^{d + 1} N^{\tX}_{\tbeta,1}.
$$
\end{theorem}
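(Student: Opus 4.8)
The plan is to prove Theorem \ref{thm:RelativeLocalIntro} as an instantiation of the log-local principle of van Garrel-Graber-Ruddat \cite{vGGR19} in our non-compact toric setting, where everything is made rigorous via virtual torus localization. First I would set up the comparison geometry: since $\tX = \cO_Y(-D)$ with $D$ an irreducible smooth nef (in fact, here effective and ``toric maximal-tangency'') divisor, a degree-$\tbeta$ genus-zero stable map to $\tX$ whose image is not contained in the zero section must factor through it after accounting for the obstruction bundle, so $\Mbar_{0,1}(\tX,\tbeta)$ is, as a space with perfect obstruction theory, built from $\Mbar_{0,1}(Y,\hbeta)$ together with the line bundle $\cO_Y(-D)$. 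The key input is the comparison of obstruction theories: over $\Mbar_{0,1}(Y,\hbeta)$ the virtual class of $\tX$ differs from that of $Y$ by the Euler class of $R\pi_*f^*\cO_Y(-D)$, while the relative virtual class of $(Y,D)$ differs from that of $Y$ by (roughly) $R\pi_*f^*\cO_Y(-D)$ twisted by the divisorial/contact data at the marked point. This is precisely the local structure that van Garrel-Graber-Ruddat exploit; the degree-$d$ factor and the sign $(-1)^{d+1}$ arise exactly as in their Proposition 2.4 (which we have already matched in the basic example).

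Concretely, I would proceed in the following steps. Step 1: reduce all three invariants to torus-fixed loci. Because $X,Y,\tX$ are toric, choose the torus $\bT$ acting on $\tX$ preserving $Y$, $D$, $S$, and the line $\bC \subset D$; express $N^{\tX}_{\tbeta,0}$, $N^{\tX}_{\tbeta,1}$, and $N^{Y,D}_{\hbeta}$ as sums over $\bT$-fixed graphs with the appropriate weight restrictions making the (non-compact) integrals well-defined. Step 2: establish a bijection of fixed loci. A $\bT$-fixed relative stable map to $(Y/D)$ of degree $\hbeta$ with maximal tangency $d$ at a point of $\bC \subset D$ corresponds, after gluing in the ``rubber''/expanded component appropriately, to a $\bT$-fixed stable map to $\tX$ of degree $\tbeta$ (meeting $S$, for the $1$-pointed invariant) and vice versa; crucially, the non-compact directions of $\tX$ (the fiber of $\cO_Y(-D)$) and of the relative geometry align so that contributing graphs biject. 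Step 3: match the contributions edge-by-edge and vertex-by-vertex. The new ingredient relative to $Y$ itself is the comparison, at the marked point over $D$, between the relative tangency condition and the condition of lying in $S$: this is where the combinatorial identity $N^{Y,D}_{\hbeta} = (-1)^{d+1} d\, N^{\tX}_{\tbeta,0}$ emerges, with the $d$ coming from the order of tangency (equivalently, the divisor equation $N^{\tX}_{\tbeta,1} = d\, N^{\tX}_{\tbeta,0}$ on the $\tX$ side) and the sign from the standard rubber/relative normalization. Step 4: assemble the localization sums to conclude.

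Alternatively — and this may be cleaner — I would avoid a bare-hands localization comparison and instead invoke the non-compact analogue of the van Garrel-Graber-Ruddat degeneration/deformation argument directly: degenerate $\tX = \cO_Y(-D)$ to the normal cone of $Y \subset \tX$ (equivalently use the deformation to $\bP(\cO_Y(-D)\oplus\cO_Y)$), apply the degeneration formula, and observe that the only contributing piece is a relative invariant of $(Y,D)$ glued to a trivial ``cap'' over the $\bP^1$-bundle, with the cap contributing the factor $(-1)^{d+1}/d$ from a local computation identical to the resolved-conifold/$(S,F)$ computation already quoted. One must check that the degeneration formula and the vanishing of all other strata survive in the non-compact setting after the weight restrictions; this is the technical heart.

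The main obstacle I anticipate is \emph{not} the formal shape of the identity — that is dictated by \cite{vGGR19} and already verified in the basic example — but rather making the non-compact, equivariant bookkeeping airtight: ensuring that the weight restrictions defining $N^{\tX}_{\tbeta,0}$, $N^{\tX}_{\tbeta,1}$, and $N^{Y,D}_{\hbeta}$ are mutually compatible, that the relevant $\bT$-fixed loci are proper so the localization residues are honest numbers, and that no spurious fixed loci (e.g. maps with components running off to infinity in the fiber of $\cO_Y(-D)$, or degenerate relative configurations) contribute. In particular, matching the obstruction-theory contributions of the ``extra'' line-bundle direction $\cO_Y(-D)$ against the rubber factor in the relative theory, including all equivariant weights, is the step I expect to require the most care; the genericity of the framing $f$ (hence of the induced circle action) is what guarantees isolatedness and is used exactly here.
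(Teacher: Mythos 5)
Your primary plan --- define all three invariants equivariantly, biject the torus-fixed graphs of $\Mbar(Y/D,\hbeta)$ with those of $\Mbar_{0,0}(\tX,\tbeta)$ having a single edge over $\bar{l}$, match contributions graph-by-graph to extract the factor $(-1)^{d+1}/d$, and kill all remaining fixed loci (expanded targets on the relative side, multiple edges over $\bar{l}$ on the local side) by power-counting in the restricted weights --- is essentially the paper's proof, including the derivation of $N^{\tX}_{\tbeta,1}=dN^{\tX}_{\tbeta,0}$ by a divisor-equation-type localization argument. The only structural difference is that the paper does not glue rubber components into the correspondence but shows their contributions vanish outright after setting $\su_2-f\su_1=0$, exactly the possibility you flag under ``degenerate relative configurations''; your alternative route via degeneration to the normal cone is mentioned in the paper only as a remark and is not carried out.
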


Theorem \ref{thm:RelativeLocalIntro} can be viewed as an instantiation of the \emph{log-local principle} of van Garrel-Graber-Ruddat \cite{vGGR19}, which at the numerical level conjectures a correspondence between the maximally-tangent \emph{log} Gromov-Witten invariants of a projective variety $Y$ relative to a normal crossing divisor $D$ whose irreducible components $D_1, \dots, D_k$ are smooth and nef, and the closed Gromov-Witten invariants of the total space of the vector bundle $\cO_Y(-D_1) \oplus \cdots \oplus \cO_Y(-D_k)$. Since its proposal, this principle has been verified in various cases and studied from different perspectives; see e.g. \cite{BBvG19, BBvG20, BBvG20b, BFGW21, BS22, CvGKT21, NR19, TY20}. It is also known not to hold in general when $D$ is reducible \cite{NR19}. Our Theorem \ref{thm:RelativeLocalIntro} provides a general class of examples for the log-local principle in the extended setting where the base $Y$ is non-compact, in view of the identification of the log and relative Gromov-Witten invariants \cite{AMW14}.

Moreover, we note that recent works of Bousseau-Brini-van Garrel \cite{BBvG20, BBvG20b} present examples for the open/closed correspondence that originate from applying the log-local principle to log Calabi-Yau surfaces with maximal boundary, or \emph{Looijenga pairs}. To certain Looijenga pairs $(Y,D)$, where $Y$ could be an orbifold, they show that one can associate a local toric Calabi-Yau 3-fold $X$ with an Aganagic-Vafa brane $L$, as well as a local Calabi-Yau 4-fold $\tX$, such that there is a correspondence between the disk invariants of $(X,L)$ and the genus-zero closed Gromov-Witten invariants of $\tX$. Our Theorem \ref{thm:OpenClosedIntro} generalizes the above examples, at least in the smooth case, in that it applies to general toric Calabi-Yau 3-folds that are not necessarily local surfaces. We note that Conjecture 1.1 of \cite{BBvG20} proposes a variant of the log-local principle for more general log Calabi-Yau pairs, and our Theorem \ref{thm:RelativeLocalIntro} verifies this conjecture for the toric pairs $(Y,D)$ arising from our constructions.

\subsection{Our techniques}
Let $\beta' \in H_2(X;\bZ)$ be an effective class of $X$, $d \in \bZ_{>0}$, $\hbeta \in H_2(Y;\bZ)$ be the effective class of $Y$ corresponding to $\beta:= \beta' + d[B]$ under the isomorphism $H_2(X,L;\bZ) \cong H_2(Y;\bZ)$, and $\tbeta \in H_2(\tX:\bZ)$ be the corresponding effective class of $\tX$. Since our spaces $X, Y, \tX$ are non-compact, we define and compare the three types of Gromov-Witten invariants involved in our correspondences via virtual localization. Specifically:
\begin{itemize}
    \item For the disk invariant $N^{X,L,f}_{\beta',d}$, we adopt the definition of Fang-Liu \cite{FL13}, which is based on the moduli space $\Mbar(X, L \mid \beta', d)$ of open stable maps of Katz-Liu \cite{KL01}. The Calabi-Yau condition on $X$ specifies a 2-dimensional subtorus $T'$ of the algebraic 3-torus $T$ in $X$ such that the action of the maximal compact subtorus $T'_{\bR} \subset T'$ on $X$ preserves $L$. $N^{X,L,f}_{\beta',d}$ is defined and computed by $T'_{\bR}$-equivariant localization on $\Mbar(X, L \mid \beta', d)$.

    \item For the maximally-tangent relative Gromov-Witten invariant $N^{Y,D}_{\hbeta}$, we use the moduli space $\Mbar(Y/D, \hbeta)$ of relative stable maps of Li \cite{Li01, Li02}, which includes maps to $(Y,D)$ as well as \emph{expanded targets} $(Y[m], D_{(m)})$ that are deformations of $(Y,D)$. $N^{Y,D}_{\hbeta}$ is defined and computed by $T'$-equivariant localization on $\Mbar(Y/D, \hbeta)$.

    \item The closed Gromov-Witten invariants $N^{\tX}_{\tbeta,0}$ and $N^{\tX}_{\tbeta,1}$ are defined and computed by $\tT'$-equivariant localization on the usual moduli spaces $\Mbar_{0,0}(\tX,\tbeta)$ and $\Mbar_{0,1}(\tX,\tbeta)$ of stable maps, where $\tT'$ is a 3-dimensional subtorus of the algebraic 4-torus $\tT$ in $\tX$ specified by the Calabi-Yau condition. We show that the two invariants are related by
    $$
        N^{\tX}_{\tbeta,1} = d N^{\tX}_{\tbeta,0}.
    $$
    Thus for the rest of this section, we focus on the relation of $N^{\tX}_{\tbeta,0}$ to the open and relative invariants.
\end{itemize}

To establish the correspondences, we directly relate the components of the torus-fixed loci of the above moduli spaces and compare the contributions of the corresponding components to the Gromov-Witten invariants in the localization computation. The relation among components can be described by the following relation among stable maps representing torus-fixed points in the moduli space: Given an open stable map $u: (C, \partial C) \to (X,L)$ that represents a point in $\Mbar(X, L \mid \beta', d)^{T'_{\bR}}$, we can write $C = C' \cup C''$ where $C'$ is a closed Riemann surface of arithmetic genus $0$ and $C''$ is a disk attached to $C'$ at a node $q$ of $C$. Then $u|_{C''}$ maps $C''$ as a degree-$d$ cover onto the disk $B$ bounded by the Aganagic-Vafa brane $L$. Now, $u$ induces a map $\tu: \tC = C' \cup \tC'' \to Y \subset \tX$, where $\tC'' \cong \bP^1$ is attached to $C'$ at $q$ and mapped to $\bar{l} \cong \bP^1$ as a degree-$d$ cover. See Figure \ref{fig:RelateMaps} for an illustration of this construction. If $x \in \tC''$ is the unique point in $\tu^{-1}(p)$, then $\tu:(\tC, x) \to (Y,D)$ is a relative stable map representing a point in $\Mbar(Y/D, \hbeta)^{T'}$. Moreover, $\tu: \tC \to \tX$ is a stable map representing a point in $\Mbar_{0,0}(\tX,\tbeta)^{\tT'}$. We then identify the component of $\Mbar(X, L \mid \beta', d)^{T'_{\bR}}$ containing $[u]$ with the component of $\Mbar(Y/D, \hbeta)^{T'}$ (or $\Mbar_{0,0}(\tX,\tbeta)^{\tT'}$) containing $[\tu]$ and explicitly compare their localization contributions.

\begin{figure}[h]
      \begin{tikzpicture}[scale=.7]
          \coordinate (1) at (-3,0);
          \node at (1) {$\bullet$};
          \draw (1) .. controls (-1, 0.9) and (1, 1) .. (2,0.9);
          \draw (1) .. controls (-1, -0.9) and (1, -1) .. (2,-0.9);
          \draw[thick] (0, 0.86) to[bend left] (0,-0.86);
          \draw[dashed, thick] (0, 0.86) to[bend right] (0,-0.86);
          \draw[dashed] (-0.7, 3) -- (0.7,2) -- (0.7, -3) -- (-0.7, -2) -- (-0.7, 3);

          \node at (2, 0) {$l$};
          \node at (-0.3, 2.1) {$L$};
          \node at (-1.5, 0) {$B$};
          \node at (-4.5, -3) {$X$};

          \draw (1) .. controls (-3.5, -0.9) and (-3.5, -1.5) .. (-3.5, -2);
          \draw (1) .. controls (-2.5, -0.9) and (-2.5, -1.5) .. (-2.5, -2);
        
          \draw (1) .. controls (-3.1, 0.7) and (-3.5,1.2) .. (-4, 1.7);
          \draw (1) .. controls (-3.5, 0) and (-4, 0.2) .. (-4.8, 0.7);

          \draw (-3, 6) .. controls (-1.8, 6.6) and (-1.2, 6.7) .. (0, 6.8);
          \draw (-3, 6) .. controls (-1.8, 5.4) and (-1.2, 5.3) .. (0, 5.2);
          \draw[thick] (0, 6.8) to[bend left] (0,5.2);
          \draw[dashed, thick] (0, 6.8) to[bend right] (0,5.2);
          \draw (-3,6) arc (0:60:2);
          \draw (-3,6) arc (0:-60:2);
          \node at (-3,6) {$\bullet$};
          \node at (-2.8, 5.5) {$q$};
          \node at (-4, 6) {$C'$};
          \node at (-1, 6) {$C''$};
          \node at (-2, 7.5) {$C$};

          \draw[->] (-2, 4) -- (-2, 2);
          \node at (-2.5, 3) {$u$};


          \node at (4,3) {$\Rightarrow$};

          \coordinate (3) at (7,0);
          \coordinate (4) at (13,0);
          \node at (3) {$\bullet$};
          \node at (4) {$\bullet$};
          \draw (3) to[bend right] (4);
          \draw (3) to[bend left] (4);
          \draw[thick] (10, 0.86) to[bend left] (10,-0.86);
          \draw[dashed, thick] (10, 0.86) to[bend right] (10,-0.86);
          \draw[dashed] (11.5, 4) -- (15,3) -- (15, -4) -- (11.5, -3) -- (11.5, 4);

          \node at (12.7, -0.8) {$p$};
          \node at (9, 0) {$\bar{l}$};

          \node at (7.5, -3) {$Y \subset \tX$};
          \node at (14.5, 2.5) {$D$};

          \draw (3) .. controls (6.5, -0.9) and (6.5, -1.5) .. (6.5, -2);
          \draw (3) .. controls (7.5, -0.9) and (7.5, -1.5) .. (7.5, -2);
        
          \draw (3) .. controls (6.9, 0.7) and (6.5,1.2) .. (6, 1.7);
          \draw (3) .. controls (6.5, 0) and (6, 0.2) .. (5.2, 0.7);
          
          \draw (4) .. controls (12.5, 0.9) and (12.5, 1.5) .. (12.5, 2);
          \draw (4) .. controls (13.5, 0.9) and (13.5, 1.5) .. (13.5, 2);

          \draw (4) .. controls (13.1, -0.7) and (13.5,-1.2) .. (14, -1.7);
          \draw (4) .. controls (13.5, 0) and (14, -0.2) .. (14.8, -0.7);

          \draw (7, 6) to[bend left] (13, 6);
          \draw (7, 6) to[bend right] (13, 6);
          \draw[thick] (10, 6.9) to[bend left] (10,5.1);
          \draw[dashed, thick] (10, 6.9) to[bend right] (10,5.1);
          \draw (7,6) arc (0:60:2);
          \draw (7,6) arc (0:-60:2);
          \node at (7,6) {$\bullet$};
          \node at (7.2, 5.5) {$q$};
          \node at (6, 6) {$C'$};
          \node at (9, 6) {$\tC''$};
          \node at (12, 7.5) {$\tC$};
          \node at (13,6) {$\bullet$};
          \node at (13.5,6) {$x$};

          \draw[->] (10, 4) -- (10, 2);
          \node at (9.5, 3) {$\tu$};

        \end{tikzpicture}
\caption{Constructing the stable map $\tu$ to $Y \subset \tX$ representing a $T'$-fixed (or $\tT'$-fixed) point in the moduli from the open stable map $u$ to $(X,L)$ representing a $T'_{\bR}$-fixed point in the moduli.}
\label{fig:RelateMaps}
\end{figure}
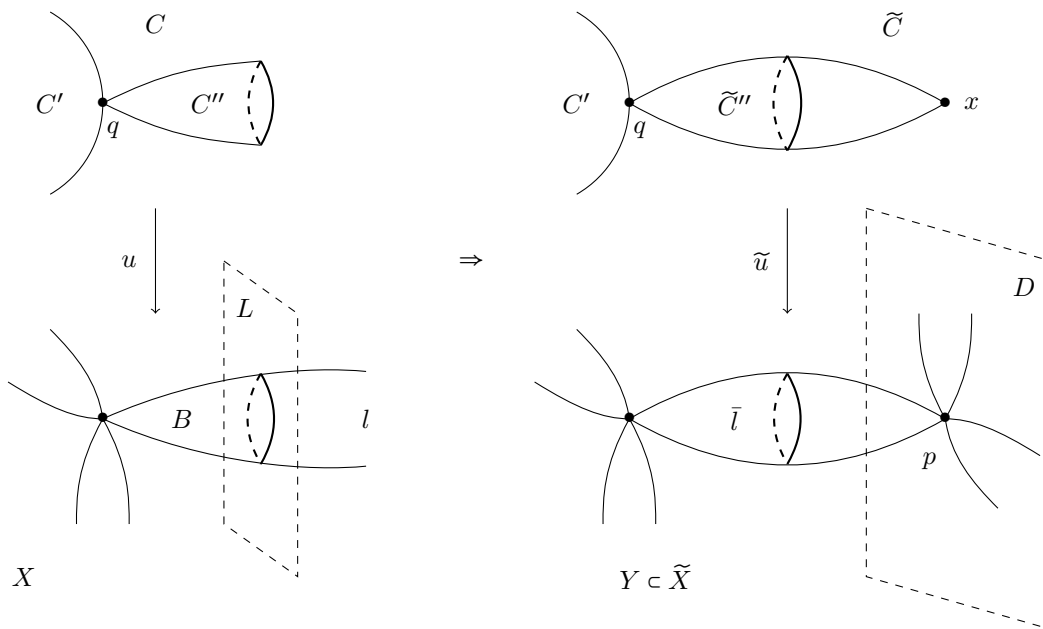

Additional care is required to complete the argument. For relative invariants of $(Y,D)$, $\Mbar(Y/D, \hbeta)$ contains points represented by stable maps to an \emph{expanded target} $(Y[m],D_{(m)})$. We show, however, that under our specialization of $T'$-weights in localization, there is no contribution to $N^{Y,D}_{\hbeta}$ from components that correspond to maps to an expanded target. For closed invariants of $\tX$, there are additional components of $\Mbar_{0,0}(\tX,\tbeta)^{\tT'}$ that do not correspond to components of $\Mbar(X, L \mid \beta', d)^{T'_{\bR}}$ in the way described above. For instance, in a stable map to $\tX$ representing a $\tT'$-fixed point, there could be two irreducible $\bP^1$'s in the domain mapping to $\bar{l}$ as covers of degrees $d_1, d_2>0$ respectively, such that $d_1 + d_2 = d$. We show that under our specialization of $\tT'$-weights in localization, none of such additional components contribute to $N^{\tX}_{\tbeta,0}$. Therefore, the only contributing components are the ones corresponding to components of $\Mbar(X, L \mid \beta', d)^{T'_{\bR}}$.

\subsection{Generalizations and implications}
The numerical open/closed correspondence (Theorem \ref{thm:OpenClosedIntro}) that we establish in this work has various generalizations and implications, which develop and possibly extend the original proposal of Mayr and Lerche-Mayr \cite{LM01,Mayr01}. We briefly indicate some of those here and defer a rigorous treatment to future work.

\subsubsection{Marked points, descendant invariants, generating functions}
It would be straightforward to generalize our correspondences to incorporate Gromov-Witten invariants with marked points as well as descendant invariants, by generalizing our localization computations and comparisons. Moreover, the correspondences between invariants at corresponding classes could be pieced together into correspondences at the level of generating functions.

\subsubsection{Mirror symmetry and B-model identifications}
In the original proposal \cite{LM01,Mayr01}, the open/closed correspondence is  formulated and studied in the context of mirror symmetry for toric Calabi-Yau manifolds constructed
by symplectic reduction. As complex algebraic varieties, they are semi-projective toric varieties, which are GIT quotients of a vector
space $\bC^R$ by the linear action of a subtorus $G\subset (\bC^*)^R$.

Assume that $X$ is semi-projective. Let $\tZ$ be a suitable semi-projective toric Calabi-Yau 4-fold that partially compactifies $\tX$. Then on the mirror family of $\tZ$, period integrals of the holomorphic $4$-form are solutions to a system of differential equations $PF_{\tZ}$ known as the \emph{Picard-Fuchs system} \cite{HV00}, which can be described by the toric data of $\tZ$ (or of $\tX$). On the open counterpart, on the mirror family of $X$, period integrals of the holomorphic $3$-form on \emph{relative} cycles with prescribed boundary conditions are solutions to an \emph{extended} Picard-Fuchs system $PF_X$ that incorporates the open moduli parameter \cite{AV00}, which can be described by the data of $(X,L,f)$. The open/closed correspondence then proposes an identification of the two Picard-Fuchs systems $PF_{\tZ}$ and $PF_X$, and a correspondence between the periods coming from the two mirror families. Both can be obtained from how we construct $\tX$ from $(X,L,f)$.

Moreover, our constructions and results can be used to estabilish a version of the open/closed correspondence based on Givental-style mirror symmetry. Given the semi-projective toric Calabi-Yau $4$-fold $\tZ$ as above, Givental's mirror theorem \cite{Givental98} equates the \emph{$J$-function} of $\tZ$, $J_{\tZ}$, defined via the closed Gromov-Witten invariants, to its \emph{$I$-function} $I_{\tZ}$, which is an explicit hypergeometric function that solves the Picard-Fuchs system $PF_{\tZ}$. On the open counterpart, for $(X,L,f)$, Fang-Liu \cite{FL13} define the \emph{extended} $J$-function $J_X$ using the disk invariants and show that it is equal to the \emph{extended} $I$-function $I_X$, which is an explicit solution to the extended Picard-Fuchs system $PF_X$. Our correspondences would allow us to directly relate the $J$-functions $J_X$ and $J_{\tZ}$. This would be further mirrored by a relation between the $I$-functions $I_X$ and $I_{\tZ}$ that can be derived from how we construct $\tX$ from $(X,L,f)$.


\subsubsection{Orbifolds and ineffective branes, open/closed correspondence for inner branes}\label{sect:InnerOrbi}
Our results can be extended to more general pairs $(X,L)$. First, we can allow $X$ to be an \emph{orbifold}, or a smooth toric Deligne-Mumford stack \cite{BCS05,FMN10} with trivial generic stablizer, and $L$ to be an \emph{ineffective} brane in $X$, or a brane with non-trivial generic stablizers. For this purpose, we may use the localization of orbifold Gromov-Witten invariants \cite{Liu13}, and that of open orbifold Gromov-Witten invariants \cite{FLT12,Ross14}.

The open/closed correspondence can also be generalized to an {\em inner} brane  $L$, that is, the unique torus invariant line in $X$ that it intersects is compact.

\subsubsection{Open-closed phase space and wall-crossing}  
As indicated by \cite{LM01,Mayr01}, the open/closed correspondence should be compatible with wall-crossings. Given a pair $(X,L)$, assuming that $X$ is semi-projective, we may change $X$ to a toric Calabi-Yau $3$-fold that differs from $X$ by a \emph{crepant transformation}, which is a birational map that preserves the canonical sheaf. We may also replace $L$ by a brane at a different location. For a pair $(X', L')$ arising in this way, if $\tX'$ is the associated toric Calabi-Yau $4$-fold and $\tZ'$ is a suitable semi-projective partial compactification, then $\tZ'$ should also differ from the semi-projective partial compactification $\tZ$ of $\tX$ by a crepant transformation. Such toric Calabi-Yau $4$-folds correspond to chambers in the \emph{secondary fan} $\Lambda$ of $\tZ$, and under the open/closed correspondence, these chambers correspond to the different possible pairs or \emph{open-closed phases} $(X',L')$. Crossing a wall in $\Lambda$ corresponds to a change of phase from either performing a crepant transformation on the $3$-fold, or moving the brane to an adjacent location. We may then view $\Lambda$ as the \emph{open-closed phase space} that parameterizes changes of phases. Our results in this work and their generalizations (Section \ref{sect:InnerOrbi}) would be the main ingredients in rigorously estabilishing this framework.

The correspondence between wall-crossings of $4$-folds and wall-crossings of open-closed phases has implications on its own. For instance, it provides an alternative approach to the \emph{Open Crepant Transformation Conjecture} for toric Calabi-Yau $3$-folds, which relates the disk invariants of open-closed phases related in the way above (see \cite{Yu20} and previous works summarized therein). Using the open/closed correspondence, one could give an alternative proof of this conjecture based on the well-established Crepant Transformation Conjecture that relates closed Gromov-Witten invariants of toric Calabi-Yau $4$-folds that differ by crepant transformations (see \cite{CIJ18} and previous works summarized therein).

\subsection{Organization of the paper}
In Section \ref{sect:Construction}, we construct the relative Calabi-Yau $3$-fold $(Y,D)$ and the local Calabi-Yau $4$-fold $\tX$ and discuss their basic geometric and topological relations to the toric Calabi-Yau $3$-fold $X$. Then in Section \ref{sect:GW}, we provide detailed localization computations of the open, relative, and closed Gromov-Witten invariants involved in our correspondences. Based on these computations, in Section \ref{sect:Correspondence}, we establish the open/relative (Theorem \ref{thm:OpenRelativeIntro}) and relative/local (Theorem \ref{thm:RelativeLocalIntro}) correspondences and thereby derive the open/closed correspondence (Theorem \ref{thm:OpenClosedIntro}).

\subsection{Acknowledgements}
We thank Pierrick Bousseau, Andrea Brini, Michel van Garrel, Helge Ruddat, and Fenglong You for illuminating discussions, comments, and suggestions.
The first named author is partially supported by NSF grant DMS-1564497.

\section{Toric geometry and constructions}\label{sect:Construction}
In this section, we give the explicit constructions of the relative Calabi-Yau 3-fold $(Y,D)$ and the local Calabi-Yau 4-fold $\tX$ from the data of a toric Calabi-Yau 3-fold $X$ with a framed Aganagic-Vafa outer brane $(L,f)$. We further compare the toric geometry and topology of the three spaces. In general, we use notations with hat ($\hat{\phantom{c}}$) while discussing $(Y,D)$ and its relative Gromov-Witten invariants, and notations with tilde ($\tilde{\phantom{c}}$) while discussing $\tX$ and its closed Gromov-Witten invariants.

\subsection{Notations}\label{sect:ToricNotations}
We work over $\bC$. We introduce some notations for a general $r$-dimensional smooth toric variety $Z$ specified by a fan $\Xi$ in $\bR^r$. We refer to \cite{CLS11, Fulton93} for the general theory of toric varieties.
\begin{itemize}
    \item For each $d= 0, \dots, r$, let $\Xi(d)$ be the set of $d$-dimensional cones in $\Xi$.

    \item For each $\sigma \in \Xi(d)$, let $V(\sigma) \subseteq Z$ denote the codimension-$d$ $(\bC^*)^r$-orbit closure in $Z$ corresponding to $\sigma$, and $i_\sigma: V(\sigma) \to Z$ denote the inclusion map. For $\sigma \in \Xi(r)$, denote $p_\sigma := V(\sigma)$ as the corresponding $(\bC^*)^r$-fixed point. For $\tau \in \Xi(r-1)$, denote $l_\tau:= V(\tau)$ as the corresponding $(\bC^*)^r$-invariant line.

    \item Define $\Xi(r-1)_c := \{\tau \in \Xi(r-1): l_\tau \mbox{ is compact}\}$. Moreover, define $Z^1_c := \bigcup_{\tau \in \Xi(r-1)_c} l_\tau$ as the toric 
    \emph{1-skeleton}\footnote{When $Z$ is non-compact, the toric 1-skeleton $Z^1_c$ in this paper is a subset of the 1-skeleton $Z^1=\bigcup_{\tau \in \Xi(r-1)} l_\tau$ defined in \cite[Section 4.2]{Liu13}.}  of $Z$.

    \item A \emph{flag} in $\Xi$ is a pair $(\tau, \sigma) \in \Xi(r-1) \times \Xi(r)$ where $\tau$ is a face of $\sigma$. Let $F(\Xi)$ denote the set of flags in $\Xi$.
\end{itemize}

\subsection{Open geometry of \texorpdfstring{$(X,L,f)$}{(X,L,f)}}\label{sect:NotationsX}
Let $N \cong \bZ^3$. Let $X$ be a smooth toric Calabi-Yau 3-fold, which is specified by a fan $\Sigma$ in $N_\bR := N \otimes \bR$. We introduce the following additional notations to describe the toric geometry of $X$:
\begin{itemize}
    \item Let $T:= N \otimes \bC^* \cong (\bC^*)^3$ be the algebraic torus contained in $X$ as an open dense subset. Let $M:= \Hom(N, \bZ)$, which can be canonically identified as the character lattice $\Hom(T, \bC^*)$ of $T$.

    \item Let $\Sigma(1) = \{\rho_1, \dots, \rho_R\}$. For each $i = 1, \dots, R$, let $b_i \in N$ be the primitive lattice vector on the ray $\rho_i$, i.e. $\rho_i \cap N = \bZ_{\ge 0}b_i$. Define
    $$
        \varphi: \bigoplus_{i = 1}^R \bZ e_i  \cong \bZ^R \to N, \qquad e_i \mapsto b_i.
    $$
    Then there is a short exact sequence of lattices
    \begin{equation}\label{eqn:FanSESX}
        \xymatrix{
            0 \ar[r] & \bL \ar[r]^\psi & \bZ^R \ar[r]^\varphi & N \ar[r] & 0.
        }
    \end{equation}
    Fix a $\bZ$-basis $\{\ep_1, \dots, \ep_{R-3}\}$ for $\bL:= \ker(\varphi) \cong \bZ^{R-3}$. For $a = 1, \dots, R-3$, define
    $$
        l^{(a)} = (l_1^{(a)}, \dots, l_R^{(a)}):= \psi(\ep_a) \in \bZ^{R},
    $$
    which are known as \emph{charge vectors} in the physics literature. They define a linear action of $G=(\bC^*)^{R-3}$ on $\bC^R$ by 
    \begin{equation}\label{eqn:G-action} 
    (s_1,\ldots,s_{R-3}) \cdot (x_1,\ldots,x_R) = \left(\prod_{a=1}^{R-3} s_a^{l_1^{(a)}} x_1,\ldots, \prod_{a=1}^{R-3} s_a^{l_R^{(a)}} x_R \right)
    \end{equation} 
    where $(s_1,\ldots,s_{R-3}) \in G$ and $(x_1,\ldots,x_R) \in \bC^R$. 

    \item For each cone $\sigma$ in $\Sigma$, define
        $$
            I_\sigma' = \{i \in \{1, \dots, R\} : \rho_i \subseteq \sigma\}, \qquad I_\sigma = \{1, \dots, R\} \setminus I_\sigma'.
        $$
        Define a monomial 
        $$
        \hat{x}_\sigma := \prod_{i\in I_\sigma} x_i \in \bC[x_1,\ldots,x_R].
        $$
        The set
        $$
        \cA(\Sigma) := \{ I_\sigma \mid \sigma\in \Sigma\}
        $$
        is the set of anti-cones determined by the fan $\Sigma$.
   \item  Let $B(\Sigma)$ be the ideal in $\bC[x_1,\ldots,x_R]$ generated by 
   $\{ \hat{x}_\sigma: \sigma \in \Sigma\}$, and let $Z(\Sigma)$ be the closed subvariety of 
   $$
   \bC^R =\mathrm{Spec} \text{ } \bC[x_1,\ldots,x_R]
   $$ 
   defined by the ideal $B(\Sigma)$.  Then $G$ acts freely on $\bC^R \setminus Z(\Sigma)$ and 
   \begin{equation}\label{eqn:quotient} 
   X = (\bC^R \setminus Z(\Sigma))/G. 
   \end{equation}
  \end{itemize}

\begin{assumption}\rm{
We assume that $X$ is \emph{Calabi-Yau}: the canonical bundle $K_X$ of $X$ is trivial. 
}\end{assumption}
This implies that there exists $u_3 \in M$ such that $\inner{u_3, b_i} = 1$ for all $i = 1, \dots, R$, where $\inner{-,-}: M \times N \to \bZ$ is the natural pairing. In other words, each $b_i$ belongs to the hyperplane $N' \times \{1\}$, where $N':=\ker(u_3) \cong \bZ^2$.

Let $P\subset N'_\bR \cong\bR^2$ be the cross section of the support $|\Sigma|$ of the fan in the hyperplane $N_\bR'\times\{1\}$. Then $|\Sigma|$ is a cone over $P$, and the
 fan $\Sigma$ determines a triangulation 
of $P$: the faces, edges, and vertices in the triangulation are in one-to-one correpondence with the 3-cones, 2-cones, and 1-cones in the fan $\Sigma$. 
In this paper we assume that all the maximal cones in $\Sigma$ are 3-dimensional, and that $P$ is a simple polygon, but we do not assume $P$ is convex.  If $P$ is convex then 
$X$ is semi-projective and \eqref{eqn:quotient} is a GIT quotient.  In general,  there is an toric open embedding
$X \hookrightarrow X'$ where $X'$ is a semi-projective smooth toric Calabi-Yau 3-fold defined by a fan $\Sigma'$ such that $\Sigma(1)=\Sigma'(1)$, $\Sigma \subset \Sigma'$. 
Indeed, since $\Sigma(1)=\Sigma'(1)$, $\Sigma$ and $\Sigma'$ determine the same charge vectors, and the open embedding $X\hookrightarrow X'$ is induced by the $G$-equivariant open embedding
 $\bC^R\setminus Z(\Sigma)  \hookrightarrow   \bC^R\setminus Z(\Sigma')$. (Note that $Z(\Sigma')\subset Z(\Sigma)$ since $\Sigma\subset \Sigma'$.)

 \begin{example} \rm{
$R=4$, 
 $b_1=(1,0,1)$, $b_2=(0,1,1)$, $b_3=(-1,-1,1)$, $b_4 = (0,0,1)$. Consider the two fans $\Sigma$ and $\Sigma'$ given in Figure \ref{fig:ExampleFans}. There is only one charge vector $(1,1,1,-3)\in \bZ^4$ which determines a $\bC^*$-action on $\bC^4$. 
 $$
 \begin{aligned}
 X_\Sigma =& \left( \bC^4 \setminus \{ x_1=x_2=0\}  \right) /\bC^* =\mathrm{Tot} \left( \cO_{\bP^1}(1)\oplus \cO_{\bP^2}(-3)\right),\\
 X_{\Sigma'} =& \left(\bC^4 \setminus \{ x_1=x_2= x_3=0\} \right)/\bC^* =\mathrm{Tot}\left(\cO_{\bP^2}(-3)\right).
 \end{aligned} 
 $$
 $X_{\Sigma'}$ is a semi-projective partial compactification of $X_{\Sigma}$ and is a GIT quotient.}
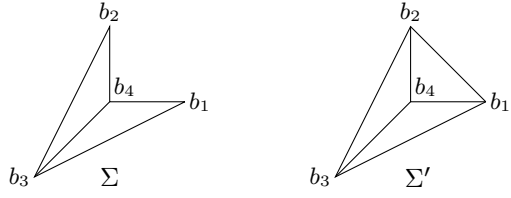
\begin{figure}[h]
 \begin{tikzpicture}
 \draw (1,0) -- (0,0) -- (0,1) -- (-1,-1) -- (1,0);
 \draw (0,0) -- (-1,-1);
 \draw (5,0) -- (4,1) -- (3,-1) -- (5,0); 
 \draw (5,0) -- (4,0) -- (4,1);
 \draw (4,0) -- (3,-1);  
 \node at (0,-1) {$\Sigma$};
 \node at (4.1,-1) {$\Sigma'$}; 

 \node at (1.2,0) {\small $b_1$};
 \node at (5.2,0) {\small $b_1$};
 \node at (0,1.2) {\small $b_2$};
 \node at (4,1.2) {\small $b_2$};
 \node at (-1.2, -1) {\small $b_3$}; 
 \node at (2.8,-1) {\small $b_3$}; 
 \node at (0.2,0.2) {\small $b_4$};
 \node at (4.2,0.2) {\small $b_4$};
 \end{tikzpicture}
 \caption{$\Sigma'$ contains $\Sigma$ as a subfan and its support is a cone over a convex polygon.}
\label{fig:ExampleFans}
 \end{figure} 

\end{example}

We now define the Aganagic-Vafa brane $L$. Let $X'$ be a semi-projective smooth toric Calabi-Yau 3-fold defined by a fan $\Sigma'$ with $\Sigma'(1) = \Sigma(1)$, which partially compactifies $X$. Then $X'$ is a GIT quotient $(\bC^R \setminus Z(\Sigma'))/G$. Moreover, $X'$ can be described as a symplectic quotient as follows: Let $G_{\bR} \cong U(1)^{R-3}$ be the maximal compact subgroup of $G$. The dual $\fg_{\bR}^*$ of the Lie algebra $\fg_{\bR}$ of $G_{\bR}$ can be canonically identified with $H^{1,1}(X'; \bR) \cong \bL_{\bR}^{\vee} \cong \bR^{R-3}$. The $G$-action \eqref{eqn:G-action} induces a Hamiltonian $G_{\bR}$-action on $\bC^R$, and let $\mu: \bC^{\bR} \to \fg_{\bR}^*$ be the moment map, which is given by
$$
    \mu(x_1, \dots, x_R) = \left(\sum_{i = 1}^R l_i^{(1)} |x_i|^2, \dots, \sum_{i = 1}^R l_i^{(R-3)} |x_i|^2 \right).
$$
Let $r = (r_1, \dots, r_{R-3}) \in H^{1,1}(X';\bR)$ be a K\"ahler class. Then
$$
    X' = \mu^{-1}(r)/G_{\bR},
$$
and the standard K\"ahler form
$$
    \frac{\sqrt{-1}}{2} \sum_{i = 1}^R dx_i \wedge d \bar{x}_i
$$
on $\bC^R$ descends to a K\"ahler form $\omega_r$ on $X'$ in the class $r$. Moreover, $\omega_r$ restricts to a symplectic form on $X$.

Write $x_i = |x_i|e^{\sqrt{-1}\phi_i}$. An Aganagic-Vafa brane \cite{AV00} $L$ in $X'$ is a Lagrangian submanifold defined by
$$
    \left\{ (x_1, \dots, x_R) \in \mu^{-1}(r) : \sum_{i = 1}^R l_i' |x_i|^2 = c', \sum_{i = 1}^R l_i'' |x_i|^2 = c'', \sum_{i = 1}^R \phi_i = \text{const}      \right\},
$$
where $c', c'' \in \bR$ are constants and $l' = (l_1', \dots, l_R'), l'' = (l_1'', \dots, l_R'') \in \bZ^{R}$ satisfy
$$
    \sum_{i = 1}^R l_i' = \sum_{i=1}^R l_i'' = 0.
$$
The submanifold $L$ is diffeomorphic to $S^1 \times \bC$ and intersects a unique $T$-invariant line $l_{\tau_0}$ in $X'$, where $\tau_0 \in \Sigma'(2)$. We make the following assumptions on $L$: 
\begin{assumption}\label{assump:AVBrane}
\rm{
With $L$ as above, we require that:
    \begin{itemize}
        \item $L \subset X$. This implies that $\tau_0 \in \Sigma(2)$.
        \item $\tau_0 \not \in \Sigma'(2)_c$, i.e. $L$ is an \emph{outer} brane in $X'$. This implies that $\tau_0 \not \in \Sigma(2)_c$ and $L$ is also an outer brane in $X$.
    \end{itemize}
}\end{assumption}

Let $\sigma_0 \in \Sigma(3)$ be the unique 3-cone that contains $\tau_0$. By permuting the indices if necessary, we assume that $I_{\sigma_0}' = \{1,2,3\}$, with $b_1, b_2, b_3$ appearing in $N' \times \{1\}$ in the counterclockwise order, and $I_{\tau_0}' = \{2,3\}$. Let $\tau_2, \tau_3 \in \Sigma(2)$ be the other two facets of $\sigma_0$ such that $I_{\tau_2}' = \{1,3\}$, $I_{\tau_3}' = \{1,2\}$. See Figure \ref{fig:AVBraneCones}.

\begin{figure}[h]
\begin{tikzpicture}
\draw (1,0) -- (0,1) -- (0,0) -- (1,0); 
\node[right] at (1,0) {\small $b_1$};
\node[above] at (0,1) {\small $b_2$};
\node at (-0.15,-0.15) {\small $b_3$};
\node at (0.3, 0.3) {\small $\sigma_0$};
\node[left] at (0,0.5) {\small $\tau_0$}; 
\node[below] at (0.5,0) {\small $\tau_2$};
\node at  (0.7,0.7) {\small $\tau_3$};
\end{tikzpicture}
\caption{Distinguished cones associated to the Aganagic-Vafa brane $L$.}
\label{fig:AVBraneCones}
\end{figure}
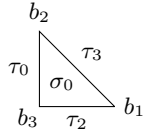

The flag $(\tau_0, \sigma_0)$ determines a $\bZ$-basis $\{v_1, v_2, v_3\}$ for $N$ under which
$$
    b_1 = (1,0,1), \qquad b_2 = (0,1,1), \qquad b_3 = (0,0,1).
$$
For $i = 1, \dots, R$, let $m_i, n_i \in \bZ$ such that $b_i = (m_i, n_i, 1)$ under the basis $\{v_1, v_2, v_3\}$. By Assumption \ref{assump:AVBrane}, $m_i \geq 0 $ for all $i$. We take $u_1, u_2 \in M$ such that $\{u_1, u_2, u_3\}$ is the $\bZ$-basis for $M$ dual to $\{v_1, v_2, v_3\}$. We have
$$
    H^*_T(\pt) = \bZ[\su_1, \su_2, \su_3].
$$
Let $T':= \ker(\su_3) \subset T$ be the 2-dimensional Calabi-Yau subtorus and $T'_\bR$ be the maximal compact subtorus of $T'$. Then $L$ is preserved under the $T'_\bR$-action.

Finally, let $f \in \bZ$ be a \emph{framing} on $L$. We will assume that the choice of $f$ is generic (see Assumption \ref{assump:GenFraming}).

\subsection{Relative geometry of \texorpdfstring{$(Y,D)$}{(Y,D)}}\label{sect:RelCons}
The smooth toric 3-fold $Y$ is defined by a fan $\hSi$ in $N_\bR$ specified as follows:
\begin{itemize}
    \item $\hSi(1) = \Si(1) \sqcup \{\rho_{R+1}\}$, where $\rho_{R+1} = \bR_{\ge 0} b_{R+1}$ and under the basis $\{v_1, v_2, v_3\}$ of $N$,
    $$
        b_{R+1} = (-1, -f, 0) \in N.
    $$
    Similar to above, for each cone $\hsi$ in $\hSi$, we define
        $$
            I_{\hsi}' = \{i \in \{1, \dots, R+1\} : \rho_i \subseteq \hsi\}, \qquad I_{\hsi} = \{1, \dots, R+1\} \setminus I_{\hsi}'.
        $$

    \item $\hSi(3) = \Si(3) \sqcup \{\hsi_0\}$, where $I_{\hsi_0}' = \{2,3, R+1\}$.

    \item $\hSi(2) = \Si(2) \cup \{\htau_2, \htau_3\}$, where $I_{\htau_2}' = \{3, R+1\}$, $I_{\htau_3}' = \{2, R+1\}$.
\end{itemize}
We define $D:= V(\rho_{R+1})$ as the toric divisor corresponding to the additional ray $\rho_{R+1}$. We have the following observations on the relation between $(Y,D)$ and $X$:
\begin{itemize}
    \item We have $Y \setminus D = X$. The pair $(Y,D)$ is log Calabi-Yau: $\bigwedge^3 \Omega_Y(\log D) \cong \cO_Y$.

    \item We define
    $$
        \hvphi: \bigoplus_{i = 1}^{R+1} \bZ e_i  \cong \bZ^{R+1} \to N, \qquad e_i \mapsto b_i,
    $$
    which restricts to $\varphi$ on $\bigoplus_{i = 1}^R \bZ e_i$. Then there is a short exact sequence of lattices
    \begin{equation}\label{eqn:FanSESY}
        \xymatrix{
            0 \ar[r] & \hbL \ar[r]^\hpsi & \bZ^{R+1} \ar[r]^\hvphi & N \ar[r] & 0.
        }
    \end{equation}
    We can view $\bL$ in \eqref{eqn:FanSESX} as a sublattice of $\hbL:= \ker(\hvphi) \cong \bZ^{R-2}$. We can pick $\ep_{R-2} \in \hbL$ such that $\{\ep_1, \dots, \ep_{R-2}\}$ is a $\bZ$-basis for $\hbL$ and we have
    $$
        \hl^{(a)} := \hpsi(\ep_a) = (l_1^{(a)}, \dots, l_R^{(a)},0) \qquad \mbox{for $a = 1,\dots, R-3$},
    $$
    $$
        \hl^{(R-2)}:= \hpsi(\ep_{R-2}) = (1, f, -f-1, 0, \dots, 0, 1).
    $$
    Similar to \eqref{eqn:quotient}, $Y$ can be expressed as a quotient $(\bC^{R+1} \setminus Z(\hSi))/(\bC^*)^{R-2}$ where the linear action of $(\bC^*)^{R-2}$ on $\bC^{R+1}$ is specified by the charge vectors $\hl^{(1)}, \dots, \hl^{(R-2)}$.
    
    \item $Y$ also contains the torus $T$ as an open dense subset.

    \item We have $\hSi(2)_c = \Sigma(2)_c \sqcup \{\tau_0\}$ and $Y^1_c = X^1_c \cup l_{\tau_0}$. Here $l_{\tau_0}$ is the (compact) $T$-orbit closure in $Y$. Moreover, $F(\hSi) = F(\Sigma) \sqcup \{(\tau_0, \hsi_0), (\htau_2, \hsi_0), (\htau_3, \hsi_0)\}$.
\end{itemize}

\subsection{Local geometry of \texorpdfstring{$\tX$}{X}}\label{sect:4FoldCons}
Let $\tN:= N \oplus \bZ v_4 \cong \bZ^4$ be a lattice that contains $N$ as a sublattice. The smooth toric Calabi-Yau 4-fold $\tX$ is defined by a fan $\tSi$ in $\tN_\bR:= \tN \otimes \bR$ specified as follows:
\begin{itemize}
    \item $\tSi(1) = \{\trho_1, \dots, \trho_{R+2}\}$, where $\trho_i = \bR_{\ge 0}\tb_i$ and under the basis $\{v_1, \dots, v_4\}$ of $\tN$,
    $$
        \tb_i = (m_i, n_i, 1, 0)\in \tN \qquad \mbox{for $i = 1, \dots, R$},
    $$
    $$
        \tb_{R+1} = (-1, -f, 1, 1), \qquad \tb_{R+2} = (0,0,1,1).
    $$
    Similar to above, for each cone $\tsi$ in $\tSi$, we define
        $$
            I_{\tsi}' = \{i \in \{1, \dots, R+2\} : \trho_i \subseteq \tsi\}, \qquad I_{\tsi} = \{1, \dots, R+1\} \setminus I_{\tsi}'.
        $$

    \item There is a bijection $\iota: \hSi(3) \to \tSi(4)$ such that for each $\hsi \in \hSi(3)$, 
    $$
        I_{\iota(\hsi)}' = I_{\hsi}' \sqcup \{R+2\}.
    $$
    \item $\tSi(2)$ and $\tSi(3)$ consists of faces of cones in $\tSi(4)$. Thus, similar to above, we have injective maps $\iota: \hSi(d) \to \tSi(d+1)$ for $d = 0, \dots, 3$ such that for each $\hsi \in \hSi(d)$,
    $$
        I_{\iota(\hsi)}' = I_{\hsi}' \sqcup \{R+2\}.
    $$
    Moreover, for $d = 1, 2, 3$, we have $\tSi(d) = \iota(\hSi(d-1)) \sqcup \hSi(d)$, where a cone $\hsi \in \hSi(d)$ can be viewed as a cone in $\tSi(d)$ via the inclusion $N \to \tN$.
\end{itemize}
We observe that $\tX$ is related to $X$ and $(Y,D)$ in the following ways:
\begin{itemize}
    \item $\tX$ is isomorphic to the total space of the line bundle
    $$
        \cO_Y(-V(\rho_1)- \cdots - V(\rho_{R})- D) \cong \cO_Y(-D)
    $$
    over $Y$, which restricts to $\cO_X$ over $X$. The projection $\tX \to Y$ is induced by the lattice map 
    $$
        \tN \to N, \qquad v_1 \mapsto v_1, \quad v_2 \mapsto v_2, \quad v_3 \mapsto v_3, \quad v_4 \mapsto -v_3
    $$
    whose kernel is generated by $\tb_{R+2}$. In particular, $\tX$ is Calabi-Yau.

    \item The inclusion $\iota: \hSi(2) \to \tSi(3)$ restricts to a bijection between $\hSi(2)_c$ and $\tSi(3)_c$, which induces an isomorphism $Y^1_c \cong \tX^1_c$.

    \item The injective maps $\iota$ induce an inclusion $\iota: F(\hSi) \to F(\tSi)$, $(\htau, \hsi) \mapsto (\iota(\htau), \iota(\hsi))$. In fact, we have
    \begin{equation}\label{eqn:FlagDecomp}
        F(\tSi) = \iota(F(\hSi)) \sqcup \{ (\hsi, \iota(\hsi)) : \hsi \in \hSi(3)\}.
    \end{equation}

    \item We define
    $$
        \tvphi: \bigoplus_{i = 1}^{R+2} \bZ e_i  \cong \bZ^{R+2} \to \tN, \qquad e_i \mapsto \tb_i,
    $$
    which restricts to $\varphi$ on $\bigoplus_{i = 1}^R \bZ e_i$. Then there is a short exact sequence of lattices
    $$
        \xymatrix{
            0 \ar[r] & \tbL \ar[r]^\tpsi & \bZ^{R+2} \ar[r]^\tvphi & \tN \ar[r] & 0.
        }
    $$
    We can identify $\tbL:= \ker(\tvphi) \cong \bZ^{R-2}$ with $\hbL$ in \eqref{eqn:FanSESY} in a way that
    $$
        \tl^{(a)} := \tpsi(\ep_a) = (l_1^{(a)}, \dots, l_R^{(a)},0,0) \qquad \mbox{for $a = 1,\dots, R-3$},
    $$
    $$
        \tl^{(R-2)}:= \tpsi(\ep_{R-2}) = (1, f, -f-1, 0, \dots, 0, 1,-1).
    $$
    Similar to \eqref{eqn:quotient}, $\tX$ can be expressed as a quotient $(\bC^{R+2} \setminus Z(\tSi))/(\bC^*)^{R-2}$ where the linear action of $(\bC^*)^{R-2}$ on $\bC^{R+2}$ is specified by the charge vectors $\tl^{(1)}, \dots, \tl^{(R-2)}$.

    \item Let $\tT:= \tN \otimes (\bC^*) \cong (\bC^*)^4$ be the algebraic torus contained in $\tX$ as an open dense subset. Let $\tM:= \Hom(\tN, \bZ)$, which can be canonically identified with the character lattice $\Hom(\tT, \bC^*)$ of $\tT$, and let $\{u_1, \dots, u_4\}$ be the $\bZ$-basis of $\tM$ dual to $\{v_1, \dots, v_4\}$. (We abuse notation here since $u_1, u_2, u_3 \in \tM$ maps to $u_1, u_2, u_3 \in M$ respectively under the natural map $\tM \to M$.) Then $T = \ker(u_4) \subset \tT$. We have
    $$
        H^*_{\tT}(\pt) = \bZ[\su_1, \su_2, \su_3, \su_4].
    $$
    The Calabi-Yau condition for $\tX$ is manifested in that $\inner{u_3, \tb_i} = 1$ for all $i = 1, \dots, R+2$, where $\inner{-,-}: \tM \times \tN \to \bZ$ is the natural pairing. Let $\tT' := \ker(u_3) \subset \tT$ be the 3-dimensional Calabi-Yau subtorus.
\end{itemize}

\subsection{Second homology groups}\label{sect:Homology}
With the choice of the Aganagic-Vafa brane $L$ in $X$, there is a split short exact sequence
$$
    \xymatrix{
        0 \ar[r] & H_2(X;\bZ) \ar[r] & H_2(X,L;\bZ) \ar[r]^\partial & H_1(L;\bZ) \ar[r] & 0.
    }
$$
The intersection $L \cap V(\tau_0)$ bounds a holomorphic disk $B$ in $V(\tau_0)$, which represents a class in $H_2(X,L;\bZ)$. We orient $B$ by the holomorphic structure of $X$. Then $H_1(L;\bZ) \cong \bZ \partial[B]$, and
$$
    H_2(X,L;\bZ) \cong H_2(X;\bZ) \oplus \bZ [B].
$$

Under the inclusion $\Sigma(2)_c \subset \hSi(2)_c$, there is an natural injection
$$
    H_2(X;\bZ) \to H_2(Y;\bZ), \qquad [l_{\tau}] \mapsto [l_{\tau}] \quad \mbox{for all } \tau \in \Sigma(2)_c,
$$
which can be extended to an isomorphism
$$
    \xymatrix{
        H_2(X,L;\bZ) \ar[r]^\cong & H_2(Y;\bZ)
    }
$$
if $[B]$ is mapped to the class of $l_{\tau_0}$ in $H_2(Y;\bZ)$. Moreover, under the bijection $\iota: \hSi(2)_c \to \tSi(3)_c$, the natural isomorphism
$$
    \xymatrix{
        H_2(Y;\bZ) \ar[r]^\cong & H_2(\tX;\bZ),
    }
    \qquad [l_{\htau}] \mapsto [l_{\iota(\htau)}] \quad \mbox{for all } \htau \in \hSi(2)_c,
$$
     identifies effective curve classes of $Y$ and $\tX$. If $\hbeta \in H_2(Y;\bZ)$ and $\tbeta \in H_2(\tX;\bZ)$ are corresponding effective classes, and $\tD:= V(\trho_{R+1})$, then
$$
    \hbeta \cdot D = \tbeta \cdot \tD.
$$

\section{Gromov-Witten invariants and localization computations}\label{sect:GW}
In this section, we give the definitions of the three types of Gromov-Witten invariants involved in our correspondences and compute them by virtual localization \cite{B97, GP99}. 
Details of localization computations of equivariant Gromov-Witten invariants of a general toric variety can be found in  \cite{Liu13}, which we follow closely. 

\subsection{Decorated graphs}\label{sect:DecGraphs}
We start by recalling the definition of decorated graphs, following \cite{Liu13}, which will index the components of the torus fixed locus of our moduli spaces of stable maps. We restrict our attention to the genus zero case. As in Section \ref{sect:ToricNotations}, let $Z$ be an $r$-dimensional smooth toric variety specified by a fan $\Xi$. 

\begin{definition} \rm{
Let $n \in \bZ_{\ge 0}$ and $\beta \in H_2(Z;\bZ)$ be an effective curve class. A genus-zero, $n$-pointed, degree-$\beta$ \emph{decorated graph} for $Z$ is a tuple $\vGa = (\Gamma, \vf, \vd, \vs)$, where:
\begin{itemize}
    \item $\Gamma$ is a compact, connected 1-dimensional CW complex. Let $V(\Gamma)$ denote the set of vertices in $\Gamma$, $E(\Gamma)$ denote the set of edges in $\Gamma$, and
    $$
        F(\Gamma):= \{(e,v) \in E(\Gamma) \times V(\Gamma): v \in e\}
    $$
    be the set of \emph{flags} in $\Gamma$.

    \item $\vf: V(\Gamma) \cup E(\Gamma) \to \Xi(r) \cup \Xi(r-1)_c$ is the \emph{label map} that sends each vertex $v \in V(\Gamma)$ to some $r$-dimensional cone $\sigma_v \in \Xi(r)$ and each edge $e \in E(\Gamma)$ to some $(r-1)$-dimensional cone $\tau_e \in \Xi(r-1)$. Moreover, $\vf$ induces a map $F(\Gamma) \to F(\Xi)$: for each $(e,v) \in F(\Gamma)$, we require that $(\tau_e, \sigma_v) \in F(\Xi)$.

    \item $\vd: E(\Gamma) \to \bZ_{>0}$ is the \emph{degree map}. We denote $d_e:= \vd(e)$ for each $e \in E(\Gamma)$.

    \item $\vs:\{1, \dots, n\} \to V(\Gamma)$ is the \emph{marking map}, defined if $n>0$.
\end{itemize}
such that the following are satisfied:
\begin{itemize}
    \item $\Gamma$ is a tree: $\displaystyle |E(\Gamma)| - |V(\Gamma)| +1 = 0$.
    \item $\displaystyle \sum_{e \in E(\Gamma)} d_e[l_{\tau_e}] = \beta$.
\end{itemize}
}\end{definition} 

Given $n \in \bZ_{\ge 0}$ and effective curve class $\beta \in H_2(Z;\bZ)$, let $\Gamma_{0,n}(Z, \beta)$ be the set of all genus-zero, $n$-pointed, degree-$\beta$ decorated graphs for $Z$. The set $\Gamma_{0,n}(Z, \beta)$ indexes the connected components of the $(\bC^*)^r$-fixed locus of $\Mbar_{0,n}(Z, \beta)$, the moduli space of genus-zero, degree-$\beta$ stable maps to $Z$ with $n$ marked points. For a stable map
$$
    u:(C, x_1, \dots, x_n) \to Z
$$
representing a point $[u] \in \Mbar_{0,n}(Z, \beta)^{(\bC^*)^r}$, the decorated graph $\vGa = (\Gamma, \vf, \vd, \vs)$ that indexes the connected component containing $[u]$ can be described as follows: 
\begin{itemize}
    \item The image of $u$ lies in $Z^1_c \subset Z$. 
   The set $V(\Gamma)$ of vertices in $\Gamma$ are in one-to-one correspondence with the set of connected components of $u^{-1}(Z^{(\bC^*)^r})$.
    Let $C_v$ denote the connected component associated to a vertex $v\in V(\Gamma)$. Then
    $u(C_v)$ is a $(\bC^*)^r$-fixed point $p_{\sigma_v}$, where $\sigma_v \in \Xi(r)$. We set $\vf(v) = \sigma_v$. 

    \item  The set $E(\Gamma)$ of edges in $\Gamma$ are in one-to-one correspondence with the set of irreducible components of $C$ that do not map constantly to $Z$. 
    Let $C_e$ denote the irreducible component associated to an edge $e \in E(\Gamma)$. Then  $C_e$ is isomorphic to $\bP^1$ and $u|_{C_e}$ is a degree-$d_e$ cover
    of $l_{\tau_e}\cong \bP^1$  fully ramified over the two $(\bC^*)^r$-fixed points in $l_{\tau_e}$, where $d_e\in \bZ_{>0}$ and $\tau_e\in \Xi(r-1)_c$. We set $\vf(e) = \tau_e$ and $\vd(e)=d_e$. 
    
    \item $F(\Gamma)$ is the set of pairs $(e,v)\in E(\Gamma)\times V(\Gamma)$ such that $C_e \cap C_v$ is non-empty.  

    \item For each marked point $x_i$, we set  $\vs(i)=v$ if $x_i\in C_v$. 
\end{itemize}

Given a decorated graph $\vGa \in \Gamma_{0,n}(Z, \beta)$, we introduce the following notations:
\begin{itemize} 
    \item For each $v \in V(\Gamma)$, define $E_v := \{e \in E(\Gamma): (e,v) \in F(\Gamma)\}$ and $S_v := \vs^{-1}(v)$. Define $\val(v) := |E_v|$ and $n_v := |S_v|$.

    \item Let
    $$
        V^S(\Gamma):= \{ v \in V(\Gamma): \val(v) - 2 + n_v >0\}
    $$ 
    be the set of \emph{stable} vertices. The unstable vertices are partitioned into three subsets:
    \begin{align*}
        V^1(\Gamma) &:= \{v \in V(\Gamma): \val(v) = 1, n_v = 0\}\\
        V^{1,1}(\Gamma) &:= \{v \in V(\Gamma): \val(v) = 1, n_v = 1\}\\
        V^2(\Gamma) &:= \{v \in V(\Gamma): \val(v) = 2, n_v = 0\}
    \end{align*}

    \item Let $\Aut(\vGa)$ denote the group of \emph{automorphisms} of $\vGa$, i.e. automorphisms of the graph $\Gamma$ that make the label maps $\vf$, $\vd$, $\vs$ invariant.
\end{itemize}
The connected component of $\Mbar_{0,n}(Z, \beta)^{(\bC^*)^r}$ indexed by $\vGa$ can be identified as a quotient $[\cM_{\vGa}/A_{\vGa}]$. Here, 
$$
    \cM_{\vGa} := \prod_{v \in V^S(\Gamma)} \Mbar_{0, E_v \cup S_v}
$$
where $\Mbar_{0, E_v \cup S_v}$ is the moduli space $\Mbar_{0, \val(v)+n_v}$ with the marked points labeled by $E_v \cup S_v$; $A_{\vGa}$ is the automorphism group of a point in $\cF_{\vGa}$, which fits into an exact sequence
$$
    1 \to \prod_{e \in E(\Gamma)} \bZ_{d_e} \to A_{\vGa} \to \Aut(\vGa) \to 1.
$$

\begin{remark}\rm{
For our toric Calabi-Yau 3-fold $X$ (resp. 4-fold $\tX$), we will consider the action of the Calabi-Yau subtorus $T' \subset T$ (resp. $\tT' \subset \tT$) on the moduli spaces of stable maps. Since $X^{T'} = X^T$ (resp. $\tX^{\tT'} = \tX^{\tT}$), the $T'$-fixed (resp. $\tT'$-fixed) loci of the moduli spaces of stable maps can be identified with the $T$-fixed (resp. $\tT$-fixed) loci, and can thus be characterized by decorated graphs in the same way as above.
}\end{remark}

\subsection{Weights of Calabi-Yau tori}
We compare the weights of the actions of the Calabi-Yau tori $T'$ and $\tT'$ at the fixed points in $X$, $Y$, and $\tX$. We have $T' = \ker(u_4) \subset \tT'$, and
$$
    H_{T'}^*(\pt) = \bZ[\su_1, \su_2], \qquad H_{\tT'}^*(\pt) = \bZ[\su_1, \su_2, \su_4].
$$
Given $(\htau, \hsi) \in F(\hSi)$, we define
$$
    \bw(\htau,\hsi) := c_1^{T'}(T_{p_{\hsi}}l_{\htau}) \in H^2_{T'}(\pt),
$$
which is the weight of the $T'$-action on the tangent line $T_{p_{\hsi}}l_{\htau}$ to $l_{\htau}$ at the fixed point $p_{\hsi}$ in $Y$. In the case $(\htau, \hsi) \in F(\Sigma)$, that is $\hsi \neq \hsi_0$, $\bw(\htau, \hsi)$ is also the weight of the $T'$-action on the tangent line $T_{p_{\hsi}}l_{\htau}$ to $l_{\htau}$ at the fixed point $p_{\hsi}$ in $X$. By our choice of the bases $\{v_1, v_2, v_3\}, \{u_1, u_2, u_3\}$ based on the flag $(\tau_0, \sigma_0)$ (see Section \ref{sect:NotationsX}), we have
\begin{equation}\label{eqn:Sigma0Wts}
    \begin{aligned}
        &\bw(\tau_0, \sigma_0)  = \su_1, &&\bw(\tau_2, \sigma_0) = \su_2, && \bw(\tau_3, \sigma_0) = -\su_1 - \su_2,\\
        &\bw(\tau_0, \hsi_0) = -\su_1, &&\bw(\htau_2, \hsi_0) = -f\su_1 + \su_2, && \bw(\htau_3, \hsi_0) = f\su_1 - \su_2.
    \end{aligned}
\end{equation}
See Figure \ref{fig:Sigma0Wts}. In particular, the normal bundle $N_{l_{\tau_0}/Y}$ is isomorphic to $\cO(f) \oplus \cO(-f-1)$.

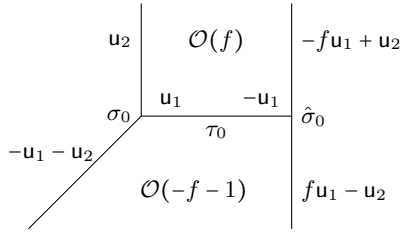
\begin{figure}[h]
\begin{tikzpicture}
\draw (-2.5,-1.5) -- (-1,0) -- (-1,1.5); 
\draw (-1,0) -- (1,0);
\draw (1, 1.5) -- (1, -1.5);
\node[left] at (-1,0) {\small $\sigma_0$};
\node[below] at (0,0) {\small $\tau_0$};
\node[right] at (1,0) {\small $\hsi_0$};
\node[right] at (1,-1) {\small $f\su_1-\su_2$};
\node[right] at (1,1) {\small $-f\su_1+\su_2$};
\node[left] at (-1,1) {\small $\su_2$};
\node[left] at (-1.5, -0.5) {\small $-\su_1 - \su_2$};
\node[above] at (-0.6, 0) {\small $\su_1$};
\node[above] at (0.6, 0) {\small $-\su_1$};
\node at (0, 1) {\small $\cO(f)$};
\node at (-0.3, -1) {\small $\cO(-f-1)$};
\end{tikzpicture}
\caption{$T'$-weights of the distinguished flags, marked on the toric graph.}
\label{fig:Sigma0Wts}
\end{figure}

Similarly, given $(\ttau, \tsi) \in F(\tSi)$, we define
$$
    \tbw(\ttau,\tsi) := c_1^{\tT'}(T_{p_{\tsi}}l_{\ttau}) \in H^2_{\tT'}(\pt),
$$
which is the weight of the $\tT'$-action on the tangent line $T_{p_{\tsi}}l_{\ttau}$ to $l_{\ttau}$ at the fixed point $p_{\tsi}$ in $\tX$. If $(\ttau, \tsi) = \iota(\htau, \hsi)$ for some $(\htau, \hsi) \in F(\hSi)$, then
\begin{equation}\label{eqn:TanWtsEqual}
    \tbw(\ttau, \tsi) \big|_{\su_4 = 0} = \bw(\htau, \hsi).
\end{equation}
(Here we identify $H^*_{T'}(\pt)$ with the subring $\bZ[\su_1, \su_2]$ of $H^*_{\tT'}(\pt)$.) Otherwise, $(\ttau, \tsi) = (\hsi, \iota(\hsi))$ for some $\hsi \in \hSi(3)$ (see \eqref{eqn:FlagDecomp}). In this case, we have
\begin{equation}\label{eqn:Sigma0TanWts}
    \tbw(\ttau, \tsi) = \tbw(\hsi, \iota(\hsi)) = \begin{cases}
        \su_4 & \mbox{if } \hsi \neq \hsi_0\\
        \su_1 + \su_4 & \mbox{if } \hsi = \hsi_0.
    \end{cases}
\end{equation}
We summarize the $\tT'$-weights associated to the cones $\iota(\sigma_0)$ and $\iota(\hsi_0)$ as follows:
\begin{equation}\label{eqn:tSigma0Wts}
    \begin{aligned}
        &\tbw(\iota(\tau_0), \iota(\sigma_0))  = \su_1, &&\tbw(\iota(\tau_2), \iota(\sigma_0)) = \su_2, && \tbw(\iota(\tau_3), \iota(\sigma_0)) = -\su_1 - \su_2-\su_4, && \tbw(\sigma_0, \iota(\sigma_0)) = \su_4,\\
        &\tbw(\iota(\tau_0), \iota(\hsi_0)) = -\su_1, && \tbw(\iota(\htau_2), \iota(\hsi_0)) = -f\su_1 + \su_2, && \tbw(\iota(\htau_3), \iota(\hsi_0)) = f\su_1-\su_2-\su_4, && \tbw(\hsi_0, \iota(\hsi_0)) = \su_1 + \su_4.
    \end{aligned}
\end{equation}
In particular, 
$$
N_{l_{\iota(\tau_0)}/\tX}\cong N_{l_{\tau_0}/Y} \oplus \cO(-1) \cong \cO(f) \oplus \cO(-f-1) \oplus \cO(-1). 
$$
\begin{assumption}\label{assump:GenFraming} \rm{
In this work, we make the assumption that the choice of the framing $f$ is \emph{generic} (with respect to  the effective class $\beta$), i.e. avoiding a finite set of $\bZ$. }
\end{assumption}

\subsection{Disk invariants of \texorpdfstring{$(X,L,f)$}{(X,L,f)}} \label{sec:disk-invariants}
In this section, we recall the definition and computation of disk invariants of $(X,L,f)$ from \cite[Section 3.5]{FL13}. See also \cite{KL01, FLT12}.

\subsubsection{Definition}
Let $\beta' \in H_2(X;\bZ)$ be an effective curve class and $d \in \bZ_{>0}$. Let
$$
    \Mbar(X, L \mid \beta', d)
$$
be the moduli space of morphisms
$$
    u:(C, \partial C) \to (X,L)
$$
where
\begin{itemize}
    \item $C$ is a connected prestable bordered Riemann surface of arithmetic genus $0$ and a single boundary component $\partial C$;

    \item $u_*[C] = \beta' + d[B]$ and $u_*[\partial C] = d\partial[B]$ (see Section \ref{sect:Homology} for the definition of the disk $B$);

    \item The automorphism group of $u$ is finite. Here, an automorphism of $u$ is an automorphism of $(C, \partial C)$ that makes $u$ invariant.
\end{itemize}
$\Mbar(X, L \mid \beta', d)$ is a possibly singular orbifold with corners, equipped with a virtual tangent bundle which is a virtual real vector bundle of rank zero. 
The action of the compact Calabi-Yau torus $T'_\bR \cong U(1)^2$ on $(X, L)$ induces an action on $\Mbar(X, L \mid \beta', d)$
which lifts to an action on its virtual tangent bundle. The fixed locus $\Mbar(X,L\mid\beta', d)^{T'_{\bR}}$ is proper, and each of its connected component
is a compact complex orbifold.

The class-$(\beta', d)$ \emph{disk invariant} of $(X, L,f)$ is defined by
$$
    N_{\beta', d}^{X,L,f}:= \int_{[\Mbar(X, L \mid \beta', d)^{T'_{\bR}}]^\vir}\frac{1}{e_{T'_{\bR}}(N^\vir)} \bigg|_{\su_2 - f\su_1 = 0} \in \bQ
$$
where $N^\vir$ is the virtual normal bundle of $\Mbar(X, L \mid \beta', d)^{T'_{\bR}}$ in $\Mbar(X,L\mid\beta',d)$. Since $f$ is generic (Assumption \ref{assump:GenFraming}), $N_{\beta', d}^{X,L,f}$ is well-defined.

The disk invariants of $(X,L,f)$ are a special class of \emph{open Gromov-Witten invariants} which virtually count stable maps whose domain can have possibly higher genus and more than one boundary components. The open Gromov-Witten invariants of $(X,L,f)$ are first defined through the \emph{formal} relative Gromov-Witten invariants of $(\hat{Y}, \hat{D})$ introduced by Li-Liu-Liu-Zhou \cite{LLLZ09}, where $(\hat{Y}, \hat{D})$ is the formal completion of $(Y,D)$ along the 1-skeleton $Y^1_c$. Fang-Liu \cite{FL13} showed that this definition is equivalent to the one we give above, at least in the case of disk invariants, and in fact that the disk invariants are well-defined for any framing $f \in \bZ$.

\subsubsection{Localization computations} \label{sect:OpenLocalize}
Given a stable map $u:(C, \partial C) \to (X,L)$ that represents a point in $\Mbar(X, L \mid \beta', d)^{T'_{\bR}}$, we can write
$$
    C = C' \cup C'',
$$
where $C'$ is a closed Riemann surface of arithmetic genus $0$ and $C''$ is a disk attached to $C'$ at a node $q$ of $C$ with $\partial C'' = \partial C$. We have that $u|_{C'}:(C', q) \to X$ represents a point in $\Mbar_{0,1}(X, \beta')^{T'}$ and $u|_{C''}: (C'', \partial C) \to (B, B \cap L)$ is a degree-$d$ cover. In particular, $u$ maps $q$ to $p_{\sigma_0}$. Therefore, the connected components of $\Mbar(X, L \mid \beta', d)^{T'_{\bR}}$ are indexed by the decorated graphs
$$
    \Gamma_{0, 1}^0(X, \beta') := \{ \vGa \in \Gamma_{0, 1}(X, \beta') : \vf \circ \vs(1) = \sigma_0\}.
$$
By comparing the perfect obstruction theories of $\Mbar(X, L \mid \beta', d)$ and $\Mbar_{0,1}(X, \beta')$, the proof of Proposition 3.4 of Fang-Liu \cite{FL13} computes $N^{X,L,f}_{\beta', d}$ as follows:

\begin{theorem}[Fang-Liu \cite{FL13}]
For any effective class $\beta' \in H_2(X;\bZ)$ of $X$ and $d \in \bZ_{>0}$,
\begin{equation}\label{eqn:FLDiskComputation}
    N^{X,L,f}_{\beta', d} = (-1)^{fd}\frac{\prod_{k = 1}^{d-1}(fd+k)}{d \cdot d!} \cdot \int_{[\Mbar_{0, 1}(X,\beta')^{T'}]^\vir} \frac{1}{e_{T'}(N^{\vir})}\cdot \frac{\ev^*_1\phi}{(\frac{\su_1}{d})(\frac{\su_1}{d} - \psi_1)}  \bigg|_{\su_2 - f\su_1 = 0},
\end{equation}
where $N^\vir$ is the virtual normal bundle of the $T'$-fixed locus of $\Mbar_{0,1}(X, \beta')$, and $\phi \in H^6_{T'}(X;\bZ)$ is the $T'$-equivariant Poincar\'e dual of the fixed point $p_{\sigma_0}$.
\end{theorem}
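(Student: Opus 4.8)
The plan is to compute $N^{X,L,f}_{\beta',d}$ by $T'_\bR$-localization on $\Mbar(X,L\mid\beta',d)$ and to match it, component by component, with the $T'$-localization of the closed integral on the right-hand side of \eqref{eqn:FLDiskComputation}. First I would make precise the correspondence of fixed loci already indicated above: for $u\in\Mbar(X,L\mid\beta',d)^{T'_\bR}$ the decomposition $C=C'\cup_q C''$ exhibits $u|_{C'}$ as a point of $\Mbar_{0,1}(X,\beta')^{T'}$ with the marked point $q$ forced to $p_{\sigma_0}$, and $u|_{C''}$ as the (essentially unique, $\bZ_d$-automorphic) degree-$d$ cover of $B$ fully ramified over $q$. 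Hence both $\Mbar(X,L\mid\beta',d)^{T'_\bR}$ and the locus of $\Mbar_{0,1}(X,\beta')^{T'}$ on which $\ev_1^*\phi\neq 0$ are indexed by $\vGa\in\Gamma^0_{0,1}(X,\beta')$, with identical coarse fixed spaces $\cM_{\vGa}=\prod_{v\in V^S(\Gamma)}\Mbar_{0,E_v\cup S_v}$ and matching automorphism groups up to the extra $\bZ_d$ from the disk cover.

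Next I would compare the two obstruction theories over a fixed $\cM_{\vGa}$. The deformation--obstruction complex of $u\colon(C,\partial C)\to(X,L)$, a virtual bundle of real rank $0$, is built from that of $u|_{C'}$, that of the bordered cover $u|_{C''}$, and the node-gluing data at $q$. In localized equivariant $K$-theory its moving part reads
\[
N^{\vir}_{\mathrm{open},\vGa}=N^{\vir}_{\mathrm{closed},\vGa}-T_{p_{\sigma_0}}X+\bigl(\text{smoothing of }q\bigr)+\bigl(\text{disk part of }u|_{C''}\bigr),
\]
where $N^{\vir}_{\mathrm{closed},\vGa}$ is the virtual normal bundle of the corresponding component of $\Mbar_{0,1}(X,\beta')^{T'}$; the term $-T_{p_{\sigma_0}}X$ subtracts the contribution that on the closed side is carried by the constraint $\ev_1^*\phi$ (which restricts to $e_{T'}(T_{p_{\sigma_0}}X)$ on $\cM_{\vGa}$); and the node-smoothing line is $T_qC'\otimes T_qC''$, with $T_qC''$ of weight $\su_1/d$ since $\bw(\tau_0,\sigma_0)=\su_1$ and the cover has degree $d$, and $T_qC'$ contributing the cotangent class $-\psi_1$, so the node factor is $1/(\su_1/d-\psi_1)$.

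I would then evaluate the disk part exactly as in Katz--Liu \cite{KL01}: collecting the equivariant Euler class of the moving deformations minus obstructions of the degree-$d$ bordered cover, together with the $1/d$ from the domain automorphism $\bZ_d$, the $1/d!$ from the multiple-cover combinatorics, and a leftover tangent factor $1/(\su_1/d)$, and specializing $\su_2=f\su_1$, one obtains precisely $(-1)^{fd}\prod_{k=1}^{d-1}(fd+k)/(d\cdot d!)\cdot 1/(\su_1/d)$, the product of integers $\prod_{k=1}^{d-1}(fd+k)$ arising from the $T'_\bR$-weights on the relevant $H^1$ (genericity of $f$, Assumption~\ref{assump:GenFraming}, ensures none of these weights becomes $0$ after the specialization). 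Combining with the preceding node factor and identifying $e_{T'}(T_{p_{\sigma_0}}X)$ with $\ev_1^*\phi|_{\cM_{\vGa}}$, the contribution of $\vGa$ to $N^{X,L,f}_{\beta',d}$ equals $(-1)^{fd}\prod_{k=1}^{d-1}(fd+k)/(d\cdot d!)$ times the contribution of $\vGa$ to
\[
\int_{[\Mbar_{0,1}(X,\beta')^{T'}]^{\vir}}\frac{1}{e_{T'}(N^{\vir})}\cdot\frac{\ev_1^*\phi}{(\su_1/d)(\su_1/d-\psi_1)}\bigg|_{\su_2-f\su_1=0};
\]
summing over $\vGa\in\Gamma^0_{0,1}(X,\beta')$ and reassembling via the virtual localization theorem on $\Mbar_{0,1}(X,\beta')$ then yields \eqref{eqn:FLDiskComputation}.

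The main obstacle is the middle step: justifying the splitting of $N^{\vir}_{\mathrm{open},\vGa}$ requires a careful local study of the two obstruction complexes at the node $q$ and over the bordered component $C''$, and pinning down the exact scalar together with the factor $1/((\su_1/d)(\su_1/d-\psi_1))$ is the bookkeeping-heavy Katz--Liu multiple-cover computation, where signs, the separate roles of the $d$ and $d!$ factors, and the framing-dependent weights must all be tracked precisely. Once that is in place, the graph sum and the passage back to an integral over $\Mbar_{0,1}(X,\beta')$ are a routine application of virtual localization.
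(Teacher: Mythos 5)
Your proposal is correct and follows essentially the same route as the paper, which does not prove this statement itself but quotes it as Proposition~3.4 of Fang--Liu \cite{FL13}, describing the argument exactly as you do: a comparison of the perfect obstruction theories of $\Mbar(X,L\mid\beta',d)$ and $\Mbar_{0,1}(X,\beta')$ over matching fixed components, with the degree-$d$ disk cover contributing the Katz--Liu multiple-cover factor, the node at $q$ contributing $1/(\frac{\su_1}{d}-\psi_1)$, and the extra copy of $T_{p_{\sigma_0}}X$ absorbed by the insertion $\ev_1^*\phi$. The only caveat is that the central disk computation producing $(-1)^{fd}\prod_{k=1}^{d-1}(fd+k)/(d\cdot d!)$ is asserted rather than derived, but the paper defers that same computation to \cite{FL13} as well.
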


Using the computation of $e_{T'}(N^\vir)$ on components of $\Mbar_{0,1}(X,\beta')^{T'}$ (see \cite[Theorem 73]{Liu13}), we can rewrite \eqref{eqn:FLDiskComputation} as
\begin{align*}
        N^{X,L,f}_{\beta', d} =& (-1)^{fd}\frac{\prod_{k = 1}^{d-1}(fd+k)}{d \cdot d!}
        \cdot  \sum_{\vGa \in \Gamma_{0,1}(X, \beta')} \prod_{e \in E(\Gamma)} \frac{\bh(\tau_e, d_e)}{d_e} \prod_{v \in V(\Gamma)} (\bw(\sigma_v)^{\val(v)-1}(i_{\sigma_v}^*\phi)^{n_v})\\
        &\cdot  \prod_{v \in V(\Gamma)} \int_{\Mbar_{0, E_v \cup S_v}} \frac{1}{((\frac{\su_1}{d})(\frac{\su_1}{d} - \psi_1))^{n_v} \cdot \prod_{e \in E_v}(\frac{\bw(\tau_e, \sigma_v)}{d_e} - \psi_{(e,v)})} \bigg|_{\su_2 - f\su_1 = 0}.
\end{align*}
Here, in the integral, $\psi_{(e,v)}$ is the $\psi$-class of $\Mbar_{0, E_v \cup S_v}$ corresponding to the marked point $e \in E_v$. Moreover, for each $\vGa \in \Gamma_{0,1}(X, \beta')$ there is a unique vertex $v \in V(\Gamma)$ with $n_v \neq 0$, for which $n_v = 1$ and $S_v = \{1\}$. We adopt the following convention for unstable vertices:
\begin{equation}\label{eqn:UnstableIntegral}
    \int_{\Mbar_{0,1}} \frac{1}{\sw_1 - \psi_1} = \sw_1, \qquad \int_{\Mbar_{0,2}} \frac{1}{(\sw_1 - \psi_1)(\sw_2 - \psi_2)} = \frac{1}{\sw_1+\sw_2}, \qquad \int_{\Mbar_{0,2}} \frac{1}{\sw_1 - \psi_1} = 1.
\end{equation}
The quantities involved in the equation above are defined as follows:
\begin{itemize}
    \item For each $\tau \in \Sigma(2)_c$ and $d \in \bZ_{>0}$, if $f_d: \bP^1 \to l_{\tau}$ denotes the degree-$d$ cover totally ramified over the two $T$-fixed points of $l_{\tau}$, we define
    \begin{equation}\label{eqn:LocalPieceDefHTau}
        \bh(\tau, d) := \frac{e_{T'}(H^1(\bP^1, f_d^*TX)^m)}{e_{T'}(H^0(\bP^1, f_d^*TX)^m)},
    \end{equation}
    where the superscript ``$m$" stands for the \emph{moving part}:Any complex representation $V$ of $T'$ is a direct sum of $1$-dimensional representations and can be written as a direct sum $V=V^f\oplus V^m$, where the \emph{fixed part} $V^f$ (resp. \emph{moving part} $V^m$) is a direct sum of trivial (resp. non-trivial) $1$-dimensional $T'$-representations.
    
    \item For each $\sigma \in \Sigma(3)$, we define
    \begin{equation}\label{eqn:LocalPieceDefWSigma}
        \bw(\sigma) := e_{T'}(T_{p_{\sigma}}X) = \prod_{(\tau, \sigma) \in F(\Sigma)} \bw(\tau,\sigma).
    \end{equation}

\end{itemize}

To further simplify the above computation of $N^{X,L,f}_{\beta',d}$, observe that
$$
    i_{\sigma}^*\phi = \begin{cases}
            0 & \mbox{if } \sigma \neq \sigma_0\\
            \bw(\sigma_0) & \mbox{if } \sigma = \sigma_0.
        \end{cases}
$$
Therefore only decorated graphs in $\Gamma_{0,1}^0(X, \beta')$ contribute. We have
\begin{proposition}\label{prop:OpenLocalResult}
For any effective class $\beta' \in H_2(X;\bZ)$ of $X$ and $d \in \bZ_{>0}$,
\begin{align*}      
        N^{X,L,f}_{\beta', d} =& (-1)^{fd}\frac{\prod_{k = 1}^{d-1}(fd+k)}{ d!\su_1} \cdot  \sum_{\vGa \in \Gamma_{0, 1}^0(X, \beta')} \frac{1}{|\Aut(\vGa)|} \cdot \prod_{e \in E(\Gamma)} \frac{\bh(\tau_e, d_e)}{d_e} \\
        & \cdot  \prod_{v \in V(\Gamma)} \int_{\Mbar_{0, E_v \cup S_v}} \frac{\bw(\sigma_v)^{\val(v)-1+n_v}}{ (\frac{\su_1}{d} - \psi_1)^{n_v} \cdot \prod_{e \in E_v}(\frac{\bw(\tau_e, \sigma_v)}{d_e} - \psi_{(e,v)})} \bigg|_{\su_2 - f\su_1 = 0}.
    \end{align*}
\end{proposition}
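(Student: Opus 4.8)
The plan is to expand the right-hand side of \eqref{eqn:FLDiskComputation} into a sum over decorated graphs using the toric localization formula of \cite[Theorem 73]{Liu13}, then exploit the specific shape of the integrand --- in particular the class $\phi$ --- to collapse the sum onto $\Gamma^0_{0,1}(X,\beta')$, and finally rearrange the numerical prefactors. Concretely, by \cite[Theorem 73]{Liu13} the fixed locus $\Mbar_{0,1}(X,\beta')^{T'}$ is a disjoint union, indexed by $\vGa\in\Gamma_{0,1}(X,\beta')$, of components identified with $[\cM_\vGa/A_\vGa]$, where $\cM_\vGa=\prod_{v\in V^S(\Gamma)}\Mbar_{0,E_v\cup S_v}$ and $|A_\vGa|=|\Aut(\vGa)|\prod_e d_e$; on such a component the factor $1/e_{T'}(N^\vir)$, together with the node-smoothing contributions, factors into the edge pieces $\bh(\tau_e,d_e)$ of \eqref{eqn:LocalPieceDefHTau}, the vertex weights $\bw(\sigma_v)$, and per-vertex integrals over $\Mbar_{0,E_v\cup S_v}$ against the $\psi$-classes $\psi_{(e,v)}$; the restriction of $\ev_1^*\phi$ is $i^*_{\sigma_v}\phi$ for the vertex $v=\vs(1)$ carrying the marked point; and the extra factor $\bigl((\frac{\su_1}{d})(\frac{\su_1}{d}-\psi_1)\bigr)^{-1}$ from \eqref{eqn:FLDiskComputation} is attached to the vertex integral at that same vertex. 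Integrating over $[\cM_\vGa/A_\vGa]$ produces $1/|A_\vGa|=\bigl(1/|\Aut(\vGa)|\bigr)\cdot\prod_e(1/d_e)$, the $\prod_e(1/d_e)$ merging with the edge pieces into $\prod_e\bh(\tau_e,d_e)/d_e$, and the unstable vertices are handled via the conventions \eqref{eqn:UnstableIntegral}.

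This produces a sum over all of $\Gamma_{0,1}(X,\beta')$; to prune it, recall that $\phi$ is the $T'$-equivariant Poincar\'e dual of $p_{\sigma_0}$, so $i^*_\sigma\phi=0$ for $\sigma\neq\sigma_0$ while $i^*_{\sigma_0}\phi=e_{T'}(T_{p_{\sigma_0}}X)=\bw(\sigma_0)$. Each $\vGa\in\Gamma_{0,1}(X,\beta')$ has a unique vertex with $n_v\neq 0$, namely $v=\vs(1)$, where $n_v=1$ and $S_v=\{1\}$; at every other vertex $n_v=0$, so $(i^*_{\sigma_v}\phi)^{n_v}=1$ no matter what $\sigma_v$ is. Thus a graph contributes only if $\vf\circ\vs(1)=\sigma_0$, i.e.\ $\vGa\in\Gamma^0_{0,1}(X,\beta')$, and for such a graph the marked vertex contributes $\bw(\sigma_0)^{\val(v)-1}\cdot\bw(\sigma_0)=\bw(\sigma_0)^{\val(v)-1+n_v}$, which is precisely the uniform expression $\bw(\sigma_v)^{\val(v)-1+n_v}$ valid at every vertex (using $n_v=0$ elsewhere). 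Finally, since each surviving graph has exactly one marked vertex, I pull the constant $(\su_1/d)^{-1}=d/\su_1$ out of its vertex integral and out of the sum, which turns the prefactor $\frac{1}{d\cdot d!}$ into $\frac{1}{d!\,\su_1}$ and leaves $(\frac{\su_1}{d}-\psi_1)^{-n_v}$ inside the integral; absorbing $\bw(\sigma_v)^{\val(v)-1+n_v}$ into the numerator of the vertex integral then gives exactly the formula of Proposition \ref{prop:OpenLocalResult}.

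I expect no essential difficulty beyond already having Fang-Liu's reduction \eqref{eqn:FLDiskComputation} (see \cite{FL13}) and the graph-localization formalism of \cite{Liu13} at hand; the main thing to be careful about is the simultaneous bookkeeping of the numerical factors $1/d$, $1/d!$, $1/|\Aut(\vGa)|$, $\prod_e 1/d_e$, and the extracted $d/\su_1$, together with the $\psi$-class conventions \eqref{eqn:UnstableIntegral} for the unstable vertices in $V^1(\Gamma)$, $V^{1,1}(\Gamma)$, and $V^2(\Gamma)$ (note that the marked vertex itself lies in $V^{1,1}(\Gamma)$ exactly when $\val(v)=1$). In particular one must check that the single power of $\su_1$ in the denominator arises exactly once --- from the unique marked vertex --- which is why the restriction to $\Gamma^0_{0,1}(X,\beta')$ must be carried out before that constant is extracted.
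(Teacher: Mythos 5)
Your proposal is correct and follows essentially the same route as the paper: apply the graph-localization formula of \cite[Theorem 73]{Liu13} to the right-hand side of \eqref{eqn:FLDiskComputation}, use the vanishing of $i_\sigma^*\phi$ for $\sigma\neq\sigma_0$ together with $i_{\sigma_0}^*\phi=\bw(\sigma_0)$ to restrict the sum to $\Gamma^0_{0,1}(X,\beta')$ and absorb that factor into the exponent $\val(v)-1+n_v$, and finally extract the constant $d/\su_1$ from the marked vertex to convert the prefactor $\tfrac{1}{d\cdot d!}$ into $\tfrac{1}{d!\,\su_1}$. Your bookkeeping of $1/|A_{\vGa}|=\bigl(1/|\Aut(\vGa)|\bigr)\prod_e(1/d_e)$ and of the unstable-vertex conventions \eqref{eqn:UnstableIntegral} matches what the paper leaves implicit.
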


\subsection{Maximally tangent relative Gromov-Witten invariants of \texorpdfstring{$(Y,D)$}{(Y,D)}}
In this section, we compute the genus-zero maximally-tangent relative Gromov-Witten invariants of the log Calabi-Yau 3-fold $(Y,D)$, as a special class of relative Gromov-Witten invariants. As our computation will show, these invariants can be recovered from the \emph{formal} relative Gromov-Witten invariants of $(\hat{Y}, \hat{D})$ introduced by Li-Liu-Liu-Zhou \cite{LLLZ09}, where $(\hat{Y}, \hat{D})$ is the formal completion of $(Y,D)$ along the 1-skeleton $Y^1_c$. We use the moduli spaces of relative stable maps defined by Li \cite{Li01, Li02}. 
Our computation is similar to those in \cite{LLLZ09, LLZ03}.

\subsubsection{Expanded targets and torus action}\label{sect:RelTarget}
Let $\Delta(D)$ be the total space of the projective line bundle
$$
    \bP(\cO_D \oplus N_{D/Y}) \to D.
$$
The action of the Calabi-Yau torus $T'$ on $D$ extends to an action on $\Delta(D)$, under which the fixed locus is the fiber over $p_{\hsi_0}$.

For each $m \in \bZ_{\ge 0}$, define
$$
    Y[m] = Y \cup \Delta_{(1)} \cup \cdots \cup \Delta_{(m)},
$$
where each $\Delta_{(i)}$ is a copy of $\Delta(D)$ with distinguised sections $D_{(i-1)} = \bP(\cO_D \oplus 0)$ and $D_{(i)} = \bP(0 \oplus N_{D/Y})$, and
$$
    Y \cap \Delta_{(1)} = D_{(0)}, \qquad \Delta_{(i)} \cap \Delta_{(i+1)} = D_{(i)} \quad \mbox{for } i = 1, \dots, m-1.
$$
In particular, for $m=0$, we have $(Y[0], D_{(0)}) = (Y,D)$. We denote
$$
    Y(m) = \Delta_{(1)} \cup \cdots \cup \Delta_{(m)},
$$
which admits a map to $D_{(0)} = D$ by projection. This induces a projection map
$$
    \pi_m: Y[m] \to Y.
$$
We denote $p_{(0)} = p_{\hsi_0}$. For each $i = 1, \dots, m$, let $l_{(i)}:= \pi_m^{-1}(p_{(0)}) \cap \Delta_{(i)} \cong \bP^1$ and $p_{(i)}$ be the unique point where $l_{(i)}$ and $D_{(i)}$ intersects. Let
$$
    Y^1_c(m) := l_{(1)} \cup \cdots \cup l_{(m)},
$$
which is a chain of $m$ copies of $\bP^1$'s, and $Y^1_c[m]:= Y^1_c \cup Y^1_c(m)$. Then the torus $T'$ acts on $Y(m)$ with fixed locus $Y^1_c(m)$ and the projection $\pi_m:Y[m] \to Y$ is $T'$-equivariant. In addition, there is a $(\bC^*)^m$-action on $Y(m)$ that scales the fiber direction of each $\Delta_{(i)}$ and makes the projection $Y(m) \to D$ invariant. This action extends to an action on $Y[m]$ that pointwise fixes $Y$ and makes $\pi_m$ invariant, which restricts to an action on $Y^1_c[m]$ that pointwise fixes $Y^1_c$.

\subsubsection{Definition}
Let $\hbeta \in H_2(Y;\bZ)$ be an effective curve class of $Y$ such that $d := \hbeta \cdot D>0$. Let
$$
    \Mbar(Y/D, \hbeta)
$$
be the moduli space of morphisms
$$
    u: (C, x) \to (Y[m], D_{(m)})
$$
where
\begin{itemize}
    \item $(C, x)$ is a connected prestable Riemann surface of arithmetic genus $0$ with a single marked point $x$;

    \item $m \in \bZ_{\ge 0}$ and $(Y[m],D_{(m)})$ is as defined in Section \ref{sect:RelTarget};

    \item $(\pi_m \circ u)_*[C] = \hbeta$.

    \item $u(x) \in D_{(m)}$ and $u^{-1}(D_{(m)}) = dx$ as Cartier divisors.

    \item For each $i = 1, \dots, m-1$, the preimage $u^{-1}(D_{(i)})$ consists of nodes in $C$. If $q \in u^{-1}(D_{(i)})$ and $C_1, C_2$ are the two irreducible components of $C$ that intersect at $q$, then $u|_{C_1}$ and $u|_{C_2}$ have the same contact order to $D_{(i)}$ at $q$.

    \item The automorphism group of $u$ is finite. Here, an automorphism of $u$ is a pair $(\alpha_1, \alpha_2)$, where $\alpha_1$ is an automorphism of $(C,x)$ and $\alpha_2$ is an automorphism of $(Y[m], D_{(m)})$ that makes $\pi_m$ invariant, such that $u \circ \alpha_1 = \alpha_2 \circ u$.
\end{itemize}
$\Mbar(Y/D, \hbeta)$ is a Deligne-Mumford stack with a perfect obstruction theory of virtual dimension $1$. The action of $T'$ on the expanded targets 
$(Y[m], D_{(m)})$ induces an action on $\Mbar(Y/D, \hbeta)$ under which the fixed locus is proper.

The genus-zero, degree-$\hbeta$ \emph{maximally-tangent relative Gromov-Witten invariant} of $(Y,D)$ is defined by
$$
    N_{\hbeta}^{Y,D}:= \int_{[\Mbar(Y/D, \hbeta)^{T'}]^\vir}\frac{\su_2 - f\su_1}{e_{T'}(N^\vir)} \bigg|_{\su_2 - f\su_1 = 0} \in \bQ
$$
where $N^\vir$ is the virtual normal bundle of $\Mbar(Y/D, \hbeta)^{T'}$. Since $f$ is generic (Assumption \ref{assump:GenFraming}), as our computation will show, on each component $\cF$ of $\Mbar(Y/D, \hbeta)^{T'}$, the power of $\su_2 - f\su_1$ in the expression $\displaystyle \int_{[\cF]^\vir}\frac{\su_2 - f\su_1}{e_{T'}(N^\vir)}$ is always non-negative. This implies that $N_{\hbeta}^{Y,D}$ is well-defined.

\begin{remark} \rm{
Our choice of the class $\su_2 - f\su_1$ in the integration follows the choice by Li-Liu-Liu-Zhou \cite{LLLZ09}: The moduli space $\Mbar(Y/D, \hbeta)$ admits an evaluation map $\ev$ to the divisor $D$ associated to the marked point $x$. Set $L := l_{\htau_2} \subset D$. Then our choice of class is $\ev^* c_1^{T'}(\cO_D(L)) = \su_2 - f\su_1$.
}\end{remark}

\begin{remark} \rm{
The weight restriction $\su_2 - f\su_1 = 0$ will turn out to be unnecessary: By our localization computation in the rest of this section, the expression
$$
    \int_{[\Mbar(Y/D, \hbeta)^{T'}]^\vir}\frac{\su_2 - f\su_1}{e_{T'}(N^\vir)}
$$
can be identified with a formal relative Gromov-Witten invariant of $(\hat{Y}, \hat{D})$ introduced by Li-Liu-Liu-Zhou \cite{LLLZ09}, which is shown to be a rational number independent of $\su_1, \su_2$ (see \cite[Theorem 4.8]{LLLZ09}).
}\end{remark}

For the rest of this section, we compute $N_{\hbeta}^{Y,D}$ by localization. The end result is given in Proposition \ref{prop:RelativeLocalResult}. We first describe connected components of the $T'$-fixed locus of $\Mbar(Y/D, \hbeta)$ in terms of decorated graphs, and then compute the contribution of each component by computing the virtual normal bundle from the moving part of the perfect obstruction theory.

\subsubsection{Fixed locus of moduli}\label{sect:RelFixed}
Let $u:(C, x) \to (Y[m], D_{(m)})$ be a relative stable map that represents a point $[u] \in \Mbar(Y/D, \hbeta)^{T'}$. Let $\tu:= \pi_m \circ u: C \to Y$ denote the composition. The image of $u$ lies in $Y^1_c[m] \subset Y[m]$, and the image of $\tu$ lies in $Y^1_c \subset Y$. Then we associate to $u$ the decorated graph $\vGa = (\Gamma, \vf, \vd, \vs) \in \Gamma_{0, 1}(Y, \hbeta)$ associated to $\tu$ as in Section \ref{sect:DecGraphs}. Note that $\vf \circ \vs(1)=\hsi_0$. Let $\Gamma^{Y,D}_{\hbeta}$ denote the set of all decorated graphs that arise this way, which indexes the connected components of $\Mbar(Y/D, \hbeta)^{T'}$.

Given $\vGa \in \Gamma^{Y,D}_{\hbeta}$, let $\cF_{\vGa}$ denote the connected component of $\Mbar(Y/D, \hbeta)^{T'}$ indexed by $\vGa$. We partition $V(\Gamma)$ into two subsets
$$
    V(\Gamma)^{(0)} := \{v \in V(\Gamma): \vf(v) \neq \hsi_0 \}, \qquad V(\Gamma)^{(1)} :=\vf^{-1}(\hsi_0).
$$
Note that for any $v \in V(\Gamma)^{(0)}$, $n_v = 0$ and $S_v = \emptyset$. Moreover, $V(\Gamma)^{(1)} = \{\hv_0(\vGa)\}$ is a singleton set. Let $\mu(\vGa) = (\mu(\vGa)_1, \dots, \mu(\vGa)_{\ell(\mu(\vGa))})$ be the partition of $d$ determined by the degrees of $\tu$ restricted to the components in $\vf^{-1}(\tau_0) \subset E(\Gamma)$. Note that $\ell(\mu(\vGa)) =1$ if and only if $m=0$. We consider two cases separately:
\begin{itemize}
    \item {\it Case I:} $\ell(\mu(\vGa))=1$, i.e. $\mu(\vGa) = (d)$. In this case, $\vGa$ satisfies the following:

    \begin{itemize}
        \item[$\circ$] $C_{\hv_0(\vGa)} = \{x\}$.
    
        \item[$\circ$] There is a unique edge $e_0(\vGa) \in E(\Gamma)$ such that $\vf(e_0(\vGa)) = \tau_0$. We have $(e_0(\vGa), \hv_0(\vGa)) \in F(\Gamma)$ and $\vd(e_0(\vGa)) = d$.

        \item[$\circ$] $\vs(1) = \hv_0(\vGa)$, $n_{\hv_0(\vGa)} = 1$, and $S_{\hv_0(\vGa)} = \{1\}$.
    \end{itemize}

    There is a map $i_{\vGa}: \cM_{\vGa} \to \Mbar(Y/D, \hbeta)$ with image $\cF_{\vGa}$, under which $\cF_{\vGa}$ can be identified as the quotient $[\cM_{\vGa}/A_{\vGa}]$ as in Section \ref{sect:DecGraphs}.

    \item {\it Case II:} $\ell(\mu(\vGa)) > 1$. In this case, for each relative stable map $u: C \to Y[m]$ whose associated graph is $\vGa$, the restriction to $C_{\hv_0(\vGa)}$ represents a point in 
$$
\Mbar_{\vGa}^{(1)}: =\Mbar_{0, 0}(\bP^1, \mu(\vGa), (d)) \sslash \bC^*,
$$ 
the moduli space of relative stable maps to  the non-rigid $(\bP^1,0,\infty)$ with relative condition $\mu(\vGa)$ at $0$ and $(d)$ at $\infty$. Such a map has form
    $$
        u': (C', y_1, \dots, y_{\ell(\mu(\vGa))}, x) \to \bP^1(m)
    $$
    where:
    \begin{itemize}
        \item[$\circ$] $(C', y_1, \dots, y_{\ell(\mu(\vGa))}, x)$ is a connected prestable Riemann surface of arithmetic genus $0$ with $\ell(\mu(\vGa))+1$ marked points.

        \item[$\circ$] $u'(x) = p_{(m)}$, and $(u')^{-1}(p_{(m)}) = dx$ as Cartier divisors; $u'(y_{j}) = p_{(0)}$ for each $j$, and $(u')^{-1}(p_{(0)}) = \mu(\vGa)_1y_{1} + \cdots + \mu(\vGa)_{\ell(\mu(\vGa))}y_{\ell(\mu(\vGa))}$ as Cartier divisors.

        \item[$\circ$] For $i = 1, \dots, m-1$, the preimage $(u')^{-1}(p_{(i)})$ consists of nodes in $C'$. If $q \in (u')^{-1}(p_{(i)})$ and $C_1, C_2$ are the two irreducible components of $C'$ that intersect at $q$, then $u'|_{C_1}$ and $u'|_{C_2}$ have the same contact order to $p_{(i)}$ at $q$.

        \item[$\circ$] The automorphism group of $u'$ is finite. Here, an automorphism of $u'$ is a pair $(\alpha_1, \alpha_2)$, where $\alpha_1$ is an automorphism of $C'$ fixing $x$ and each $y_j$ and $\alpha_2\in (\bC^*)^m$ is an automorphism of $(\bP^1(m), p_{(0)}, p_{(m)})$, such that $u' \circ \alpha_1 = \alpha_2 \circ u'$.
    \end{itemize}

    There is a map $i_{\vGa}$ from 
    $$
        \hat{\cM}_{\vGa} := \left(\prod_{v \in V^S(\Gamma) \cap V(\Gamma)^{(0)}} \Mbar_{0, E_v} \right) \times \Mbar_{\vGa}^{(1)}
    $$
    to $\Mbar(Y/D, \hbeta)$ with image $\cF_{\vGa}$, under which $\cF_{\vGa}$ can be identified as a quotient $[\hat{\cM}_{\vGa}/\hA_{\vGa}]$, where
    $$
        \hA_{\vGa} = \{ \alpha \in A_{\vGa} : \alpha(e) = e \mbox{ for each } e \in \vf^{-1}(\tau_0)\}.
    $$
\end{itemize}
We denote
$$
    \Gamma^{Y,D,0}_{\hbeta} := \{\vGa \in \Gamma^{Y,D}_{\hbeta} : \ell(\mu(\vGa)) = 1\}.
$$

\subsubsection{Virtual normal bundle}
At a point $[u: (C, x) \to (Y[m], D_{(m)})] \in \Mbar(Y/D, \hbeta)$, the tangent space $T^1$ and the obstruction space $T^2$ are determined by the following two exact sequences
of complex vector spaces:
$$
    0 \to \Ext^0(\Omega_C(x), \cO_C) \to  H^0(\bfD^\bullet) \to T^1 \to \Ext^1(\Omega_C(x), \cO_C) \to H^1(\bfD^\bullet) \to T^2 \to 0,
$$
\begin{align*} 
        0 \to & H^0(C, u^*\Omega_{Y[m]}(\log D_{(m)}))^\vee \to H^0(\bfD^\bullet) \to \bigoplus_{i = 0}^{m-1}H^0_{\et}(\bfR_i^\bullet) \to\\
        \to & H^1(C, u^*\Omega_{Y[m]}(\log D_{(m)}))^\vee \to H^1(\bfD^\bullet) \to \bigoplus_{i = 0}^{m-1}H^1_{\et}(\bfR_i^\bullet) \to 0.
\end{align*}
Here, if $\{q_1, \dots, q_{n_i}\}$ is a list of nodes in $u^{-1}(D_{(i)})$, then
$$
    H^0_{\et}(\bfR_i^\bullet) \cong \bigoplus_{j = 1}^{n_i} T_{q_j}(u^{-1}(\Delta_{(i)})) \otimes T_{q_j}^*(u^{-1}(\Delta_{(i)})); 
$$
$$
    H^1_{\et}(\bfR_i^\bullet) \cong \left( \bigoplus_{j = 1}^{n_i} (u|_{\{q_j\}})^{-1}N_{D_{(i)}/\Delta_{(i-1)}} \otimes N_{D_{(i)}/\Delta_{(i)}}\right) \slash \bC
$$
where as all the 1-dimensional vector spaces $(u|_{\{q_j\}})^{-1}N_{D_{(i)}/\Delta_{(i-1)}} \otimes N_{D_{(i)}/\Delta_{(i)}}$ are isomorphic, we mod out the direct sum by the diagonal embedding of this vector space.

The obstruction theory of $\Mbar(Y/D, \hbeta)$ is $T'$-equivariant, and the virtual normal bundle $N^{\vir}$ of $\Mbar(Y/D, \hbeta)^{T'}$ is given by the moving part. From a calculation similar to that in \cite[Appendix A]{LLZ03}, we obtain the following: For a decorated graph $\vGa \in \Gamma^{Y,D,0}_{\hbeta}$, we have
$$
    i^*_{\vGa} \left(\frac{1}{e_{T'}(N^\vir)}\right) = \prod_{v \in V(\Gamma)^{(0)}} \frac{\bw(\sigma_v)^{\val(v)-1}}{\prod_{e \in E_v} (\frac{\bw(\tau_e, \sigma_v)}{d_e} - \psi_{(e,v)})} \cdot \prod_{e \in E(\Gamma)} \bh(\tau_e, d_e),
$$
where:
\begin{itemize}
    \item For each $\htau \in \hSi(2)_c$ and $d \in \bZ_{>0}$, if $f_d: \bP^1 \to l_{\htau}$ denotes the degree $d$ cover totally ramified over the two $T$-fixed points of $l_{\htau}$, we define
    $$
        \bh(\htau, d) := \frac{e_{T'}(H^1(\bP^1, f_d^*\Omega_Y(\log D)^\vee)^m)}{e_{T'}(H^0(\bP^1, f_d^*\Omega_Y(\log D)^\vee)^m)}.
    $$
    If $\htau \in \Sigma(2)_c$, the above definition is consistent with \eqref{eqn:LocalPieceDefHTau} in Section \ref{sect:OpenLocalize} since $(\Omega_Y(\log D)^\vee)|_{X} = TX$.

    \item For each $\hsi \in \hSi(3)$, we define
    $$
        \bw(\hsi) := e_{T'}(T_{p_{\hsi}}Y) = \prod_{(\htau, \hsi) \in F(\hSi)} \bw(\htau,\hsi).
    $$
    If $\hsi \in \Sigma(3)$, the above definition is consistent with \eqref{eqn:LocalPieceDefWSigma}.
\end{itemize}

Moreover, for $\vGa \in \Gamma^{Y,D}_{\hbeta} \setminus \Gamma^{Y,D,0}_{\hbeta}$,  
\begin{equation}\label{eqn:RelVirNormalPositive}
\begin{aligned}
    i^*_{\vGa} \left(\frac{1}{e_{T'}(N^\vir)}\right)  =&   (-\su_1- \psi^t)^{\ell(\mu(\vGa))-1} \cdot  \frac{  \left( (-f\su_1+\su_2)(f\su_1-\su_2) \right)^{\ell(\mu(\vGa))-1} }
    {\prod_{j=1}^{\ell(\mu(\vGa))} (-\frac{\su_1}{\mu(\vGa)_j} - \psi_{(e,\hv_0(\vGa))} )}  \\ 
    & \cdot \prod_{v \in V(\Gamma)^{(0)} } \frac{\bw(\sigma_v)^{\val(v)-1}}{\prod_{e \in E_v} (\frac{\bw(\tau_e, \sigma_v)}{d_e} - \psi_{(e,v)})} \cdot \prod_{e \in E(\Gamma)} \bh(\tau_e, d_e),
 \end{aligned} 
\end{equation}
where $\psi^t$ is the \emph{target psi class} of $\Mbar_{\vGa}^{(1)}= \Mbar_{0,0}(\bP^1, \mu(\vGa), (d))\sslash \bC^*$ at $0 \in \bP^1$ (see e.g. \cite[Section 5]{LLZ07} for the definition). We note in particular that the term $(-\su_1- \psi^t)^{\ell(\mu(\vGa))-1}$ comes from the moving part of $\bigoplus_{i = 0}^{m-1}H^1_{\et}(\bfR_i^\bullet)$ in the perfect obstruction theory. For each $e \in E_{\hv_0(\vGa)}$, we have
$$
    \frac{  -\su_1 - \psi^t}{-\frac{\su_1}{d_e} - \psi_{(e,\hv_0(\vGa))}} =   d_e.
$$
Thus \eqref{eqn:RelVirNormalPositive} can be simplified as follows:
\begin{eqnarray*}
    i^*_{\vGa} \left(\frac{1}{e_{T'}(N^\vir)}\right) & = & \prod_{j = 1}^{\ell(\mu(\vGa))}\mu(\vGa)_j \cdot  (-1)^{\ell(\mu(\vGa)) - 1}(\su_2-f\su_1)^{2\ell(\mu(\vGa))-2} 
   \cdot \frac{1}{-\su_1 - \psi^t}  \\
   & & \cdot \prod_{v \in V(\Gamma)^{(0)}} \frac{\bw(\sigma_v)^{\val(v)-1}}{\prod_{e \in E_v} (\frac{\bw(\tau_e, \sigma_v)}{d_e} - \psi_{(e,v)})} \cdot \prod_{e \in E(\Gamma)} \bh(\tau_e, d_e).
\end{eqnarray*}

\subsubsection{Summary of computation}
We summarize our computation of the maximally-tangent relative Gromov-Witten invariants of $(Y,D)$ as follows:
\begin{proposition}\label{prop:RelativeLocalResult}
Let $\hbeta \in H_2(Y;\bZ)$ be an effective curve class of $Y$ such that $d := \hbeta \cdot D>0$. Then
    \begin{align*}      
        N^{Y,D}_{\hbeta} =& \sum_{\vGa \in \Gamma^{Y,D,0}_{\hbeta}} \frac{\su_2-f\su_1}{|\Aut(\vGa)|} \cdot \prod_{e \in E(\Gamma)} \frac{\bh(\tau_e, d_e)}{d_e} 
         \cdot  \prod_{v \in V(\Gamma)^{(0)}} \int_{\Mbar_{0, E_v}} \frac{\bw(\sigma_v)^{\val(v)-1}}{\prod_{e \in E_v} (\frac{\bw(\tau_e, \sigma_v)}{d_e} - \psi_{(e,v)})} \bigg|_{\su_2 -f\su_1 = 0}\\
        +& \sum_{\vGa \in \Gamma^{Y,D}_{\hbeta} \setminus \Gamma^{Y,D,0}_{\hbeta}} \frac{\su_2-f\su_1}{|\Aut(\vGa)|} \cdot  \prod_{e \in E(\Gamma)} \frac{\bh(\tau_e, d_e)}{d_e} 
         \cdot \prod_{v \in V(\Gamma)^{(0)}} \int_{\Mbar_{0, E_v }} \frac{\bw(\sigma_v)^{\val(v)-1}}{\prod_{e \in E_v} (\frac{\bw(\tau_e, \sigma_v)}{d_e} - \psi_{(e,{}v)})}\\
        & \cdot   \prod_{j = 1}^{\ell(\mu(\vGa))}\mu(\vGa)_j  \cdot (-1)^{\ell(\mu(\vGa))-1} (\su_2-f\su_1)^{2 \ell(\mu(\vGa))-2}
        \int_{\Mbar_{\vGa}^{(1)}} \frac{1}{-\su_1- \psi^t} \bigg|_{\su_2 -f\su_1 = 0}.
    \end{align*}
\end{proposition}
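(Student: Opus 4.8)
The plan is to apply the virtual localization formula of \cite{GP99} directly to the defining integral for $N^{Y,D}_{\hbeta}$, which is already taken over the proper $T'$-fixed locus of $\Mbar(Y/D,\hbeta)$. Decomposing that fixed locus into its connected components $\cF_{\vGa}$, $\vGa\in\Gamma^{Y,D}_{\hbeta}$, as in Section~\ref{sect:RelFixed}, we get
$$
    N^{Y,D}_{\hbeta} = \sum_{\vGa\in\Gamma^{Y,D}_{\hbeta}}\int_{[\cF_{\vGa}]^\vir}\frac{\su_2-f\su_1}{e_{T'}(N^\vir)}\bigg|_{\su_2-f\su_1=0},
$$
so it remains to evaluate the contribution of each $\vGa$. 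A first reduction: the insertion $\su_2-f\su_1=\ev^*c_1^{T'}(\cO_D(L))$ (see the remark preceding the statement) restricts on every $\cF_{\vGa}$ to the scalar $\su_2-f\su_1$, since every relative stable map representing a point of $\cF_{\vGa}$ sends the marked point $x$ into the $\pi_m$-fibre over $p_{\hsi_0}$, so $\ev$ is constant on $\cF_{\vGa}$ with value $p_{\hsi_0}$. Thus each summand is $(\su_2-f\su_1)$ times the integral of the pulled-back inverse Euler class of $N^\vir$, for which we have the two explicit formulas established above.

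For $\vGa\in\Gamma^{Y,D,0}_{\hbeta}$ (Case I) I would pull back along $i_{\vGa}\colon\cM_{\vGa}\to\Mbar(Y/D,\hbeta)$ with $\cM_{\vGa}=\prod_{v\in V^S(\Gamma)\cap V(\Gamma)^{(0)}}\Mbar_{0,E_v}$, noting that $\hv_0(\vGa)\in V^{1,1}(\Gamma)$ has $C_{\hv_0(\vGa)}=\{x\}$ and contributes no moduli. The formula for $i^*_{\vGa}(1/e_{T'}(N^\vir))$ then expresses the integrand as $\prod_e\bh(\tau_e,d_e)$ times vertex factors $\bw(\sigma_v)^{\val(v)-1}/\prod_{e\in E_v}(\bw(\tau_e,\sigma_v)/d_e-\psi_{(e,v)})$; integrating over $\cM_{\vGa}$ (using the conventions \eqref{eqn:UnstableIntegral} at unstable vertices) and dividing by $|A_{\vGa}|$, where $1\to\prod_e\bZ_{d_e}\to A_{\vGa}\to\Aut(\vGa)\to1$ with $\prod_e\bZ_{d_e}$ acting trivially on $\cM_{\vGa}$, yields the first sum in the statement, with prefactor $\frac{1}{|\Aut(\vGa)|}\prod_e\frac{1}{d_e}$.

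For $\vGa\in\Gamma^{Y,D}_{\hbeta}\setminus\Gamma^{Y,D,0}_{\hbeta}$ (Case II) the same steps apply with $\cM_{\vGa}$ replaced by $\hat{\cM}_{\vGa}=\bigl(\prod_{v\in V^S(\Gamma)\cap V(\Gamma)^{(0)}}\Mbar_{0,E_v}\bigr)\times\Mbar_{\vGa}^{(1)}$ and with the second formula for $i^*_{\vGa}(1/e_{T'}(N^\vir))$, which after the simplification $\frac{-\su_1-\psi^t}{-\su_1/d_e-\psi_{(e,\hv_0(\vGa))}}=d_e$ along the edges at $\hv_0(\vGa)$ carries the factors $\prod_j\mu(\vGa)_j$, $(-1)^{\ell(\mu(\vGa))-1}$, $(\su_2-f\su_1)^{2\ell(\mu(\vGa))-2}$ and the integral $\int_{\Mbar_{\vGa}^{(1)}}\frac{1}{-\su_1-\psi^t}$, the target $\psi$-class $\psi^t$ arising from the moving part of $\bigoplus_iH^1_{\et}(\bfR_i^\bullet)$. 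The subtle point here is that in $\cF_{\vGa}=[\hat{\cM}_{\vGa}/\hA_{\vGa}]$ the deck groups $\bZ_{\mu(\vGa)_j}$ of the edges over $\tau_0$ are absorbed into $\Mbar_{\vGa}^{(1)}=\Mbar_{0,0}(\bP^1,\mu(\vGa),(d))\sslash\bC^*$ rather than into $\hA_{\vGa}$; tracking this and the $\vGa$-automorphisms permuting equal-degree $\tau_0$-edges reconciles $|\hA_{\vGa}|$ with $|\Aut(\vGa)|\prod_ed_e/\prod_j\mu(\vGa)_j$, so that together with the visible factor $\prod_j\mu(\vGa)_j$ one recovers the prefactor $\frac{1}{|\Aut(\vGa)|}\prod_e\frac{1}{d_e}$, giving the second sum.

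Finally, I would check that the restriction to $\su_2-f\su_1=0$ is legitimate, as asserted in the remark preceding the statement. Over $l_{\tau_0}$ one has $\Omega_Y(\log D)^\vee|_{l_{\tau_0}}\cong T_{l_{\tau_0}}(-\log p_{\hsi_0})\oplus N_{l_{\tau_0}/Y}\cong\cO(1)\oplus\cO(f)\oplus\cO(-f-1)$ equivariantly, and on whichever summand of $N_{l_{\tau_0}/Y}$ has nonnegative degree its weight at $p_{\hsi_0}$, namely $\bw(\htau_2,\hsi_0)=\su_2-f\su_1$ (if $f\ge0$) or $\bw(\htau_3,\hsi_0)=-(\su_2-f\su_1)$ (if $f\le-1$), occurs as a weight of $H^0$ of the degree-$k$ pullback; hence for generic $f$ (Assumption~\ref{assump:GenFraming}) each $\bh(\tau_0,k)$ has exactly a simple pole along $\su_2-f\su_1=0$ and no other $\bh(\tau_e,d_e)$ or vertex factor does. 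In a Case I summand this pole cancels against the overall factor $\su_2-f\su_1$; in a Case II summand the $\ell(\mu(\vGa))$ poles from $\bh(\tau_0,\mu(\vGa)_1),\dots,\bh(\tau_0,\mu(\vGa)_{\ell(\mu(\vGa))})$, against the total power $(\su_2-f\su_1)^{2\ell(\mu(\vGa))-1}$, leave the nonnegative power $(\su_2-f\su_1)^{\ell(\mu(\vGa))-1}$; hence every summand, and so $N^{Y,D}_{\hbeta}$, is well-defined. (Alternatively, one can note that the whole localization sum matches the formula for a formal relative Gromov--Witten invariant of $(\hat Y,\hat D)$, which by \cite[Theorem~4.8]{LLLZ09} is a rational number independent of $\su_1,\su_2$.) I expect the Case II bookkeeping — reconciling $\hA_{\vGa}$, $\Aut(\vGa)$ and the symmetries of $\mu(\vGa)$, and extracting the target $\psi$-class from the étale part of the relative obstruction theory — to be the main obstacle.
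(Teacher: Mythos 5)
Your proposal is correct and follows essentially the same route as the paper: the paper's own justification of this proposition is precisely the preceding description of the fixed-locus components $\cF_{\vGa}$ in the two cases $\ell(\mu(\vGa))=1$ and $\ell(\mu(\vGa))>1$, the pullback formulas for $1/e_{T'}(N^{\vir})$ (obtained by a calculation as in \cite[Appendix A]{LLZ03}, including the target $\psi$-class from the moving part of $\bigoplus_i H^1_{\et}(\bfR_i^\bullet)$ and the simplification $(-\su_1-\psi^t)/(-\su_1/d_e-\psi_{(e,\hv_0(\vGa))})=d_e$), and the automorphism bookkeeping via $A_{\vGa}$ and $\hA_{\vGa}$, exactly as you assemble them. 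Your pole analysis of $\bh(\tau_0,k)$ along $\su_2-f\su_1=0$ also matches the paper's explicit computations \eqref{eqn:bhTauZeroPositive}--\eqref{eqn:bhTauZeroNegative} and the argument of Lemma \ref{lem:OpenRelativePositive}.
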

Here, we adopt the integration convention \eqref{eqn:UnstableIntegral} as in Section \ref{sect:OpenLocalize}. In fact, we will show later that there is no contribution from decorated graphs in $\Gamma^{Y,D}_{\hbeta} \setminus \Gamma^{Y,D,0}_{\hbeta}$ after the weight restriction $\su_2 -f\su_1 = 0$ (see Lemma \ref{lem:OpenRelativePositive}). The dimension of $\Mbar_{\vGa}^{(1)}$ is $\ell(\mu(\vGa)) -2$, and
$$
\int_{\Mbar_{\vGa}^{(1)}} \frac{1}{-\su_1- \psi^t} =  (-\su_1)^{1-\ell(\mu(\vGa))} \int_{\Mbar_{\vGa}^{(1)}} (\psi^t)^{\ell(\mu(\vGa))-2}. 
$$

\subsection{Closed Gromov-Witten invariants of \texorpdfstring{$\tX$}{X}}
In this section, we use localization to compute the closed Gromov-Witten invariants of the toric Calabi-Yau 4-fold $\tX$, following \cite{Liu13}. We restrict our attention to genus zero.

\subsubsection{Definition}
Let $\tbeta \in H_2(\tX;\bZ)$ be a non-zero effective curve class of $\tX$. Let $\Mbar_{0,0}(\tX,\tbeta)$ (resp. $\Mbar_{0,1}(\tX,\tbeta)$) be the moduli space of genus-zero, $0$-pointed (resp. $1$-pointed), degree-$\tbeta$ stable maps to $\tX$, which is a Deligne-Mumford stack with a perfect obstruction theory of virtual dimension $1$ (resp. $2$). The action of the Calabi-Yau 3-torus $\tT'$ on $\tX$ induces a $\tT'$-action on $\Mbar_{0,0}(\tX,\tbeta)$ (resp. $\Mbar_{0,1}(\tX,\tbeta)$) under which the fixed locus is proper.

We consider two genus-zero, degree-$\tbeta$ \emph{closed} \emph{Gromov-Witten invariants} of $\tX$. First, we define a $0$-pointed invariant
\begin{equation}\label{eqn:zero-point}
    N_{\tbeta,0}^{\tX} := \int_{[\Mbar_{0,0}(\tX,\tbeta)^{\tT'}]^\vir} \frac{\su_2-f\su_1}{e_{\tT'}(N^\vir)}  \bigg|_{\su_4 = 0, \su_2 - f\su_1 = 0} \in \bQ,
\end{equation}
where $N^\vir$ is the virtual normal bundle of $\Mbar_{0,0}(\tX,\tbeta)^{\tT'}$. Moreover, we define a $1$-pointed invariant
\begin{equation}\label{eqn:one-point} 
    N_{\tbeta,1}^{\tX} := \int_{[\Mbar_{0,1}(\tX,\tbeta)^{\tT'}]^\vir} \frac{\ev_1^*([\tD][\tD_2])}{e_{\tT'}(N^\vir)}  \bigg|_{\su_4 = 0, \su_2 - f\su_1 = 0} \in \bQ,
\end{equation} 
where $\tD = V[\trho_{R+1}]$, $\tD_2 := V[\trho_2]$, and $N^\vir$ is the virtual normal bundle of $\Mbar_{0,1}(\tX,\tbeta)^{\tT'}$.  
Since $f$ is generic (Assumption \ref{assump:GenFraming}), as our computation will show, on each component $\cF$ of $\Mbar_{0,0}(\tX,\tbeta)^{\tT'}$ (resp. $\Mbar_{0,1}(\tX,\tbeta)^{\tT'}$), the powers of $\su_2 - f\su_1$ and of $\su_4$ in the expression $\displaystyle \int_{[\cF]^\vir} \frac{\su_2-f\su_1}{e_{\tT'}(N^\vir)}$ $\displaystyle \left(\text{resp. } \int_{[\cF]^\vir} \frac{\ev_1^*([\tD][\tD_2])}{e_{\tT'}(N^\vir)} \right)$ are always non-negative. This implies that $N_{\tbeta, 0}^{\tX}$ (resp. $N_{\tbeta, 1}^{\tX}$) is well-defined.

We now make two observations. First, by \eqref{eqn:tSigma0Wts}, we have the following identity in $H^*_{\tT'}(\tX;\bQ)$: 
\begin{equation}\label{eqn:DD}
[\tD] [\tD_2] =  (\su_2-f\su_1)[\tD].
\end{equation}
Second, the evaluation map $\ev_1$ is $\tT'$-equivariant, so 
$$
\ev_1^*: H^*_{\tT'}(\tX;\bQ) \lra H^*_{\tT'}(\Mbar_{0,1}(\tX,\tbeta)^{\tT'} ;\bQ)
$$
is a morphism of modules over $H^*(B\tT';\bQ) =\bQ[\su_1,\su_2,\su_4]$. In particular, we have the following
identity in $H^*_{\tT'}(\Mbar_{0,1}(\tX,\tbeta);\bQ)$:
\begin{equation} \label{eqn:linear}
\ev_1^*((\su_2-f\su_1)[\tD]) = (\su_2-f\su_1)\ev_1^* [\tD].
\end{equation}
It follows from  \eqref{eqn:one-point}, \eqref{eqn:DD}, and \eqref{eqn:linear} that
\begin{equation}\label{eqn:divisor} 
    N_{\tbeta,1}^{\tX} = \int_{[\Mbar_{0,1}(\tX,\tbeta)^{\tT'}]^\vir} \frac{ (\su_2-f\su_1) \ev_1^*[\tD]}{e_{\tT'}(N^\vir)}  \bigg|_{\su_4 = 0, \su_2 - f\su_1 = 0}. 
\end{equation} 
Comparing the right hand sides of \eqref{eqn:divisor} and \eqref{eqn:zero-point}, we see that $N_{\tbeta,1}^{\tX}$ has a divisor insertion $[\tD]$ while $N_{\tbeta, 0}^{\tX}$ has none. In view of the divisor equation, one expects the following relation:
\begin{lemma}\label{lem:DivisorAxiom}
For any effective class $\tbeta \in H_2(\tX;\bZ)$ of $\tX$ satisfying $d:= \tbeta \cdot \tD >0$, we have
$$
    N_{\tbeta,1}^{\tX} = d N_{\tbeta,0}^{\tX}.
$$
\end{lemma}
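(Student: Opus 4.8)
The plan is to read Lemma~\ref{lem:DivisorAxiom} as the (equivariant, non-proper) \emph{divisor equation} for the toric divisor class $[\tD]$ on $\tX$. By \eqref{eqn:divisor} it suffices to establish the identity
\[
    \int_{[\Mbar_{0,1}(\tX,\tbeta)^{\tT'}]^\vir}\frac{(\su_2-f\su_1)\,\ev_1^*[\tD]}{e_{\tT'}(N^\vir)}
    \;=\;
    d\,(\su_2-f\su_1)\int_{[\Mbar_{0,0}(\tX,\tbeta)^{\tT'}]^\vir}\frac{1}{e_{\tT'}(N^\vir)}
\]
as an equality of rational functions in $\su_1,\su_2,\su_4$; specializing $\su_4=0$ and $\su_2-f\su_1=0$ then turns the left side into $N^{\tX}_{\tbeta,1}$ by \eqref{eqn:divisor} and the right side into $d\,N^{\tX}_{\tbeta,0}$ by \eqref{eqn:zero-point}, and the specialization is legitimate because, by genericity of $f$, neither side has a pole along that locus. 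By virtual localization \cite{GP99} the two sides are, respectively, the equivariant virtual integrals $\int_{[\Mbar_{0,1}(\tX,\tbeta)]^\vir}^{\tT'}(\su_2-f\su_1)\,\ev_1^*[\tD]$ and $d\,(\su_2-f\su_1)\int_{[\Mbar_{0,0}(\tX,\tbeta)]^\vir}^{\tT'}1$; although these moduli stacks are not proper, their $\tT'$-fixed loci are, so the symbols are well-defined.

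The mechanism I would use is the forgetful morphism $\pi\colon\Mbar_{0,1}(\tX,\tbeta)\to\Mbar_{0,0}(\tX,\tbeta)$, which makes sense since $\tbeta\neq 0$. It is $\tT'$-equivariant and identifies $\Mbar_{0,1}(\tX,\tbeta)$ with the universal curve over $\Mbar_{0,0}(\tX,\tbeta)$, hence is proper and flat of relative dimension $1$, and the relative perfect obstruction theories are compatible, giving $[\Mbar_{0,1}(\tX,\tbeta)]^\vir=\pi^*[\Mbar_{0,0}(\tX,\tbeta)]^\vir$ (the fundamental-class compatibility underlying the divisor equation for virtual classes; see e.g.\ \cite{B97}). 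By the projection formula,
\[
    \int_{[\Mbar_{0,1}(\tX,\tbeta)]^\vir}^{\tT'}\ev_1^*[\tD]
    \;=\;
    \int_{[\Mbar_{0,0}(\tX,\tbeta)]^\vir}^{\tT'}\pi_*\!\big(\ev_1^*[\tD]\big).
\]
Since $\pi$ has one-dimensional fibres and $\ev_1^*[\tD]$ has cohomological degree $2$, the pushforward $\pi_*(\ev_1^*[\tD])$ lies in $H^0_{\tT'}(\Mbar_{0,0}(\tX,\tbeta);\bQ)$ and is therefore locally constant, equal on each component to the fibrewise degree $\deg\big(f^*\cO_{\tX}(\tD)\big)=\tbeta\cdot\tD=d$. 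Multiplying through by the scalar $\su_2-f\su_1$ and applying virtual localization again on both moduli spaces yields the displayed identity, and hence the lemma.

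The main obstacle will be that the virtual integrals above are only \emph{defined} through localization, so one must check that the virtual-class compatibility and the projection formula persist at the level of these localized integrals. I expect this to be routine: $\pi$ restricts to a proper morphism $\Mbar_{0,1}(\tX,\tbeta)^{\tT'}\to\Mbar_{0,0}(\tX,\tbeta)^{\tT'}$ of \emph{proper} Deligne--Mumford stacks, and one combines the virtual localization formula on both moduli spaces with functoriality of proper pushforward under $\pi$ on the fixed loci. The hands-on alternative, in the spirit of the graph-sum computations of the previous subsections, is to verify the identity directly on the sums over the decorated graphs $\Gamma_{0,1}(\tX,\tbeta)$ and $\Gamma_{0,0}(\tX,\tbeta)$ by tracking the effect of forgetting the marked point: graphs whose marked vertex has $\val\geq 3$ map to graphs of the same shape, while the finitely many graphs whose marked vertex has valence $1$ or $2$ require a short local analysis at that vertex — forgetting the marking there may contract a ghost component — which produces exactly the combinatorics that replaces $\ev_1^*[\tD]$ by the factor $d$. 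That bookkeeping is the only mildly delicate step; I expect the abstract argument via the forgetful morphism to be the cleanest to write.
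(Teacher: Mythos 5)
Your proposal is correct in substance but takes a genuinely different route from the paper. You derive the lemma as an instance of the divisor equation: forgetful map $\pi\colon\Mbar_{0,1}(\tX,\tbeta)\to\Mbar_{0,0}(\tX,\tbeta)$, the compatibility $[\Mbar_{0,1}(\tX,\tbeta)]^{\vir}=\pi^*[\Mbar_{0,0}(\tX,\tbeta)]^{\vir}$, the projection formula, and $\pi_*\ev_1^*[\tD]=d$. The paper instead works entirely inside the graph sums of Proposition \ref{prop:ClosedLocalResult}: it notes that $i_{\tsi}^*[\tD]$ vanishes unless $\tsi=\iota(\hsi_0)$, reuses the vanishing argument of Lemma \ref{lem:RelativeLocalValLarge} to cut the sum down to the graphs $\Gamma^0_{0,1}(\tX,\tbeta)$ with a single edge over $\iota(\tau_0)$, matches these bijectively with $\Gamma^0_{0,0}(\tX,\tbeta)$ by forgetting the mark, and observes that the only discrepancy per graph is the unstable-vertex factor $-\su_1$ versus $-\su_1/d$, i.e.\ exactly $d$. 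This is precisely the ``hands-on alternative'' you sketch at the end. What the paper's route buys is that the facts you must import --- the absence of poles along $\su_4=0$, $\su_2-f\su_1=0$, and the restriction to $\Gamma^0$-graphs --- are established along the way and reused in Lemma \ref{lem:RelativeLocalVal1}; your route buys brevity and conceptual clarity. The one step you should not wave at is the passage from the cycle-level identity $\pi_*\bigl(\ev_1^*[\tD]\cap[\Mbar_{0,1}(\tX,\tbeta)]^{\vir}\bigr)=d\,[\Mbar_{0,0}(\tX,\tbeta)]^{\vir}$ to an identity of localization residues: the $\tT'$-fixed locus of $\Mbar_{0,1}(\tX,\tbeta)$ is \emph{not} the preimage of that of $\Mbar_{0,0}(\tX,\tbeta)$ (the fibre of $\pi$ over a fixed map is the whole domain curve, of which only finitely many points are fixed), so ``functoriality of proper pushforward on the fixed loci'' is not literally the mechanism. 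What is needed is that the residue invariant --- the pushforward to a point of the localized equivariant class, which is supported on the proper fixed locus --- is compatible with equivariant proper pushforward along $\pi$; this follows from the localization isomorphism in equivariant Chow groups \cite{EG98} combined with virtual localization \cite{GP99}, but it deserves to be stated rather than absorbed into ``routine.'' With that point made precise, your argument closes.
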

We defer the proof to Section \ref{sect:RelLocal}.

\subsubsection{Localization computations}
Components of the fixed locus $\Mbar_{0,0}(\tX,\tbeta)^{\tT'}$ (resp. $\Mbar_{0,1}(\tX,\tbeta)^{\tT'}$) are indexed by decorated graphs in $\Gamma_{0,0}(\tX, \tbeta)$ (resp. $\Gamma_{0,1}(\tX, \tbeta)$) as defined in Section \ref{sect:DecGraphs}. By \cite[Theorem 73]{Liu13}, we can compute the closed Gromov-Witten invariants as follows:
\begin{proposition}\label{prop:ClosedLocalResult}
For any non-zero effective curve class $\tbeta \in H_2(\tX;\bZ)$ of $\tX$, we have
\begin{equation}\label{eqn:ClosedLocalResult0}
    N_{\tbeta,0}^{\tX} = \sum_{\vGa \in \Gamma_{0,0}(\tX, \tbeta)} \frac{\su_2-f\su_1}{|\Aut(\vGa)|} \cdot \prod_{e \in E(\Gamma)} \frac{\tbh(\ttau_e, d_e)}{d_e}
        \cdot \prod_{v \in V(\Gamma)} \int_{\Mbar_{0, E_v}} \frac{\tbw(\tsi_v)^{\val(v)-1}}{\prod_{e \in E_v}(\frac{\tbw(\ttau_e,\tsi_v)}{d_e} - \psi_{(e,v)})} \bigg|_{\su_4 = 0, \su_2 - f\su_1 = 0},
\end{equation}
\begin{equation}\label{eqn:ClosedLocalResult1}
        N_{\tbeta,1}^{\tX} = \sum_{\vGa \in \Gamma_{0,1}(\tX, \tbeta)} \frac{\su_2-f\su_1}{|\Aut(\vGa)|} \cdot \prod_{e \in E(\Gamma)} \frac{\tbh(\ttau_e, d_e)}{d_e}
        \cdot \prod_{v \in V(\Gamma)} \int_{\Mbar_{0, E_v \cup S_v}} \frac{\tbw(\tsi_v)^{\val(v)-1}i_{\tsi_v}^*([\tD])^{n_v}}{\prod_{e \in E_v}(\frac{\tbw(\ttau_e,\tsi_v)}{d_e} - \psi_{(e,v)})} \bigg|_{\su_4 = 0, \su_2 - f\su_1 = 0}.
\end{equation}
\end{proposition}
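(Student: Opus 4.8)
The plan is to compute both invariants by applying the virtual localization theorem of Graber--Pandharipande \cite{GP99} to the moduli spaces $\Mbar_{0,0}(\tX,\tbeta)$ and $\Mbar_{0,1}(\tX,\tbeta)$ with respect to the Calabi--Yau torus $\tT'$, and then to invoke the explicit evaluation of the resulting fixed-point contributions for a smooth toric target recorded in \cite[Theorem 73]{Liu13}. Since $\tX$ is a smooth toric variety and $\tX^{\tT'} = \tX^{\tT}$, the connected components of $\Mbar_{0,n}(\tX,\tbeta)^{\tT'}$ are indexed by the decorated graphs $\vGa \in \Gamma_{0,n}(\tX,\tbeta)$ of Section \ref{sect:DecGraphs}, with the component $\cF_\vGa$ identified as the quotient $[\cM_\vGa/A_\vGa]$ described there. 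Localization expresses each invariant as a sum over $\vGa$ of the integral of the restricted insertion divided by $e_{\tT'}(N^\vir)$, and \cite[Theorem 73]{Liu13} supplies the factorization of $e_{\tT'}(N^\vir)^{-1}$ on $\cF_\vGa$ into the edge contributions $\prod_{e \in E(\Gamma)} \tbh(\ttau_e,d_e)$ and the vertex contributions $\prod_{v} \tbw(\tsi_v)^{\val(v)-1}/\prod_{e \in E_v}(\tbw(\ttau_e,\tsi_v)/d_e - \psi_{(e,v)})$. The prefactor $1/|\Aut(\vGa)|$ together with $\prod_{e} 1/d_e$ accounts for the order of $A_\vGa$ through the exact sequence $1 \to \prod_{e} \bZ_{d_e} \to A_\vGa \to \Aut(\vGa) \to 1$ of Section \ref{sect:DecGraphs}.

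For the $0$-pointed invariant \eqref{eqn:zero-point}, the insertion $\su_2 - f\su_1$ lies in $H^*_{\tT'}(\pt)$ and hence factors out of the localization sum as an overall scalar. Applying the above factorization to $\Mbar_{0,0}(\tX,\tbeta)$ then yields \eqref{eqn:ClosedLocalResult0} directly, the vertex integrals being taken over $\Mbar_{0,E_v}$ as there are no marked points. The weight restrictions $\su_4 = 0$ and $\su_2 - f\su_1 = 0$ are imposed only after the graph sum has been assembled; this is legitimate because, as noted after the definition, on each component the powers of $\su_2 - f\su_1$ and of $\su_4$ occurring in the contribution are non-negative, so no pole is met at the specialization.

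For the $1$-pointed invariant I would begin from the rewritten form \eqref{eqn:divisor}, obtained from \eqref{eqn:DD} and \eqref{eqn:linear}, in which the insertion is $(\su_2 - f\su_1)\,\ev_1^*[\tD]$. The scalar $\su_2 - f\su_1$ again factors out of the sum. The restriction of the remaining class $\ev_1^*[\tD]$ to $\cF_\vGa$ equals $i_{\tsi_v}^*[\tD]$ at the fixed point $p_{\tsi_v}$ carrying the marked point, that is, the factor $i_{\tsi_v}^*([\tD])^{n_v}$ attached to the vertex $v = \vs(1)$ (with $n_v = 1$ there and $n_v = 0$ elsewhere). Inserting this into the vertex factors and applying \cite[Theorem 73]{Liu13} to $\Mbar_{0,1}(\tX,\tbeta)$ produces \eqref{eqn:ClosedLocalResult1}, again deferring the weight specialization to the end by the same non-negativity argument.

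Once \cite[Theorem 73]{Liu13} is granted, the computation is essentially bookkeeping: the genuine analytic input, namely the identification of the inverse equivariant Euler class of the virtual normal bundle with the explicit edge/vertex product, rests on the deformation/obstruction analysis of stable maps to a toric variety and is furnished by that theorem. The only points demanding care are the correct placement of the divisor insertion $i_{\tsi_v}^*[\tD]$ on the marked vertex in the $1$-pointed case and the verification that the weight restrictions may be deferred. I expect the latter, namely controlling the powers of $\su_2 - f\su_1$ and $\su_4$ on each fixed component, to be the main although mild obstacle.
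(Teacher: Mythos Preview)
Your proposal is correct and follows essentially the same approach as the paper: the paper simply invokes \cite[Theorem 73]{Liu13} to obtain the localization formulas, and your proof spells out precisely that invocation together with the bookkeeping of the insertion and automorphism factors.
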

Here, we adopt the integration convention \eqref{eqn:UnstableIntegral} as in Section \ref{sect:OpenLocalize}. The quantities involved are defined as follows:
\begin{itemize}
    \item For each $\ttau \in \tSi(3)_c$ and $d \in \bZ_{>0}$, if $f_d: \bP^1 \to l_{\ttau}$ denotes the degree-$d$ cover totally ramified over the two $\tT$-fixed points of $l_{\ttau}$, we define
$$
    \tbh(\ttau, d) := \frac{e_{\tT'}(H^1(\bP^1, f_d^*T\tX)^m)}{e_{\tT'}(H^0(\bP^1, f_d^*T\tX)^m)}.
$$

    \item For each $\tsi \in \tSi(4)$, we define
$$
    \tbw(\tsi) := e_{\tT'}(T_{p_{\tsi}}\tX) = \prod_{(\ttau, \tsi) \in F(\tSi)} \tbw(\ttau,\tsi).
$$
\end{itemize}

\section{Correspondences}\label{sect:Correspondence}
Based on the localization computations of the Gromov-Witten invariants in the previous section, we establish the open/closed correspondence (Theorem \ref{thm:OpenClosedIntro}) in this section, given as Theorem \ref{thm:OpenClosed} in Section \ref{sect:OpenClosed}. This is obtained as a composition of the open/relative correspondence (Theorem \ref{thm:OpenRelativeIntro}), given as Theorem \ref{thm:OpenRelative} in Section \ref{sect:OpenRel}, and the relative/local correspondence (Theorem \ref{thm:RelativeLocalIntro}), given as Theorem \ref{thm:RelativeLocal} in Section \ref{sect:RelLocal}.

\subsection{Open/relative correspondence}\label{sect:OpenRel}
In this section, we identify the disk invariants of $(X,L,f)$ and the genus-zero maximally-tangent relative Gromov-Witten invariants of $(Y,D)$. Recall from Section \ref{sect:Homology} that there is an isomorphism $H_2(X,L;\bZ) \cong H_2(Y;\bZ)$ that identifies the class of the holomorphic disk $B$ with the class of $l_{\tau_0}$.

\begin{theorem}[Open/relative correspondence \cite{FL13, LLLZ09}]\label{thm:OpenRelative}
Let $\beta' \in H_2(X;\bZ)$ be an effective class of $X$, $d \in \bZ_{>0}$, and $\hbeta \in H_2(Y;\bZ)$ be the effective class of $Y$ corresponding to $
\beta:= \beta' + d[B]$ under the isomorphism $H_2(X,L;\bZ) \cong H_2(Y;\bZ)$. Then
$$
    N^{X,L,f}_{\beta', d} = (-1)^{d + 1}N^{Y,D}_{\hbeta}.
$$
\end{theorem}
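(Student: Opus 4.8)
\emph{Strategy.} The plan is to prove Theorem~\ref{thm:OpenRelative} by a direct, term-by-term comparison of the two localization formulas already in hand --- Proposition~\ref{prop:OpenLocalResult} for $N^{X,L,f}_{\beta',d}$ and Proposition~\ref{prop:RelativeLocalResult} for $N^{Y,D}_{\hbeta}$. First I would set up a bijection between the two index sets. Given $\vGa \in \Gamma_{0,1}^0(X,\beta')$, whose unique marked vertex $v_0 = \vs(1)$ satisfies $\vf(v_0) = \sigma_0$, I would attach to $v_0$ a new edge $e_0$ with $\vf(e_0) = \tau_0$ and $\vd(e_0) = d$, together with a new valence-one vertex $\hv_0$ with $\vf(\hv_0) = \hsi_0$, and relocate the marking from $v_0$ to $\hv_0$ (compare Figure~\ref{fig:RelateMaps}). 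Using the isomorphism $H_2(X,L;\bZ) \cong H_2(Y;\bZ)$ (which sends $[B]$ to $[l_{\tau_0}]$), the fact that $\sigma_0$ and $\hsi_0$ are the only two cofaces of $\tau_0$ in $\hSi$, and the fact that the only edges that can meet an $\hsi_0$-vertex carry the label $\tau_0 \in \hSi(2)_c$, one checks that the resulting $\hat\vGa$ is a legitimate decorated graph, that $\vGa \mapsto \hat\vGa$ is a bijection onto the set $\Gamma^{Y,D,0}_{\hbeta}$ of ``Case~I'' graphs from Section~\ref{sect:RelFixed}, and that $|\Aut(\hat\vGa)| = |\Aut(\vGa)|$ since $e_0$ and $\hv_0$ are rigidly determined.

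\emph{Matching contributions.} Next I would compare the summands of the two formulas under this bijection and argue that they agree away from $v_0$ and $e_0$: for each edge $e$ of $\vGa$ the factor $\bh(\tau_e,d_e)$ is the same in both formulas because $(\Omega_Y(\log D)^\vee)|_X = TX$; for each vertex $v \neq v_0$ the weight $\bw(\sigma_v)$ and the integral over $\Mbar_{0,E_v}$ are unchanged; and at $v_0$ the $X$-side integrand has $\bw(\sigma_0)^{\val(v_0)-1+1}$ and the factor $(\tfrac{\su_1}{d} - \psi_1)^{-1}$, whereas on the $Y$-side $v_0$ has absorbed the extra edge $e_0$ and has $\bw(\sigma_0)^{(\val(v_0)+1)-1}$ and the factor $(\tfrac{\bw(\tau_0,\sigma_0)}{d} - \psi_{(e_0,v_0)})^{-1} = (\tfrac{\su_1}{d} - \psi_{(e_0,v_0)})^{-1}$, so that identifying the marked point with the flag $(e_0,v_0)$ makes these two integrals identical. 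After cancellation, comparing $\vGa$ with $\hat\vGa$ would reduce to the single edge-$e_0$ identity
\[
    (-1)^{fd}\,\frac{\prod_{k=1}^{d-1}(fd+k)}{d!\,\su_1}
    \;=\;
    (-1)^{d+1}\,(\su_2 - f\su_1)\,\frac{\bh(\tau_0,d)}{d}\,\bigg|_{\su_2 - f\su_1 = 0},
\]
both sides read as limits $\su_2 \to f\su_1$, which is legitimate once one checks both sides are regular there (the left side is, by Proposition~\ref{prop:OpenLocalResult}; the right side will be by the next step).

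\emph{The local identity.} To prove this identity I would compute $\bh(\tau_0,d)$ directly. By \eqref{eqn:Sigma0Wts} and the observation that $D \supset l_{\htau_2} \cup l_{\htau_3}$ meets $l_{\tau_0}$ transversally at $p_{\hsi_0}$, the restriction $\Omega_Y(\log D)^\vee|_{l_{\tau_0}}$ splits $T'$-equivariantly as $\cO(1) \oplus \cO(f) \oplus \cO(-f-1)$, with fiber weights $(\su_1,0)$, $(\su_2,\su_2-f\su_1)$, $(-\su_1-\su_2,f\su_1-\su_2)$ over $(p_{\sigma_0},p_{\hsi_0})$. Pulling back along the degree-$d$ totally ramified cover $f_d$ and taking Euler classes of the moving parts of $H^0$ and $H^1$ of each summand is the standard computation underlying the disk function (cf.\ \cite{FL13,LLLZ09}); the structural point is that exactly one weight occurring in these cohomology groups equals $\pm(\su_2-f\su_1)$, so $\bh(\tau_0,d)$ has a simple pole along $\su_2 = f\su_1$, and its residue --- once the remaining weights coming from the $\cO(1)$- and $\cO(-f-1)$-summands are collected and simplified --- works out to $(-1)^{d+1+fd}\prod_{k=1}^{d-1}(fd+k)/((d-1)!\,\su_1)$, which is equivalent to the displayed identity. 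I would also record, for the last step, that each $\bh(\tau_0,d')$ has at worst a simple pole along $\su_2 = f\su_1$.

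\emph{Discarding the expanded-target graphs, and the main obstacle.} Finally I would prove Lemma~\ref{lem:OpenRelativePositive}, that the ``Case~II'' graphs $\vGa \in \Gamma^{Y,D}_{\hbeta} \setminus \Gamma^{Y,D,0}_{\hbeta}$ contribute nothing after the restriction. For such $\vGa$, $\ell := \ell(\mu(\vGa)) \ge 2$, and its summand in Proposition~\ref{prop:RelativeLocalResult} carries the explicit factor $(\su_2 - f\su_1)^{1+(2\ell-2)} = (\su_2 - f\su_1)^{2\ell-1}$; the only possible $(\su_2-f\su_1)$-poles come from the $\ell$ edges over $\tau_0$, each contributing at most a simple pole via $\bh(\tau_0,\mu(\vGa)_j)$, while for generic $f$ (Assumption~\ref{assump:GenFraming}) every other weight that appears --- in the $\Mbar_{0,E_v}$-integrals, in $\bh(\tau_e,d_e)$ for edges not over $\tau_0$, and in $\int_{\Mbar_{\vGa}^{(1)}}1/(-\su_1-\psi^t)$ --- is nonzero on $\su_2 = f\su_1$. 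Hence the net order of vanishing is at least $\ell - 1 \ge 1$, and the summand dies, so $N^{Y,D}_{\hbeta}$ coincides with its $\Gamma^{Y,D,0}_{\hbeta}$-part. Combining the bijection, the term-by-term matching, the edge-$e_0$ identity, and this vanishing gives $N^{X,L,f}_{\beta',d} = (-1)^{d+1} N^{Y,D}_{\hbeta}$. The main obstacle is the local identity of the previous step: the bijection and the matching are bookkeeping, and the Case~II vanishing is a pole count, but the explicit evaluation of $\bh(\tau_0,d)$ --- tracking the equivariant weights, the moving/fixed decomposition of the cohomology of the three line-bundle summands, and the precise multiplicity of $\su_2 - f\su_1$ --- is what produces the sign $(-1)^{d+1}$ together with the whole framing-dependent disk factor, and must be carried out carefully.
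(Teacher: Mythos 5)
Your proposal is correct and follows essentially the same route as the paper: the bijection $\Gamma_{0,1}^0(X,\beta')\leftrightarrow\Gamma^{Y,D,0}_{\hbeta}$ with the term-by-term matching reducing to the single identity $(-1)^{fd}\prod_{k=1}^{d-1}(fd+k)/(d!\,\su_1)=(-1)^{d+1}(\su_2-f\su_1)\bh(\tau_0,d)/d\,\big|_{\su_2-f\su_1=0}$ is exactly Lemma \ref{lem:OpenRelativeZero}, and your pole count killing the expanded-target graphs is exactly Lemma \ref{lem:OpenRelativePositive}. Your stated residue of $\bh(\tau_0,d)$ agrees with the paper's explicit evaluation in \eqref{eqn:bhTauZeroPositive}--\eqref{eqn:bhTauZeroNegative}, so the one step you flag as the main obstacle is carried out there exactly as you describe.
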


\begin{remark}\rm{
Theorem \ref{thm:OpenRelative} is a special case of a general correspondence between open Gromov-Witten invariants of $(X,L,f)$ and relative Gromov-Witten invariants of $(Y,D)$, which involves invariants of higher genus and general winding/tangency profiles. The general open/relative correspondence can be obtained from Fang-Liu \cite{FL13} (see Proposition 3.4 in the outer brane case) or Li-Liu-Liu-Zhou \cite{LLLZ09}, under the identification of the relative Gromov-Witten invariants of $(Y,D)$ and the formal relative Gromov-Witten invariants of $(\hat{Y}, \hat{D})$.
}\end{remark}

Based on Propositions \ref{prop:OpenLocalResult} and \ref{prop:RelativeLocalResult}, we establish Theorem \ref{thm:OpenRelative} in two steps: First, in Lemma \ref{lem:OpenRelativeZero}, we identify $N^{X,L,f}_{\beta', d}$ with the contribution to $N^{Y,D}_{\hbeta}$ from stable maps with an unexpanded target $(Y,D)$. Second, in Lemma \ref{lem:OpenRelativePositive}, we show that there is no contribution from stable maps with an expanded target $(Y[m],D[m]), m>0$ after the weight restriction $\su_2 - f\su_1 = 0$, as mentioned in the discussion after Proposition \ref{prop:RelativeLocalResult}.

For $\hbeta$ as in Theorem \ref{thm:OpenRelative}, and any $\vGa \in \Gamma^{Y,D,0}_{\hbeta}$, we set
\begin{equation}\label{eqn:RelContribution}
    \hC_{\vGa}:= \frac{\su_2-f\su_1}{|\Aut(\vGa)|} \cdot \prod_{e \in E(\Gamma)} \frac{\bh(\tau_e, d_e)}{d_e} 
         \cdot  \prod_{v \in V(\Gamma)^{(0)}} \int_{\Mbar_{0, E_v}} \frac{\bw(\sigma_v)^{\val(v)-1}}{\prod_{e \in E_v} (\frac{\bw(\tau_e, \sigma_v)}{d_e} - \psi_{(e,v)})}.
\end{equation}
to be the contribution of $\vGa$ to $N^{Y,D}_{\hbeta}$ before the weight restriction $\su_2-f\su_1=0$; similarly, for any $\vGa \in \Gamma^{Y,D}_{\hbeta} \setminus \Gamma^{Y,D,0}_{\hbeta}$, we set
\begin{align*}      
    \hC_{\vGa}: = & \frac{\su_2-f\su_1}{|\Aut(\vGa)|} \cdot  \prod_{e \in E(\Gamma)} \frac{\bh(\tau_e, d_e)}{d_e} 
         \cdot \prod_{v \in V(\Gamma)^{(0)}} \int_{\Mbar_{0, E_v }} \frac{\bw(\sigma_v)^{\val(v)-1}}{\prod_{e \in E_v} (\frac{\bw(\tau_e, \sigma_v)}{d_e} - \psi_{(e,{}v)})}\\
        & \cdot   \prod_{j = 1}^{\ell(\mu(\vGa))}\mu(\vGa)_j  \cdot (-1)^{\ell(\mu(\vGa))-1} (\su_2-f\su_1)^{2 \ell(\mu(\vGa))-2}
        \int_{\Mbar_{\vGa}^{(1)}} \frac{1}{-\su_1- \psi^t} 
\end{align*}
to be the contribution of $\vGa$ before the weight restriction.

\begin{lemma}\label{lem:OpenRelativeZero}
For $\beta', d, \hbeta$ as in Theorem \ref{thm:OpenRelative}, we have
$$    
    N^{X,L,f}_{\beta', d} = (-1)^{d + 1} \sum_{\vGa \in \Gamma^{Y,D,0}_{\hbeta}} \hC_{\vGa} \big|_{\su_2 - f\su_1=0}.
$$
\end{lemma}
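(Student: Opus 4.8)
The plan is to establish Lemma~\ref{lem:OpenRelativeZero} by matching, graph by graph, the localization formula for $N^{X,L,f}_{\beta',d}$ in Proposition~\ref{prop:OpenLocalResult} with the first sum in Proposition~\ref{prop:RelativeLocalResult}. The first step is to set up a bijection $\Phi : \Gamma_{0,1}^0(X,\beta') \to \Gamma^{Y,D,0}_{\hbeta}$: given $\vGa \in \Gamma_{0,1}^0(X,\beta')$ with marked vertex $v_0 := \vs(1)$ (so $\vf(v_0)=\sigma_0$), let $\Phi(\vGa)$ be obtained by deleting the marking at $v_0$, attaching a new edge $e_0$ with $\vf(e_0)=\tau_0$ and $\vd(e_0)=d$ joining $v_0$ to a new vertex $\hv_0$ with $\vf(\hv_0)=\hsi_0$, and placing the single marking at $\hv_0$. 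Since $(\tau_0,\sigma_0)$ and $(\tau_0,\hsi_0)$ are the only flags of $\hSi$ through $\tau_0$, since $\hSi(2)_c = \Sigma(2)_c\sqcup\{\tau_0\}$, and since $\beta'+d[B]$ corresponds to $\hbeta$ under $H_2(X,L;\bZ)\cong H_2(Y;\bZ)$, the graph $\Phi(\vGa)$ indeed lies in $\Gamma^{Y,D,0}_{\hbeta}$, $\Phi$ is a bijection, and $|\Aut(\Phi(\vGa))| = |\Aut(\vGa)|$ because $e_0$ and $\hv_0$ are rigid.

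Next I would match the two contributions across $\Phi$. Every edge $e$ of $\vGa$ survives in $\Phi(\vGa)$ with the same factor $\bh(\tau_e,d_e)/d_e$, using that the $\bh$ of $(Y,D)$ restricts to that of $X$ because $(\Omega_Y(\log D)^\vee)|_X = TX$; and $\Phi(\vGa)$ acquires exactly one further edge $e_0$, contributing $\bh(\tau_0,d)/d$. The vertex set of $\vGa$ is identified with $V(\Gamma)^{(0)}$ of $\Phi(\vGa)$: the integrals over $v\ne v_0$ coincide verbatim, and at $v_0$ the open-side integral over $\Mbar_{0,E_{v_0}\cup\{1\}}$ with numerator $\bw(\sigma_0)^{\val(v_0)-1+1}$ and extra denominator factor $\frac{\su_1}{d}-\psi_1$ equals the relative-side integral over $\Mbar_{0,E_{v_0}\cup\{e_0\}}$ with numerator $\bw(\sigma_0)^{(\val(v_0)+1)-1}$ and the $e_0$-denominator factor $\frac{\bw(\tau_0,\sigma_0)}{d}-\psi_{(e_0,v_0)}$, after relabeling the marking $1$ as the half-edge $(e_0,v_0)$ and using $\bw(\tau_0,\sigma_0)=\su_1$ from \eqref{eqn:Sigma0Wts} together with the conventions \eqref{eqn:UnstableIntegral} in the unstable cases; the new vertex $\hv_0\in V^{1,1}$ has $\Mbar_{0,2}$ a point and contributes the trivial factor. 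Thus the open-side summand for $\vGa$ and $\hC_{\Phi(\vGa)}$ agree up to replacing the Fang-Liu prefactor $(-1)^{fd}\frac{\prod_{k=1}^{d-1}(fd+k)}{d!\,\su_1}$ by $(\su_2-f\su_1)\frac{\bh(\tau_0,d)}{d}$, so the Lemma reduces to the scalar identity
$$
    (-1)^{fd}\,\frac{\prod_{k=1}^{d-1}(fd+k)}{d!\,\su_1}
    \;=\; (-1)^{d+1}\,(\su_2-f\su_1)\,\frac{\bh(\tau_0,d)}{d}\,\bigg|_{\su_2-f\su_1=0},
$$
regularity of the remaining $\bh(\tau_e,d_e)$'s and vertex factors at $\su_2=f\su_1$ being exactly what Assumption~\ref{assump:GenFraming} guarantees, as in the paper's general setup.

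The main obstacle is proving this scalar identity, i.e.\ computing $\bh(\tau_0,d)$. I would use that $l_{\tau_0}\subset Y$ is a $\bP^1$ joining $p_{\sigma_0}$ and $p_{\hsi_0}\in D$, along which $\Omega_Y(\log D)^\vee$ splits $T'$-equivariantly as $\cO(1)\oplus\cO(f)\oplus\cO(-f-1)$: the $\cO(1)$ is the log tangent direction, whose canonical section vanishes at $p_{\hsi_0}\in D$, with fiber weights $\su_1$ at $p_{\sigma_0}$ and $0$ at $p_{\hsi_0}$, while the other two summands form $N_{l_{\tau_0}/Y}$ with weights read off from \eqref{eqn:Sigma0Wts}. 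Pulling back by the degree-$d$ totally ramified cover and taking Euler classes of the moving parts of $H^0$ and $H^1$ writes $\bh(\tau_0,d)$ as a product of three explicit factors; for $f\ge 0$ the $\cO(f)$-summand contributes $\big(\prod_{j=0}^{fd}(\su_2-j\su_1/d)\big)^{-1}$, whose $j=fd$ factor is precisely $\su_2-f\su_1$ and cancels the prefactor $\su_2-f\su_1$, after which the restriction $\su_2=f\su_1$ turns the product of the remaining factors into $\frac{\prod_{k=1}^{d-1}(fd+k)}{d!\,\su_1}$ up to the sign $(-1)^{(f+1)d-1}$ coming from the $\cO(-f-1)$-summand; since $(-1)^{(f+1)d-1}(-1)^{d+1}=(-1)^{fd}$ the identity follows, and genericity of $f$ rules out any residual pole. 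The same calculation covers $f\le 0$ once $\prod_{k=1}^{d-1}(fd+k)$ and the Euler classes of the (now negative-degree) pulled-back line bundles are read with the standard conventions, so the sign bookkeeping is the only point requiring real care.
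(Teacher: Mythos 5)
Your proposal is correct and follows essentially the same route as the paper: the same bijection $\Gamma_{0,1}^0(X,\beta')\leftrightarrow\Gamma^{Y,D,0}_{\hbeta}$ obtained by trading the marked point for the new vertex $\hv_0$ and edge $e_0$ over $l_{\tau_0}$, the same graph-by-graph matching of edge and vertex factors reducing the lemma to the scalar identity \eqref{eqn:OpenRelContributionCompare}, and the same explicit evaluation of $(\su_2-f\su_1)\bh(\tau_0,d)\big|_{\su_2-f\su_1=0}$ with the case split on the sign of $f$ (your sign $(-1)^{(f+1)d-1}$ agrees with the paper's $(-1)^{(f+1)d+1}$). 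The $f<0$ case, which you flag as needing care, is carried out explicitly in the paper as \eqref{eqn:bhTauZeroNegative} and yields the same closed form as $f\ge 0$.
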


\begin{proof}
First, we show that there is a natural bijection between the sets of decorated graphs
$$
    \Gamma_{0, 1}^0(X, \beta') \longleftrightarrow \Gamma^{Y,D,0}_{\hbeta}.
$$
Given $\vGa' \in \Gamma_{0,1}^0(X, \beta')$, we may obtain a decorated graph $\vGa \in \Gamma^{Y,D,0}_{\hbeta}$ by replacing the marked point of $\vGa'$ by a new vertex $\hv_0$ with label $\vf(\hv_0) = \hsi_0$, a new edge $e_0$ connecting $\hv_0$ to $\vs(1)$ with degree $\vd(e_0) = d$ and label $\vf(e_0) = \tau_0$, and a new marked point $1$ with $\vs(1) = \hv_0$. Conversely, given $\vGa \in \Gamma^{Y,D,0}_{\hbeta}$, we may obtain a decorated graph $\vGa' \in \Gamma_{0,1}^0(X, \beta')$ by removing $\hv_0(\vGa)$ and $e_0(\vGa)$ and moving the marked point to the place of the unique flag in $\vf^{-1}(\tau_0, \sigma_0)$. 

Now let $\vGa' \in \Gamma_{0, 1}^0(X, \beta')$ and $\vGa \in \Gamma^{Y,D,0}_{\hbeta}$ be a pair of corresponding decorated graphs. Note that $\Aut(\vGa') = \Aut(\vGa)$. We compare the contribution of $\vGa'$ to $N^{X,L,f}_{\beta', d}$ in Proposition \ref{prop:OpenLocalResult}, which is
$$
    (-1)^{fd}\frac{\prod_{k = 1}^{d-1}(fd+k)}{ d!\su_1 \cdot |\Aut(\vGa')|} \cdot \prod_{e \in E(\Gamma')} \frac{\bh(\tau_e, d_e)}{d_e} \cdot  \prod_{v \in V(\Gamma')} \int_{\Mbar_{0, E_v \cup S_v}} \frac{\bw(\sigma_v)^{\val(v)-1+n_v}}{ (\frac{\su_1}{d} - \psi_1)^{n_v} \cdot \prod_{e \in E_v}(\frac{\bw(\tau_e, \sigma_v)}{d_e} - \psi_{(e,v)})} \bigg|_{\su_2 - f\su_1 = 0},
$$
to $\hC_{\vGa} \big|_{\su_2 -f\su_1=0}$. It amounts to showing that
\begin{equation}\label{eqn:OpenRelContributionCompare}
    (-1)^{fd}\frac{\prod_{k = 1}^{d-1}(fd+k)}{ d!\su_1} = (-1)^{d +1}(\su_2-f\su_1) \frac{\bh(\tau_0, d)}{d} \bigg|_{\su_2 - f\su_1 = 0}.
\end{equation}
We compute that if $f\ge 0$,
\begin{equation}\label{eqn:bhTauZeroPositive}
    \begin{aligned}
        (\su_2-f\su_1)\bh(\tau_0, d)\bigg|_{\su_2 - f\su_1 = 0} 
        =& \frac{d^d}{d!\su_1^d} \cdot 
            \frac{(-\frac{d-1}{d}\su_1 - \su_2)(-\frac{d-2}{d}\su_1 - \su_2) \cdots (\frac{df-1}{d}\su_1 - \su_2)}{\su_2(-\frac{\su_1}{d}+\su_2)(-\frac{2\su_1}{d}+\su_2) \cdots (-\frac{df-1}{d}\su_1+\su_2)} \bigg|_{\su_2 - f\su_1 = 0}\\
        =& \frac{d^d}{d!\su_1} \cdot
            \frac{(-\frac{d-1}{d} - f)(-\frac{d-2}{d} - f) \cdots (\frac{df-1}{d} - f)}{f(-\frac{1}{d} + f)(-\frac{2}{d}+f) \cdots (-\frac{df-1}{d}+f)}\\
        =& \frac{(-1)^{(f+1)d+1} d}{d!\su_1} \cdot \prod_{k=1}^{d-1}(fd + k);
    \end{aligned}
\end{equation}
on the other hand, if $f <0$,
\begin{equation}\label{eqn:bhTauZeroNegative}
    \begin{aligned}
        (\su_2-f\su_1)\bh(\tau_0, d)\bigg|_{\su_2 - f\su_1 = 0}
        =& -\frac{d^d}{d!\su_1^d} \cdot
            \frac{(\frac{\su_1}{d}+\su_2)(\frac{2\su_1}{d}+\su_2) \cdots (-\frac{df+1}{d}\su_1+\su_2)}{(-\su_1 - \su_2)(-\frac{d+1}{d}\su_1 - \su_2)(-\frac{d+2}{d}\su_1 - \su_2) \cdots (\frac{df+1}{d}\su_1 - \su_2)} \bigg|_{\su_2 - f\su_1 = 0}\\
        =& -\frac{d^d}{d!\su_1} \cdot
            \frac{(\frac{1}{d}+f)(\frac{2}{d}+f) \cdots (-\frac{df+1}{d}+f)}{(-1-f)(-\frac{d+1}{d} - f)(-\frac{d+2}{d} - f) \cdots (\frac{df+1}{d} - f)}\\
        =& \frac{(-1)^{(f+1)d+1} d}{d!\su_1} \cdot \prod_{k=1}^{d-1}(fd + k),
    \end{aligned}
\end{equation}
which is the same as the $f \ge 0$ case. Therefore, \eqref{eqn:OpenRelContributionCompare} directly follows.
\end{proof}

\begin{lemma}\label{lem:OpenRelativePositive}
Let $\hbeta \in H_2(Y;\bZ)$ be an effective curve class of $Y$ such that $d := \hbeta \cdot D>0$. Then for any $\vGa \in \Gamma^{Y,D}_{\hbeta} \setminus \Gamma^{Y,D,0}_{\hbeta}$, we have
$$
    \hC_{\vGa} \big|_{\su_2 - f\su_1= 0} = 0.
$$
In particular,
$$
    N^{Y,D}_{\hbeta} = \sum_{\vGa \in \Gamma^{Y,D,0}_{\hbeta}} \hC_{\vGa} \big|_{\su_2-f\su_1 = 0}.
$$
\end{lemma}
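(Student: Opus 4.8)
The plan is to estimate, for a decorated graph $\vGa \in \Gamma^{Y,D}_{\hbeta} \setminus \Gamma^{Y,D,0}_{\hbeta}$, the order of vanishing of the rational function $\hC_{\vGa}$ along the hyperplane $\su_2 - f\su_1 = 0$, and to show it is strictly positive. Write $\ell := \ell(\mu(\vGa)) \geq 2$. If the order is at least $\ell - 1 \geq 1$, then $\hC_{\vGa}\big|_{\su_2 - f\su_1 = 0} = 0$, and the ``in particular'' statement is then immediate from Proposition \ref{prop:RelativeLocalResult}.

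First I would read off from the formula defining $\hC_{\vGa}$ the explicit factors $(\su_2 - f\su_1)$ and $(\su_2 - f\su_1)^{2\ell - 2}$ — the latter originating, as noted after \eqref{eqn:RelVirNormalPositive}, from the moving part of the node-smoothing obstructions $\bigoplus_i H^1_{\et}(\bfR_i^\bullet)$ — which together contribute order $2\ell - 1$. Next I would account for the edge factors $\prod_{e} \bh(\tau_e, d_e)$. The graph $\vGa$ has exactly $\ell$ edges lying over $\tau_0$, all incident to the unique vertex $\hv_0(\vGa)$ with label $\hsi_0$ (since $\tau_0$ is a facet only of $\sigma_0$ and $\hsi_0$), carrying degrees $\mu(\vGa)_1, \dots, \mu(\vGa)_\ell$; every other edge lies over some $\tau_e \in \Sigma(2)_c$. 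For an edge over $\tau_0$, the computation already recorded in \eqref{eqn:bhTauZeroPositive}--\eqref{eqn:bhTauZeroNegative} gives
$$
    (\su_2 - f\su_1)\,\bh(\tau_0, d)\big|_{\su_2 - f\su_1 = 0} = \frac{(-1)^{(f+1)d+1}\,d}{d!\,\su_1}\prod_{k=1}^{d-1}(fd+k),
$$
which is nonzero for every $d \in \bZ_{>0}$ (indeed $fd + k \neq 0$ whenever $1 \le k \le d-1$, $f \in \bZ$). Hence $\bh(\tau_0, d)$ has a pole of order exactly one along $\su_2 - f\su_1 = 0$ — geometrically, because the normal-bundle weights of $l_{\tau_0} \subset Y$ at $p_{\hsi_0}$ degenerate under the restriction — so the $\ell$ edges over $\tau_0$ contribute order $-\ell$.

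It then remains to check that no remaining factor has negative order. The factors $\bh(\tau_e, d_e)$ with $\tau_e \in \Sigma(2)_c$, the vertex integrals $\int_{\Mbar_{0, E_v}}(\cdots)$ for $v \in V(\Gamma)^{(0)}$ (so that $\sigma_v \in \Sigma(3)$), and the unstable-vertex contributions \eqref{eqn:UnstableIntegral} all involve only the flag weights $\bw(\tau, \sigma)$ for $(\tau, \sigma) \in F(\Sigma)$ and a finite list of explicit rational-linear combinations of them (the weights in the $H^0$-moving parts and the denominators appearing in \eqref{eqn:UnstableIntegral}); by genericity of $f$ (Assumption \ref{assump:GenFraming}), none of these vanishes after setting $\su_2 = f\su_1$, so all of these factors are regular there, of order $\geq 0$. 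The remaining target-psi integral satisfies $\int_{\Mbar^{(1)}_{\vGa}} \frac{1}{-\su_1 - \psi^t} = (-\su_1)^{1-\ell}\int_{\Mbar^{(1)}_{\vGa}}(\psi^t)^{\ell - 2}$, which depends only on $\su_1$, hence has order $0$. Summing contributions, the order of $\hC_{\vGa}$ along $\su_2 - f\su_1 = 0$ is at least $(2\ell - 1) - \ell = \ell - 1 > 0$, which gives the vanishing.

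The step I expect to require the most care is confirming that $\bh(\tau_0, d)$ has a pole of order \emph{exactly} one — not higher (which would be incompatible with \eqref{eqn:bhTauZeroPositive}--\eqref{eqn:bhTauZeroNegative} producing a finite limit) and not zero (which it plainly is not) — and, in the same vein, ensuring that genericity of $f$ is invoked not merely for the individual flag weights of $\Sigma$ but for the entire finite collection of linear combinations of them occurring in the localization formula, so that every one of those factors is genuinely regular after the restriction $\su_2 = f\su_1$.
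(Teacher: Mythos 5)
Your proposal is correct and follows essentially the same route as the paper's proof: isolate the explicit prefactor $(\su_2-f\su_1)^{2\ell-1}$, observe via the computation in \eqref{eqn:bhTauZeroPositive}--\eqref{eqn:bhTauZeroNegative} that each of the $\ell$ edge factors $\bh(\tau_0,d_e)$ contributes a simple pole along $\su_2-f\su_1=0$, and use genericity of $f$ together with \eqref{eqn:PsiIntegral} to rule out further poles from the vertex integrals, giving total order $\ell-1>0$. The only difference is presentational: you additionally spell out why the pole of $\bh(\tau_0,d)$ is of order exactly one and why the target-psi integral is $(\su_2-f\su_1)$-regular, points the paper leaves implicit.
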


We will use the following identity (see e.g. \cite[Lemma 61(a)]{Liu13}):
\begin{equation}\label{eqn:PsiIntegral}
    \int_{\Mbar_{0,n}}\frac{1}{\prod_{j = 1}^n (\bw_j - \psi_j)} = \frac{1}{\bw_1 \cdots \bw_n} \left(\frac{1}{\bw_1} + \cdots + \frac{1}{\bw_n} \right)^{n-3},
\end{equation}
which is consistent with the integration convention \eqref{eqn:UnstableIntegral}.

\begin{proof}
Let $\vGa \in \Gamma^{Y,D}_{\hbeta} \setminus \Gamma^{Y,D,0}_{\hbeta}$. We determine the power of $\su_2-f\su_1$ in $\hC_{\vGa}$. For our generic choice of $f$ (Assumption \ref{assump:GenFraming}), $\bw(\tau_e,\sigma_v) \neq \pm (\su_2-f\su_1)$ for any $(\tau_e,\sigma_v) \in F(\Gamma)$. Then, \eqref{eqn:PsiIntegral} implies that $\su_2-f\su_1$ is not a factor of the denominator of
$$
    \prod_{v \in V(\Gamma)^{(0)}} \int_{\Mbar_{0, E_v}} \frac{1}{\prod_{e \in E_v}(\frac{\bw(\tau_e,\sigma_v)}{d_e} - \psi_{(e,v)})}.
$$
It suffices to focus on the term
$$
    (\su_2-f\su_1)^{2\ell(\mu(\vGa))-1}  \prod_{e \in \vf^{-1}(\tau_0)}\bh(\tau_e, d_e).
$$
By a computation similar to \eqref{eqn:bhTauZeroPositive} and \eqref{eqn:bhTauZeroNegative}, the power of $\su_2-f\su_1$ in $\bh(\tau_0, a)$ is $-1$ for any $a \in \bZ_{>0}$. Therefore, the total power of $\su_2-f\su_1$ is $\ell(\mu(\vGa)) - 1$ 
which is strictly positive since $\vGa \not \in \Gamma^{Y,D,0}_{\hbeta}$. This implies that $\hC_{\vGa} \big|_{\su_2-f\su_1=0} = 0$.
\end{proof}

\subsection{Relative/local correspondence}\label{sect:RelLocal}
In this section, we identify the maximally-tangent relative Gromov-Witten invariants of $(Y,D)$ and the closed Gromov-Witten invariants of $\tX = \cO_Y(-D)$ in genus zero. Recall from Section \ref{sect:Homology} that the natural isomorphism $H_2(Y;\bZ) \cong H_2(\tX;\bZ)$ identifies effective curve classes.

\begin{theorem}[Relative/local correspondence]\label{thm:RelativeLocal}
Let $\hbeta \in H_2(Y; \bZ)$ be an effective class of $Y$ and $\tbeta \in H_2(\tX;\bZ)$ be the corresponding effective class of $\tX$, such that $d:= \hbeta \cdot D = \tbeta \cdot \tD >0$. Then
\begin{equation}\label{eqn:RelativeLocalThm}
     N^{Y,D}_{\hbeta} = (-1)^{d + 1}d N^{\tX}_{\tbeta,0} = (-1)^{d + 1} N^{\tX}_{\tbeta,1}.
\end{equation}
\end{theorem}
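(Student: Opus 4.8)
The plan is to establish \eqref{eqn:RelativeLocalThm} by a direct comparison of the localization formulas already derived in Propositions \ref{prop:RelativeLocalResult} and \ref{prop:ClosedLocalResult}, exploiting the fact (from the flag decomposition \eqref{eqn:FlagDecomp} and the weight relations \eqref{eqn:TanWtsEqual}, \eqref{eqn:Sigma0TanWts}) that the decorated graphs for $\tX$ are obtained from those for $(Y,D)$ by attaching, at a fixed point $p_{\iota(\hsi)}$, extra vertices/edges living in the new $v_4$-direction. Concretely, I would first invoke Lemma \ref{lem:DivisorAxiom} to reduce to proving $N^{Y,D}_{\hbeta} = (-1)^{d+1} d\, N^{\tX}_{\tbeta,0}$, so that only the $0$-pointed invariant \eqref{eqn:ClosedLocalResult0} is in play. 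By Lemma \ref{lem:OpenRelativePositive}, the left side is a sum over $\vGa \in \Gamma^{Y,D,0}_{\hbeta}$ only, i.e. over graphs with a single edge $e_0$ of degree $d$ over $\tau_0$ carrying the marked point at the vertex $\hv_0$ labeled $\hsi_0$.

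The core step is a bijection $\Gamma^{Y,D,0}_{\hbeta} \leftrightarrow \Gamma^{0}_{0,0}(\tX,\tbeta)$ onto the subset of $\tT'$-graphs that contribute, matching $\hC_{\vGa}\big|_{\su_2-f\su_1=0}$ (up to the sign $(-1)^{d+1}$ and factor $d$) with the graph contribution in \eqref{eqn:ClosedLocalResult0}. Given $\vGa \in \Gamma^{Y,D,0}_{\hbeta}$, the corresponding $\tX$-graph $\tvGa$ is obtained by viewing $\Gamma$ inside $\tSi$ via $\iota$ on all vertices/edges with $\vf \ne \hsi_0$ and keeping the edge $e_0$ over $\iota(\tau_0)$, then \emph{removing} the marked point (since $\tX$-invariants are $0$-pointed); the vertex $\hv_0$ becomes $\iota(\hsi_0)$, which is now a valence-$1$ vertex of $\tvGa$. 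On contributions: the factors $\bw(\sigma_v)^{\val(v)-1}$ and $\prod_e \bh(\tau_e,d_e)$ over $v \in V(\Gamma)^{(0)}$, $e \ne e_0$ match their tilde counterparts after $\su_4 = 0$ by \eqref{eqn:TanWtsEqual}; the new data at $\iota(\hsi_0)$ contributes a single extra edge factor $\tbw(\hsi_0,\iota(\hsi_0)) = \su_1+\su_4$ and $\tbh(\iota(\tau_0),d)$ versus $\bh(\tau_0,d)$, the difference of which, after restriction, produces exactly the $(-1)^{d+1}d$ discrepancy; this is the one calculation I would actually carry out, parallel to \eqref{eqn:bhTauZeroPositive}--\eqref{eqn:bhTauZeroNegative}.

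The main obstacle is showing that the ``extra'' $\tT'$-fixed graphs — those with two or more irreducible $\bP^1$'s over $\bar l$ of degrees $d_1,\dots,d_k>0$ summing to $d$, or more generally graphs where the $v_4$-direction edge at $\iota(\hsi)$ for some $\hsi \ne \hsi_0$ is occupied, or where the chain over $\bar l$ branches — do not contribute after the specialization $\su_4 = 0$, $\su_2 - f\su_1 = 0$. I would handle this exactly as in Lemma \ref{lem:OpenRelativePositive}: use the identity \eqref{eqn:PsiIntegral} to see that $\su_2 - f\su_1$ never enters denominators (for generic $f$ by Assumption \ref{assump:GenFraming}), then track the power of $\su_2 - f\su_1$ in $\prod_e \tbh(\ttau_e,d_e)\cdot\prod_v \tbw(\tsi_v)^{\val(v)-1}$; every vertex over $\hsi_0$ or edge over $\tau_0$ beyond the first contributes net positive powers of $\su_2-f\su_1$ (from the weights $-f\su_1+\su_2$, $f\su_1-\su_2-\su_4$ in \eqref{eqn:tSigma0Wts} and the $\tbh$ computation), so such graphs vanish under the restriction. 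One must also check that edges in the pure $v_4$-direction at a vertex $\iota(\hsi)$ with $\hsi\ne\hsi_0$ give a vanishing contribution, using $\tbw(\hsi,\iota(\hsi)) = \su_4$ from \eqref{eqn:Sigma0TanWts} and the fact that $T\tX$ restricted to such an edge has a trivial $\tT'$-summand forcing $e_{\tT'}(H^0)$ to vanish at $\su_4=0$. Once these vanishings are in place, the surviving sum is term-by-term identified with the right side of Proposition \ref{prop:RelativeLocalResult} (restricted to $\Gamma^{Y,D,0}_{\hbeta}$), completing the proof.
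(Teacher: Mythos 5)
Your overall strategy is the paper's: reduce to the $0$-pointed invariant via Lemma \ref{lem:DivisorAxiom}, match $\Gamma^{Y,D,0}_{\hbeta}$ bijectively with $\Gamma^0_{0,0}(\tX,\tbeta)$ and extract the factor $(-1)^{d+1}/d$ from the ratio $\tbh(\iota(\tau_0),d)/\bh(\tau_0,d)$ together with the unstable-vertex convention at $\hv_0$, then kill all other graphs by a weight count (Lemmas \ref{lem:RelativeLocalVal1} and \ref{lem:RelativeLocalValLarge}). But your vanishing argument has a genuine gap: you assert that every vertex over $\iota(\hsi_0)$ or edge over $\iota(\tau_0)$ beyond the first contributes a net positive power of $\su_2-f\su_1$, and you propose to conclude from tracking $\su_2-f\su_1$ alone. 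This fails precisely for the graphs you flag as the main obstacle, namely those with two or more vertices labeled $\iota(\hsi_0)$. Take a vertex $w$ over $\iota(\sigma_0)$ of valence two, joined by edges of degrees $d_1,d_2$ (with $d_1+d_2=d$) over $\iota(\tau_0)$ to two valence-one vertices over $\iota(\hsi_0)$: the factors $\tbw(\iota(\hsi_0))^{\val(v_i)-1}=1$ contribute nothing, each $\tbh(\iota(\tau_0),d_i)$ carries $(\su_2-f\su_1)^{-1}$, and the global prefactor supplies one positive power, so the total power of $\su_2-f\su_1$ in $\tC_{\vGa}$ is $-1$: a pole, not a zero, along $\su_2=f\su_1$. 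Such graphs vanish only because of a strictly positive power of $\su_4$ (here the factor $\su_4$ inside $\tbw(\iota(\sigma_0))^{\val(w)-1}$, cf. \eqref{eqn:tSigma0Wts}). The paper's Lemma \ref{lem:RelativeLocalValLarge} first shows the $\su_4$-power of $\tC_{\vGa}$ equals $|\vf^{-1}(\iota(\hsi_0))|-1$, disposes of the case $|\vf^{-1}(\iota(\hsi_0))|\ge 2$ at $\su_4=0$, and only in the residual case of a single vertex over $\iota(\hsi_0)$ runs your $\su_2-f\su_1$ count, which there gives $|\vf^{-1}(\iota(\tau_0))|-1\ge 1$. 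Without the $\su_4$ step your argument does not close, and the order of the two restrictions matters since these terms are singular along $\su_2=f\su_1$.

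A related imprecision: the factors away from $\hsi_0$ do not ``match their tilde counterparts after $\su_4=0$'' term by term. By \eqref{eqn:Sigma0TanWts}, $\tbw(\iota(\hsi))$ carries an extra factor $\su_4$ for every $\hsi\neq\hsi_0$, and $\tbh(\iota(\htau),a)$ has a simple pole in $\su_4$ for every $\htau\neq\tau_0$ (the trivial $\cO_Y(-D)$-summand along such an edge has nonzero $\tT'$-weight $\su_4$ on its $H^0$); these cancel only globally, via the tree identity $1-|E(\Gamma)|+\sum_v(\val(v)-1)=0$, which is how the paper combines \eqref{eqn:tbwSigma}--\eqref{eqn:tbhTauZero} to get total $\su_4$-power zero on $\Gamma^0_{0,0}(\tX,\tbeta)$. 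This bookkeeping needs to be done explicitly, both to justify setting $\su_4=0$ in Lemma \ref{lem:RelativeLocalVal1} and to get the power count in Lemma \ref{lem:RelativeLocalValLarge}. Finally, your worry about edges in the pure $v_4$-direction is moot: $\iota$ restricts to a bijection $\hSi(2)_c\to\tSi(3)_c$, so $\tX$ has no compact torus-invariant lines beyond those of $Y$ and no decorated graph for $\tX$ carries such an edge.
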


By Lemma \ref{lem:DivisorAxiom}, to be proven shortly, we will mainly focus on the correspondence between $N^{Y,D}_{\hbeta}$ and $N^{\tX}_{\tbeta,0}$. Based on Propositions \ref{prop:RelativeLocalResult}, \ref{prop:ClosedLocalResult} and Lemma \ref{lem:OpenRelativePositive}, we first show in Lemma \ref{lem:RelativeLocalVal1} that $N^{Y,D}_{\hbeta}$ is in correspondence with the total contribution to $N^{\tX}_{\tbeta,0}$ from decorated graphs in
$$
    \Gamma^0_{0,0}(\tX, \tbeta):= \{\vGa \in \Gamma_{0,0}(\tX, \tbeta): |\vf^{-1}(\iota(\tau_0))| =1\}.
$$
Each $\vGa = (\Gamma, \vf, \vd) \in \Gamma^0_{0,0}(\tX, \tbeta)$ satisfies the following properties:
\begin{itemize}
    \item There is a unique vertex $\hv_0 = \hv_0(\vGa) \in V(\Gamma)$ such that $\vf(\hv_0) = \iota(\hsi_0)$.
    
    \item There is a unique edge $e_0 = e_0(\vGa) \in E(\Gamma)$ such that $\vf(e_0) = \iota(\tau_0)$. We have $(e_0, \hv_0) \in F(\Gamma)$ and $\vd(e_0) = d$.
\end{itemize}
Then, in Lemma \ref{lem:RelativeLocalValLarge}, we show that there is no contribution from decorated graphs in $\Gamma_{0,0}(\tX, \tbeta) \setminus \Gamma^0_{0,0}(\tX, \tbeta)$ after the restriction $\su_4 = 0$, $\su_2 - f\su_1 = 0$.

For each $\vGa \in \Gamma_{0,0}(\tX, \tbeta)$, we set
$$
    \tC_{\vGa}:= \frac{\su_2-f\su_1}{|\Aut(\vGa)|} \prod_{e \in E(\Gamma)} \frac{\tbh(\ttau_e, d_e)}{d_e} \prod_{v \in V(\Gamma)} \int_{\Mbar_{0, E_v}} \frac{\tbw(\tsi_v)^{\val(v)-1}}{\prod_{e \in E_v}(\frac{\tbw(\ttau_e,\tsi_v)}{d_e} - \psi_{(e,v)})}
$$
to be the contribution of $\vGa$ to $N^{\tX}_{\tbeta,0}$ as in \eqref{eqn:ClosedLocalResult0} in Proposition \ref{prop:ClosedLocalResult} before the restriction $\su_4 = 0, \su_2 - f\su_1 = 0$.

\begin{lemma}\label{lem:RelativeLocalVal1}
For $\hbeta, \tbeta$ as in Theorem \ref{thm:RelativeLocal}, we have
\begin{equation}\label{eqn:RelativeLocalVal1}
    \frac{(-1)^{d + 1}}{d} N^{Y,D}_{\hbeta} = \sum_{\vGa \in \Gamma^0_{0,0}(\tX, \tbeta)} \tC_{\vGa} \big|_{\su_4 = 0, \su_2 - f\su_1 = 0}.
\end{equation}
\end{lemma}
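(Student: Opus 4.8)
By Lemma~\ref{lem:OpenRelativePositive} we have $N^{Y,D}_{\hbeta}=\sum_{\vGa\in\Gamma^{Y,D,0}_{\hbeta}}\hC_{\vGa}\big|_{\su_2-f\su_1=0}$, so \eqref{eqn:RelativeLocalVal1} will follow once we produce a bijection $\Phi\colon\Gamma^{Y,D,0}_{\hbeta}\to\Gamma^0_{0,0}(\tX,\tbeta)$ such that, for each corresponding pair $\vGa\leftrightarrow\vGa'=\Phi(\vGa)$,
$$\hC_{\vGa}\big|_{\su_2-f\su_1=0}=(-1)^{d+1}d\cdot\tC_{\vGa'}\big|_{\su_4=0,\ \su_2-f\su_1=0}.$$
The bijection is the obvious one: given $\vGa\in\Gamma^{Y,D,0}_{\hbeta}$, delete the marked point (which sits on the distinguished vertex $\hv_0$ over $\hsi_0$) and transport the labels of all vertices and edges through the injections $\iota\colon\hSi(k)\hookrightarrow\tSi(k+1)$ of Section~\ref{sect:4FoldCons}. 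Since $\iota$ restricts to a bijection $\hSi(2)_c\cong\tSi(3)_c$, is compatible with flags, and identifies effective curve classes of $Y$ and $\tX$ sending $\hbeta$ to $\tbeta$, the result $\vGa'$ is a decorated graph for $\tX$; because $\ell(\mu(\vGa))=1$ the unique $\tau_0$-edge $e_0$ of $\vGa$ becomes the unique $\iota(\tau_0)$-edge of $\vGa'$, so $\vGa'\in\Gamma^0_{0,0}(\tX,\tbeta)$. The inverse re-attaches a marked point at the vertex over $\iota(\hsi_0)$; that this is well-defined uses the facts (recorded in Section~\ref{sect:RelLocal}) that every $\vGa'\in\Gamma^0_{0,0}(\tX,\tbeta)$ has exactly the shape described there, i.e. a unique valence-one vertex $\hv_0$ over $\iota(\hsi_0)$ joined by the degree-$d$ edge $e_0$ over $\iota(\tau_0)$ to the rest of the graph, which lies entirely in the ``$X$-part'' ($l_{\iota(\tau)}\cdot\tD=0$ for $\tau\in\Sigma(2)_c$ and $l_{\iota(\tau_0)}\cdot\tD=1$, and $l_{\iota(\tau_0)}$ is the only compact torus line through $p_{\iota(\hsi_0)}$). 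It is immediate that $|\Aut(\vGa)|=|\Aut(\vGa')|$.

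\textbf{Matching the $X$-part: the $\su_4$-bookkeeping.} Fix corresponding $\vGa\leftrightarrow\vGa'$ and write out $\hC_{\vGa}$ from \eqref{eqn:RelContribution} and $\tC_{\vGa'}$ from \eqref{eqn:ClosedLocalResult0}. The prefactor $\tfrac{\su_2-f\su_1}{|\Aut|}$ agrees, and by convention \eqref{eqn:UnstableIntegral} the valence-one vertex of $\vGa'$ over $\iota(\hsi_0)$ contributes exactly $\tbw(\iota(\tau_0),\iota(\hsi_0))/d=-\su_1/d$ (by \eqref{eqn:tSigma0Wts}). For an $X$-edge $e$ (so $\tau_e\in\Sigma(2)_c$, $l_{\tau_e}\subset X$) one has $T\tX|_{l_{\tau_e}}=TY|_{l_{\tau_e}}\oplus N_{Y/\tX}|_{l_{\tau_e}}$ with $N_{Y/\tX}|_{l_{\tau_e}}=\cO_Y(-D)|_{l_{\tau_e}}\cong\cO$ of $\tT'$-weight $\su_4$ (degree $0$ as $D\cap X=\varnothing$, weight read from $\tbw(\hsi,\iota(\hsi))=\su_4$ in \eqref{eqn:Sigma0TanWts}), so $\tbh(\iota(\tau_e),d_e)=\su_4^{-1}\hbh(\tau_e,d_e)$ where $\hbh(\tau_e,d_e)$ is regular at $\su_4=0$ and restricts there to $\bh(\tau_e,d_e)$; similarly, by \eqref{eqn:FlagDecomp}, \eqref{eqn:TanWtsEqual}, \eqref{eqn:Sigma0TanWts}, for an $X$-vertex $v'$ over $\sigma_{v'}\in\Sigma(3)$ the four flags at $\iota(\sigma_{v'})$ are $\iota$ of the three flags at $\sigma_{v'}$ in $\Sigma$ together with $(\sigma_{v'},\iota(\sigma_{v'}))$ of weight $\su_4$, whence $\tbw(\iota(\sigma_{v'}))=\su_4\cdot(\text{a factor regular at }\su_4=0\text{ restricting to }\bw(\sigma_{v'}))$. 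Pulling the scalar $\su_4^{\val(v')-1}$ out of the integral over $\Mbar_{0,E_{v'}}$ for each $X$-vertex, the $X$-vertices contribute a total power $\su_4^{\sum_{v'}(\val(v')-1)}$, and since $\Gamma$ is a tree with $\val(\hv_0)=1$ this exponent equals $|E(\Gamma)|-1$, the number of $X$-edges; it therefore cancels the product of the poles $\su_4^{-1}$ coming from the $X$-edges. After this cancellation every surviving factor is regular at $\su_4=0$ (for generic $f$, none of the relevant weights vanishes there), and $\tbw(\iota(\ttau_e),\iota(\sigma_{v'}))|_{\su_4=0}=\bw(\tau_e,\sigma_{v'})$ by \eqref{eqn:TanWtsEqual}, so the $X$-part of $\tC_{\vGa'}|_{\su_4=0}$ becomes verbatim the $X$-part of $\hC_{\vGa}$ (the $e_0$-edge aside).

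\textbf{The edge $e_0$.} What remains is a single-edge identity. Using $\Omega_Y(\log D)^\vee|_{l_{\tau_0}}\cong\cO(1)\oplus\cO(f)\oplus\cO(-f-1)$ (the $\log$ structure twisting down the tangent direction at $p_{\hsi_0}$ by one) and $T\tX|_{l_{\iota(\tau_0)}}\cong\cO(2)\oplus\cO(f)\oplus\cO(-f-1)\oplus\cO(-1)$, with all equivariant weights read from \eqref{eqn:Sigma0Wts} and \eqref{eqn:tSigma0Wts}, one checks that at $\su_4=0$ the two $\cO(f)\oplus\cO(-f-1)$ summands are equivariantly identical and cancel in the ratio $\tbh(\iota(\tau_0),d)/\bh(\tau_0,d)$. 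The ratio is then the $H^\bullet$-Euler-class ratio of $f_d^*\bigl(\cO(2)_{\su_1}+\cO(-1)_0-\cO(1)_{\su_1}\bigr)$, a pure computation in $\su_1$ entirely parallel to \eqref{eqn:bhTauZeroPositive}--\eqref{eqn:bhTauZeroNegative}, yielding $\bh(\tau_0,d)=(-1)^d\,\su_1\,\tbh(\iota(\tau_0),d)\big|_{\su_4=0}$ as rational functions of $\su_1,\su_2$. Combining this with the $-\su_1/d$ from the $\iota(\hsi_0)$-vertex, the ``$e_0$'' part of $\hC_{\vGa}$ (namely $\bh(\tau_0,d)/d$) relates to the ``$e_0$ and $\iota(\hsi_0)$'' part of $\tC_{\vGa'}$ (namely $(\tbh(\iota(\tau_0),d)/d)\cdot(-\su_1/d)$) by the factor $(-1)^d\su_1\cdot\tfrac{d}{-\su_1}=(-1)^{d+1}d$, while all other factors match after setting $\su_4=0$. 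Summing over the bijection $\Gamma^{Y,D,0}_{\hbeta}\cong\Gamma^0_{0,0}(\tX,\tbeta)$ gives \eqref{eqn:RelativeLocalVal1}.

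\textbf{Expected difficulty.} The delicate point is the $\su_4$-bookkeeping of the second step: individually, the $\tbh$'s of the $X$-edges have simple poles along $\su_4=0$ while the vertex weights $\tbw(\iota(\sigma_v))$ have zeros there, so one cannot set $\su_4=0$ term by term and must instead carry out the cancellation (whose exactness rests on the tree identity $\sum_{v'}(\val(v')-1)=|E|-1$) before restricting. The edge identity for $e_0$ is routine but, as in Lemma~\ref{lem:OpenRelativeZero}, one should treat $f\ge 0$ and $f<0$ separately when extracting the weights of $H^0$ and $H^1$ of the pulled-back line bundles.
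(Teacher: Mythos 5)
Your proposal is correct and follows essentially the same route as the paper's proof: the same bijection $\Gamma^{Y,D,0}_{\hbeta}\leftrightarrow\Gamma^0_{0,0}(\tX,\tbeta)$ obtained by forgetting the marked point and relabelling via $\iota$, the same $\su_4$-power bookkeeping via the tree identity $1-|E(\Gamma)|+\sum_v(\val(v)-1)=0$ together with the observation that \eqref{eqn:PsiIntegral} keeps $\su_4$ out of the denominators of the vertex integrals, and the same single-edge identity $\tbh(\iota(\tau_0),d)\big|_{\su_4=0}=\frac{(-1)^d}{\su_1}\,\bh(\tau_0,d)$ combined with the $-\su_1/d$ from the valence-one vertex to produce the factor $\frac{(-1)^{d+1}}{d}$. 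Your packaging of the edge computation as the Euler-class ratio of the virtual bundle $f_d^*(\cO(2)+\cO(-1)-\cO(1))$ is a harmless reorganization of the paper's formula \eqref{eqn:tbhTauZero}.
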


\begin{proof}
We use the computation of $N^{Y,D}_{\hbeta}$ in Proposition \ref{prop:RelativeLocalResult} and the vanishing result Lemma \ref{lem:OpenRelativePositive}. First, note that the bijective map $\iota: \hSi(2)_c \sqcup \hSi(3) \to \tSi(3)_c \sqcup \tSi(4)$ of cones induces a natural bijection
$$
    \Gamma^{Y,D,0}_{\hbeta} \longleftrightarrow \Gamma^0_{0,0}(\tX, \tbeta)
$$
where, given $\vGa \in \Gamma^{Y,D,0}_{\hbeta}$, the corresponding decorated graph in $\Gamma^0_{0,0}(\tX, \tbeta)$ is formed by forgetting the marked point and post-composing the label map $\vf$ with the map $\iota$ of cones.

Now let $\vGa \in \Gamma^{Y,D,0}_{\hbeta}$ and $\vGa' \in \Gamma^0_{0,0}(\tX, \tbeta)$ be a pair of corresponding decorated graphs. Let $\hv_0 = \hv_0(\vGa) = \hv_0(\vGa')$ and $e_0 = e_0(\vGa) = e_0(\vGa')$. Note that $\Aut(\vGa) = \Aut(\vGa')$. We compare the contribution $\tC_{\vGa'}$ to the contribution $\hC_{\vGa}$ defined in \eqref{eqn:RelContribution}.

We first determine the power of $\su_4$ in $\tC_{\vGa'}$. We start by considering the term
$$
    \prod_{e \in E(\Gamma)} \tbh(\ttau_e, d_e) \prod_{v \in V(\Gamma)} \tbw(\tsi_v)^{\val(v)-1}.
$$
From \eqref{eqn:Sigma0Wts}, \eqref{eqn:TanWtsEqual}, \eqref{eqn:Sigma0TanWts}, we see that for any $\hsi \in \hSi(3)$ with $\hsi \neq \hsi_0$, $\su_4$ has power $1$ in $\tbw(\iota(\hsi))$ and
\begin{equation}\label{eqn:tbwSigma}
    \frac{\tbw(\iota(\hsi))}{\su_4} \bigg|_{\su_4 = 0} = \bw(\hsi).
\end{equation}
Moreover, we can compute that for any $a \in \bZ_{>0}$ and $\htau \in \hSi(2)_c$ with $\htau \neq \tau_0$, $\su_4$ has power $-1$ in $\tbh(\iota(\htau), a)$ and
\begin{equation}\label{eqn:tbhTau}
    \su_4 \tbh(\iota(\htau), a) \big|_{\su_4 = 0} = \bh(\htau, a);
\end{equation}
for any $a \in \bZ_{>0}$, $\su_4$ has power $0$ in $\tbh(\iota(\tau_0), a)$ and
\begin{equation}\label{eqn:tbhTauZero}
    \tbh(\iota(\tau_0), a)\big|_{\su_4 = 0} = \bh(\htau, a) \cdot \frac{(-1)^{a} a}{a!\su_1^a} \cdot \prod_{k = 1}^{a-1}(a\su_4 + k \su_1) \bigg|_{\su_4 = 0} = \bh(\htau, a) \cdot \frac{(-1)^{a}}{\su_1}.
\end{equation}
Therefore, the total power of $\su_4$ in $\tC_{\vGa'}$ is
$$
    -|E \setminus \{e_0(\vGa')\}| + \sum_{v \in V(\Gamma)\setminus \{\hv_0\}} (\val(v) -1) = 1 - |E(\Gamma)| + \sum_{v \in V(\Gamma)} (\val(v) -1),
$$
which is zero since the underlying graph $(V(\Gamma), E(\Gamma))$ of $\vGa$ (and $\vGa'$) is a tree.

Note by our integration convention \eqref{eqn:UnstableIntegral} that at $\hv_0$,
$$
    \int_{\Mbar_{0, 1}} \frac{1}{\frac{\tbw(e_0,\hv_0)}{d} - \psi_1} = -\frac{\su_1}{d}.
$$
We then consider
\begin{equation}\label{eqn:VertexIntegralProduct}
    \prod_{v \in V(\Gamma) \setminus \{\hv_0\}} \int_{\Mbar_{0, E_v}} \frac{1}{\prod_{e \in E_v}(\frac{\tbw(\ttau_e,\tsi_v)}{d_e} - \psi_{(e,v)})}.
\end{equation}
For any $(e,v) \in F(\Gamma)$, we have $(\ttau_e,\tsi_v) = (\iota(\htau_e), \iota(\hsi_v))$. Thus by \eqref{eqn:TanWtsEqual}, $\tbw(\ttau_e,\tsi_v) \neq \pm \su_4$. Then, \eqref{eqn:PsiIntegral} implies that $\su_4$ is not a factor of the denominator of \eqref{eqn:VertexIntegralProduct}. Moreover, if $\su_4$ is a factor of the numerator of \eqref{eqn:VertexIntegralProduct}, this means that $\tC_{\vGa'} \big|_{\su_4 = 0} = 0$ and
$$
    \prod_{v \in V(\Gamma) \setminus \{\hv_0\}} \int_{\Mbar_{0, E_v}} \frac{1}{\prod_{e \in E_v}(\frac{\bw(\htau_e,\hsi_v)}{d_e} - \psi_{(e,v)})} = 0,
$$
which implies that $\hC_{\vGa} = 0$. Thus we assume without loss of generality that $\su_4$ is not a factor of the numerator of \eqref{eqn:VertexIntegralProduct}.

Therefore, the total power of $\su_4$ in $\tC_{\vGa'}$ is zero. We then compute that
$$
    \frac{\tC_{\vGa'}}{\hC_{\vGa}} \bigg|_{\su_4 = 0} = \frac{\tbh(\iota(\tau_0),d)}{\bh(\tau_0,d)} \cdot \int_{\Mbar_{0, 1}} \frac{1}{\frac{\tbw(e_0,\hv_0)}{d} - \psi_1} \bigg|_{\su_4 = 0} = \frac{(-1)^{d}}{\su_1} \cdot \frac{-\su_1}{d} = \frac{(-1)^{d+1}}{d}.
$$
\end{proof}

\begin{lemma}\label{lem:RelativeLocalValLarge}
Let $\tbeta$ be as in Theorem \ref{thm:RelativeLocal}. Then for any $\vGa \in \Gamma_{0,0}(\tX, \tbeta) \setminus \Gamma^0_{0,0}(\tX, \tbeta)$, we have
$$
    \tC_{\vGa} \big|_{\su_4 = 0, \su_2 - f\su_1 = 0} = 0.
$$
In particular, 
$$
    N^{\tX}_{\tbeta,0} = \sum_{\vGa \in \Gamma^0_{0,0}(\tX, \tbeta)} \tC_{\vGa} \big|_{\su_4 = 0, \su_2 - f\su_1 = 0}.
$$
\end{lemma}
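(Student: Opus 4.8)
The plan is to run a power‑counting argument in the two variables $\su_4$ and $\su_2-f\su_1$ on the contribution $\tC_{\vGa}$, in the same spirit as the proofs of Lemmas~\ref{lem:OpenRelativePositive} and~\ref{lem:RelativeLocalVal1}. First I would isolate the relevant combinatorics of a graph $\vGa\in\Gamma_{0,0}(\tX,\tbeta)\setminus\Gamma^0_{0,0}(\tX,\tbeta)$. Since $\iota(\hsi_0)$ is the unique $4$-cone of $\tSi$ containing $\trho_{R+1}$, the cone $\iota(\tau_0)$ is the unique element $\ttau$ of $\tSi(3)_c$ with $l_{\ttau}\cap\tD\neq\emptyset$, and $l_{\iota(\tau_0)}\cdot\tD=1$; hence the edges of $\vGa$ labelled $\iota(\tau_0)$ have degrees summing to $\tbeta\cdot\tD=d$. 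Write $\ell:=|\vf^{-1}(\iota(\tau_0))|$, so $\ell\ge 1$ always and $\ell\ge 2$ exactly when $\vGa\notin\Gamma^0_{0,0}(\tX,\tbeta)$. Because $\iota(\tau_0)$ is a facet of precisely the two maximal cones $\iota(\sigma_0)$ and $\iota(\hsi_0)$, and is the only compact facet of $\iota(\hsi_0)$, each $\iota(\tau_0)$-edge joins an $\iota(\sigma_0)$-vertex to an $\iota(\hsi_0)$-vertex, and \emph{every} edge incident to an $\iota(\hsi_0)$-vertex is labelled $\iota(\tau_0)$. Let $w_1,\dots,w_k$ (with $k\ge 1$) be the $\iota(\hsi_0)$-vertices, of valences $\ell_1,\dots,\ell_k$ with $\sum_j\ell_j=\ell$.

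Next I would compute the order of $\tC_{\vGa}$ in $\su_4$, using exactly the inputs from the proof of Lemma~\ref{lem:RelativeLocalVal1}: by~\eqref{eqn:tbhTau} and~\eqref{eqn:tbhTauZero} the factor $\tbh(\iota(\htau),a)$ has $\su_4$-order $-1$ for $\htau\neq\tau_0$ and $0$ for $\htau=\tau_0$; by~\eqref{eqn:tbwSigma} the factor $\tbw(\iota(\hsi))$ has $\su_4$-order $1$ for $\hsi\neq\hsi_0$, whereas~\eqref{eqn:tSigma0Wts} gives $\tbw(\iota(\hsi_0))=(-\su_1)(\su_2-f\su_1)\bigl(-(\su_2-f\su_1)-\su_4\bigr)(\su_1+\su_4)$, which has $\su_4$-order $0$ with $\tbw(\iota(\hsi_0))\big|_{\su_4=0}=\su_1^2(\su_2-f\su_1)^2$; and by~\eqref{eqn:PsiIntegral} each $\Mbar_{0,E_v}$-integral is regular and nonzero at $\su_4=0$ (its flag weights being $\su_4$-regular and, for generic $f$, nonzero there). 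Summing orders and using $\sum_v(\val(v)-1)=|E(\Gamma)|-1$ for the tree $\Gamma$, I expect $\tC_{\vGa}$ to have $\su_4$-order exactly $k-1\ge 0$.

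If $k\ge 2$ the $\su_4$-order is $\ge 1$, so $\tC_{\vGa}\big|_{\su_4=0}=0$ already. If $k=1$ then $\ell_1=\ell\ge 2$ and the $\su_4$-order is $0$; I would set $\su_4=0$ via the limits~\eqref{eqn:tbhTau},~\eqref{eqn:tbhTauZero},~\eqref{eqn:tbwSigma} as in Lemma~\ref{lem:RelativeLocalVal1}, and then count the order in $\su_2-f\su_1$ of the result: the prefactor contributes $+1$; the $\ell$ factors $\bh(\tau_0,d_e)$ from the $\iota(\tau_0)$-edges each contribute $-1$ (a simple pole with nonzero residue, by~\eqref{eqn:bhTauZeroPositive}--\eqref{eqn:bhTauZeroNegative}); the vertex factor $\tbw(\iota(\hsi_0))^{\ell-1}\big|_{\su_4=0}=\bigl(\su_1^2(\su_2-f\su_1)^2\bigr)^{\ell-1}$ contributes $+2(\ell-1)$; and for generic $f$ every remaining factor (the other $\bh(\htau_e,d_e)$, the weights $\bw(\hsi_v)$, the $\Mbar_{0,E_v}$-integrals via~\eqref{eqn:PsiIntegral}, and the $\Mbar_{0,E_{w_1}}$-integral, which involves only $\su_1$) is a unit at $\su_2=f\su_1$. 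The total order is $1-\ell+2(\ell-1)=\ell-1\ge 1$, so $\tC_{\vGa}\big|_{\su_4=0,\ \su_2-f\su_1=0}=0$. The displayed formula for $N^{\tX}_{\tbeta,0}$ then follows from~\eqref{eqn:ClosedLocalResult0} in Proposition~\ref{prop:ClosedLocalResult}.

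The one non‑formal point — and the main obstacle — is the special degeneration behaviour of the weight $\tbw(\iota(\hsi_0))$: unlike the other vertex weights it does not vanish at $\su_4=0$, and its value there carries the factor $(\su_2-f\su_1)^2$. It is precisely this factor, raised to the power $\ell-1$ at an $\iota(\hsi_0)$-vertex of valence $\ell\ge 2$, that dominates the $\ell$ simple poles produced by the multiple‑cover factors $\bh(\tau_0,d_e)$ of $l_{\iota(\tau_0)}$ and forces the vanishing. Care should be taken that the count is consistent with the boundary case $\ell=1$ (graphs in $\Gamma^0_{0,0}(\tX,\tbeta)$), where the $\su_2-f\su_1$-order is $0$ and no cancellation occurs, in agreement with Lemma~\ref{lem:RelativeLocalVal1}, and with the genericity of $f$ (Assumption~\ref{assump:GenFraming}) so that no flag weight of $\vGa$ equals $\pm(\su_2-f\su_1)$.
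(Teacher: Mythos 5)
Your proposal is correct and follows essentially the same two-stage power-counting argument as the paper: first showing the $\su_4$-order of $\tC_{\vGa}$ is $|\vf^{-1}(\iota(\hsi_0))|-1$ (your $k-1$) via the tree identity, then in the remaining case $k=1$, $\ell\ge 2$ showing the $\su_2-f\su_1$-order is $1-\ell+2(\ell-1)=\ell-1\ge 1$ using the simple poles of $\bh(\tau_0,d_e)$ against the factor $(\su_2-f\su_1)^{2(\ell-1)}$ from $\tbw(\iota(\hsi_0))^{\ell-1}\big|_{\su_4=0}$. The only cosmetic difference is that your claim that the orders are \emph{exact} (the vertex integrals being ``nonzero'' at the specializations) is stronger than needed and not quite what the paper asserts, but since only the lower bound on the vanishing order is required for this lemma, this does not affect the argument.
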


\begin{proof}
Let $\vGa \in \Gamma_{0,0}(\tX, \tbeta) \setminus \Gamma^0_{0,0}(\tX, \tbeta)$. We first determine the power of $\su_4$ in $\tC_{\vGa}$. As argued in the proof of Lemma \ref{lem:RelativeLocalVal1} above, $\su_4$ is not a factor of the denominator of
$$
    \prod_{v \in V(\Gamma)} \int_{\Mbar_{0, E_v}} \frac{1}{\prod_{e \in E_v}(\frac{\tbw(\ttau_e,\tsi_v)}{d_e} - \psi_{(e,v)})}.
$$
Focusing on the term
$$
    \prod_{e \in E(\Gamma)} \tbh(\ttau_e, d_e) \prod_{v \in V(\Gamma)} \tbw(\tsi_v)^{\val(v)-1},
$$
we find by \eqref{eqn:tbwSigma}, \eqref{eqn:tbhTau}, \eqref{eqn:tbhTauZero} that the total power of $\su_4$ is
\begin{equation}\label{eqn:u4Power}
    -|E(\Gamma) \setminus \vf^{-1}(\iota(\tau_0))| + \sum_{v \in V(\Gamma) \setminus \vf^{-1}(\iota(\hsi_0))} (\val(v) - 1).
\end{equation}
Since the underlying graph $(V(\Gamma), E(\Gamma))$ of $\vGa$ is a tree, we have
$$
    1 - |E(\Gamma)| + \sum_{v \in V(\Gamma)} (\val(v) -1) = 0.
$$
This implies that
$$
    -|E(\Gamma) \setminus \vf^{-1}(\iota(\tau_0))| + \sum_{v \in V(\Gamma) \setminus \vf^{-1}(\iota(\hsi_0))} (\val(v) - 1) =  -1 + |\vf^{-1}(\iota(\tau_0))| - \sum_{v \in \vf^{-1}(\iota(\hsi_0))} (\val(v)-1) = |\vf^{-1}(\iota(\hsi_0))| -1.
$$
Therefore, if $|\vf^{-1}(\iota(\hsi_0))| \ge 2$, then the power of $\su_4$ in $\tC_{\vGa}$ is strictly positive, which implies that $\tC_{\vGa} \big|_{\su_4 = 0} = 0$.

It remains to consider the case where the power of $\su_4$ in $\tC_{\vGa}$ is zero, which forces $|\vf^{-1}(\iota(\hsi_0))| = 1$. Let $\hv_0$ denote the unique vertex in $V(\Gamma)$ with label $\iota(\hsi_0)$. Note that $E_{\hv_0} = \vf^{-1}(\iota(\tau_0))$, which has size at least 2. We now find the power of $\su_2 - f\su_1$ in $\tC_{\vGa} \big|_{\su_4 = 0}$. 

Recall that $f$ is generic (Assumption \ref{assump:GenFraming}). By \eqref{eqn:TanWtsEqual}, $\tbw(\ttau_e,\tsi_v) \neq \pm (\su_2-f\su_1)$ for any $(\ttau_e,\tsi_v) \in F(\Gamma)$. Then, \eqref{eqn:PsiIntegral} implies that $\su_2-f\su_1$ is not a factor of the denominator of
$$
    \prod_{v \in V(\Gamma)} \int_{\Mbar_{0, E_v}} \frac{1}{\prod_{e \in E_v}(\frac{\tbw(\ttau_e,\tsi_v)}{d_e} - \psi_{(e,v)})} \bigg|_{\su_4 = 0}.
$$
We then focus on the term
$$
    (\su_2-f\su_1) \prod_{e \in E(\Gamma)} \tbh(\ttau_e, d_e) \prod_{v \in V(\Gamma)} \tbw(\tsi_v)^{\val(v)-1} \bigg|_{\su_4 = 0}.
$$
By \eqref{eqn:tSigma0Wts}, we have
$$
    \tbw(\hsi_0) \big|_{\su_4 = 0} =  \su_1^2(\su_2 - f\su_1)^2.
$$
Combining this with \eqref{eqn:bhTauZeroPositive}, \eqref{eqn:bhTauZeroNegative}, \eqref{eqn:tbwSigma}, \eqref{eqn:tbhTau}, \eqref{eqn:tbhTauZero}, we find that the total power of $\su_2 - f\su_1$ is
$$
    1 - |\vf^{-1}(\iota(\tau_0))| + 2(\val(v_0) - 1) = |\vf^{-1}(\iota(\tau_0))| - 1 \geq 1.
$$
Therefore, the power of $\su_2-f\su_1$ in $\tC_{\vGa} \big|_{\su_4 = 0}$ is strictly positive, which implies that $\tC_{\vGa} \big|_{\su_4 = 0, \su_2 - f\su_1 = 0} = 0$.
\end{proof}

Finally, we prove Lemma \ref{lem:DivisorAxiom} which relates the $0$-pointed and $1$-pointed invariants:
\begin{proof}[Proof of Lemma \ref{lem:DivisorAxiom}]
We use the localization computation \eqref{eqn:ClosedLocalResult1} of $N^{\tX}_{\tbeta,1}$ in Proposition \ref{prop:ClosedLocalResult}. Let $\vGa \in \Gamma_{0,1}(\tX, \tbeta)$, and let $\hv_0 = \vs(1) \in V(\Gamma)$. Since $i_{\tsi}^*([\tD]) = 0$ unless $\tsi = \iota(\hsi_0)$, $\vGa$ can contribute to $N^{\tX}_{\tbeta,1}$ only if $\vf(\hv_0) = \iota(\hsi_0)$, which we now assume. Then, since $\vf(e) = \iota(\htau_0)$ for any $e \in E_{\hv_0}$, we have $\bw(\ttau_e, \tsi_{\hv_0}) = -\su_1$. This implies that the integral
$$
    \int_{\Mbar_{0, E_{\hv_0} \cup S_{\hv_0}}} \frac{1}{\prod_{e \in E_{\hv_0}}(\frac{\tbw(\ttau_e,\tsi_{\hv_0})}{d_e} - \psi_{(e,\hv_0})}
$$
is a rational function in $\su_1$ and thus does not have any poles in $\su_4$ or $\su_2 - f\su_1$. By the same argument as in the proof of Lemma \ref{lem:RelativeLocalValLarge}, $\vGa$ can contribute to $N^{\tX}_{\tbeta,1}$ only if it belongs to the subset
$$
    \Gamma^0_{0,1}(\tX, \tbeta):= \{\vGa \in \Gamma_{0,1}(\tX, \tbeta): |\vf^{-1}(\iota(\tau_0))| =1\}.
$$

Note that there is a natural bijection between $\Gamma^0_{0,1}(\tX, \tbeta)$ and $\Gamma^0_{0,0}(\tX, \tbeta)$ given by removing/adding the marked point. Let $\vGa' \in \Gamma^0_{0,0}(\tX, \tbeta)$ be obtained from $\vGa$ by removing the marked point. Note that $\Aut(\vGa') = \Aut(\vGa)$. Let $e_0 \in E(\Gamma)$ be the unique edge in $E_{\hv_0}$. We have $\vd(e_0) = d$. It suffices to show that the contribution of $\vGa$ to $N^{\tX}_{\tbeta,1}$ as in \eqref{eqn:ClosedLocalResult1} before the weight restricitons, which is
\begin{align*}
    & \frac{\su_2-f\su_1}{|\Aut(\vGa)|} \prod_{e \in E(\Gamma)} \frac{\tbh(\ttau_e, d_e)}{d_e}
        \cdot \prod_{v \in V(\Gamma)} \int_{\Mbar_{0, E_v \cup S_v}} \frac{\tbw(\tsi_v)^{\val(v)-1}i_{\tsi_v}^*([\tD])^{n_v}}{\prod_{e \in E_v}(\frac{\tbw(\ttau_e,\tsi_v)}{d_e} - \psi_{(e,v)})}\\
    = & \frac{-\su_1(\su_2-f\su_1)}{|\Aut(\vGa)|} \prod_{e \in E(\Gamma)} \frac{\tbh(\ttau_e, d_e)}{d_e}
        \cdot \int_{\Mbar_{0, \{e_0\} \cup \{1\}}}\frac{1}{-\frac{\su_1}{d} - \psi_{(e_0,\hv_0)}} 
        \cdot \prod_{v \in V(\Gamma)\setminus \{\hv_0\}} \int_{\Mbar_{0, E_v}} \frac{\tbw(\tsi_v)^{\val(v)-1}}{\prod_{e \in E_v}(\frac{\tbw(\ttau_e,\tsi_v)}{d_e} - \psi_{(e,v)})},
\end{align*}
is $d$ times the contribution $\tC_{\vGa'}$ of $\vGa'$ to $N^{\tX}_{\tbeta,0}$ before the weight restricitons. This is true since we have
$$
    -\su_1 \cdot \int_{\Mbar_{0, \{e_0\} \cup \{1\}}}\frac{1}{-\frac{\su_1}{d} - \psi_{(e_0,\hv_0)}} = -\su_1, \qquad \int_{\Mbar_{0, \{e_0\}}}\frac{1}{-\frac{\su_1}{d} - \psi_{(e_0,\hv_0)}} = -\frac{\su_1}{d}
$$
by \eqref{eqn:UnstableIntegral}. Here, we abuse notation and use $\hv_0$ (resp. $e_0$) to also refer to $\hv_0(\vGa')$ (resp. $e_0(\vGa')$).
\end{proof}

\begin{remark}\label{rem:Degeneration}\rm{
Our Theorem \ref{thm:RelativeLocal} can be viewed as an instantiation of the log-local principle of van Garrel-Graber-Ruddat \cite{vGGR19} in the non-compact setting, due to the correspondence between relative and log Gromov-Witten invariants \cite{AMW14}. Similar to \cite{vGGR19}, our numerical correspondence in Theorem \ref{thm:RelativeLocal} can be derived from a cycle-level correspondence proven by degeneration; for this, we may use the degeneration formula of Li \cite{Li02} in relative Gromov-Witten theory, or its variant in log Gromov-Witten theory by Kim-Lho-Ruddat \cite{KLR18}, and equivariant intersection theory \cite{EG98}.
}\end{remark}

\subsection{Open/closed correspondence}\label{sect:OpenClosed}
Our open/relative correspondence (Theorem \ref{thm:OpenRelative}) and relative/local correspondence (Theorem \ref{thm:RelativeLocal}) together imply the following identification of the disk invariants of $(X,L,f)$ and the genus-zero closed Gromov-Witten invariants of $\tX$, proving the \emph{open/closed correspondence} conjectured by \cite{LM01, Mayr01}.

\begin{theorem}[Open/closed correspondence]\label{thm:OpenClosed}
Let $\beta' \in H_2(X;\bZ)$ be an effective class of $X$, $d \in \bZ_{>0}$, and $\tbeta \in H_2(\tX;\bZ)$ be the effective class of $\tX$ corresponding to $\beta' + d[B]$ under the isomorphism $H_2(X,L;\bZ) \cong H_2(\tX;\bZ)$. Then
$$
    N^{X, L, f}_{\beta', d} = dN^{\tX}_{\tbeta,0} = N^{\tX}_{\tbeta,1}.
$$
\end{theorem}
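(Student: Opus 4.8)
The plan is to derive Theorem \ref{thm:OpenClosed} by simply composing the open/relative correspondence (Theorem \ref{thm:OpenRelative}) with the relative/local correspondence (Theorem \ref{thm:RelativeLocal}), keeping track of the three second-homology identifications of Section \ref{sect:Homology}. First I would fix an effective class $\beta' \in H_2(X;\bZ)$ and $d \in \bZ_{>0}$, set $\beta := \beta' + d[B] \in H_2(X,L;\bZ)$, and let $\hbeta \in H_2(Y;\bZ)$ and $\tbeta \in H_2(\tX;\bZ)$ be the classes corresponding to $\beta$ under
$$
H_2(X,L;\bZ) \cong H_2(Y;\bZ) \cong H_2(\tX;\bZ).
$$
The compatibilities to record are: these isomorphisms send $[B] \mapsto [l_{\tau_0}] \mapsto [l_{\iota(\tau_0)}]$ and identify effective curve classes, and intersection with the respective added divisors is preserved, so that $\hbeta \cdot D = \tbeta \cdot \tD = d > 0$ (using $l_{\tau_0}\cdot D = 1$). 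Hence Theorems \ref{thm:OpenRelative} and \ref{thm:RelativeLocal} apply to $\hbeta$ and $\tbeta$ with the same integer $d$.

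Then I would chain the two identities. By Theorem \ref{thm:OpenRelative},
$$
N^{X,L,f}_{\beta',d} = (-1)^{d+1} N^{Y,D}_{\hbeta},
$$
and by Theorem \ref{thm:RelativeLocal},
$$
N^{Y,D}_{\hbeta} = (-1)^{d+1} d\, N^{\tX}_{\tbeta,0} = (-1)^{d+1} N^{\tX}_{\tbeta,1}.
$$
Substituting, the signs $(-1)^{d+1}\cdot(-1)^{d+1} = (-1)^{2(d+1)} = 1$ cancel, yielding $N^{X,L,f}_{\beta',d} = d\, N^{\tX}_{\tbeta,0} = N^{\tX}_{\tbeta,1}$, which is the assertion. (The internal equality $N^{\tX}_{\tbeta,1} = d\, N^{\tX}_{\tbeta,0}$ is also Lemma \ref{lem:DivisorAxiom}, so it can be invoked directly if one prefers to prove the two halves separately.)

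Since both ingredient theorems have already been established in Sections \ref{sect:OpenRel} and \ref{sect:RelLocal} (the open/relative correspondence via Lemmas \ref{lem:OpenRelativeZero} and \ref{lem:OpenRelativePositive}, the relative/local correspondence via Lemmas \ref{lem:RelativeLocalVal1}, \ref{lem:RelativeLocalValLarge}, \ref{lem:DivisorAxiom}), there is no substantive obstacle remaining at this point; the only care needed is the bookkeeping of the composite homology identification and verifying that the factors $(-1)^{d+1}$ occurring in each intermediate correspondence multiply to $1$, so that the final statement is sign-free. I would also note that the result can be phrased intrinsically using only $H_2(X,L;\bZ) \cong H_2(\tX;\bZ)$, as in Theorem \ref{thm:OpenClosedIntro}, without reference to the intermediate space $Y$.
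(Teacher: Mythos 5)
Your proposal is correct and matches the paper's own derivation exactly: Theorem \ref{thm:OpenClosed} is obtained by composing Theorem \ref{thm:OpenRelative} with Theorem \ref{thm:RelativeLocal} under the homology identifications of Section \ref{sect:Homology}, with the two factors of $(-1)^{d+1}$ cancelling. Nothing further is needed.
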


\end{document}